\numberwithin{equation}{section}
\newtheorem{thm}{Theorem}[section]
\newtheorem{theorem}[thm]{Theorem}
\newtheorem{lemma}[thm]{Lemma}
\newtheorem{prop}[thm]{Proposition}
\theoremstyle{definition}
\newtheorem{definition}[thm]{Definition}
\newtheorem{assumption}[thm]{Assumption}
\theoremstyle{remark}
\newtheorem{remark}[thm]{\bf{Remark}}
\newcommand\aint{-\hspace{-0.38cm}\int}
\newcommand\bH{\mathbb{H}}
\newcommand\bL{\mathbb{L}}
\newcommand\bM{\mathbb{M}}
\newcommand\bN{\mathbb{N}}
\newcommand\bR{\mathbb{R}}
\newcommand\cA{\mathcal{A}}
\newcommand\cB{\mathcal{B}}
\newcommand\cC{\mathcal{C}}
\newcommand\cD{\mathcal{D}}
\newcommand\cH{\mathcal{H}}
\newcommand\cL{\mathcal{L}}
\newcommand\cP{\mathcal{P}}
\newcommand\sL{\mathscr{L}}
\newcommand{\mysection}[1]{\section{#1}}
\begin{document}

\title[Degenerate linear equations]{Sobolev estimates for parabolic and elliptic equations in divergence form with degenerate coefficients}

\author{Hongjie Dong$^{1}$}
\address{$^1$ Division of Applied Mathematics, Brown University, 182 George Street, Providence, RI 02912, USA}
\email{Hongjie\_Dong@brown.edu}
\thanks{H. Dong was partially supported by the NSF under agreement DMS2350129.}

\author{Junhee Ryu$^{2}$}
\address{$^2$ School of Mathematics, Korea Institute for Advanced Study, 85 Hoegi-ro, Dongdaemun-gu, Seoul, 02455, Republic of Korea}
\email{junhryu@kias.re.kr}
\thanks{J. Ryu was supported by a KIAS Individual Grant (MG101501) at Korea Institute for Advanced Study}

\subjclass[2020]{35J70, 35K65, 35D30, 35R05}

\keywords{Degenerate linear equations, divergence form, existence and uniqueness, weighted Sobolev spaces}

\begin{abstract}
We study a class of degenerate parabolic and elliptic equations in divergence form in the upper half space $\{x_d>0\}$. The leading coefficients are of the form $x_d^2a_{ij}$, where $a_{ij}$ are bounded, uniformly elliptic, and measurable in $(t,x_d)$ except $a_{dd}$, which is measurable in $t$ or $x_d$. Additionally, they have small bounded mean oscillations in the other spatial variables. We obtain the well-posedness and regularity of solutions in weighted mixed-norm Sobolev spaces.
\end{abstract}

\maketitle

\section{Introduction}

In this paper, we study the existence, uniqueness, and regularity of solutions in weighted mixed-norm Sobolev spaces to a class of parabolic and elliptic equations in the upper half space. The leading coefficients are the product of $x_d^2$ and uniformly nondegenerate bounded measurable matrix-valued functions, which are degenerate when $x_d\to 0^+$ and singular when $x_d\to \infty$. 

Throughout this paper, we always assume that a matrix of coefficients $(a_{ij})$ is measurable and satisfies the following ellipticity condition and boundedness condition
\begin{equation} \label{ellip}
  \nu|\xi|^2\leq a_{ij}\xi_i\xi_j, \quad |a_{ij}|\leq \nu^{-1},
\end{equation}
and $a_0,(b_i),(\hat{b}_i), c$, and $c_0$ are measurable and satisfy
\begin{equation}
						\label{eq1.2}
  |b_i|,|\hat{b}_i|,|c| \leq K
\end{equation}
and
\begin{equation} \label{eq7151434}
  K^{-1}\leq a_0,c_0 \leq K.
\end{equation}
Let $\sL_p$ be the second-order linear
parabolic operator with degenerate coefficients defined by
\begin{equation*}
  \sL_p u:= a_0 u_t - x_d^2 D_i(a_{ij}D_{j}u) + x_d b_iD_iu + x_dD_i(\hat{b}_iu) + cu.
\end{equation*}

For $T\in(-\infty,\infty]$, $d\in\bN$, and $\bR^{d}_+ := \{(x_1,\dots,x_d)\in\bR^d : x_d>0\}$, we investigate degenerate parabolic equations in divergence form:
\begin{equation} \label{maindiv}
  \sL_p u +\lambda c_0 u =D_iF_i+f
\end{equation}
in $\Omega_T := (-\infty,T)\times\bR^d_+$, as well as the corresponding Cauchy problems in $\Omega_{0,T}:=(0,T)\times \bR^d_+$. Here, $\lambda\geq0$, and $F$ and $f$ are given measurable forcing terms.
We also consider the following elliptic equations:
\begin{equation} \label{ellmaindiv}
  \sL_e u +\lambda c_0 u =D_iF_i + f
\end{equation}
in $\bR^d_+$,  where $\sL_e$ is the elliptic operator defined by
\begin{equation*}
  \sL_e u:= -x_d^2 D_i(a_{ij}D_{j}u) + x_d b_iD_iu + x_dD_i(\hat{b}_iu) +cu.
\end{equation*}
We note that in the first-order terms, $x_d$ is placed in front of $D_i$. Thus, the equations \eqref{maindiv} and \eqref{ellmaindiv} are invariant under the scaling $(t,x)\to (t,sx)$ and $x\to sx$, respectively, for any $s>0$.

 Equations \eqref{maindiv} and \eqref{ellmaindiv} appear in various problems. When $d=1$, a notable example of the parabolic equation is the Black-Scholes-Merton equation
\begin{equation} \label{eq5090018}
    u_t + \frac{1}{2}\sigma^2x^2D_{xx}u + rxD_xu -ru=0 \, \text{ in } \, \Omega_{0,T},
\end{equation}
where $\sigma,r>0$. Here, we emphasize that in the literature, the equation is typically considered in the reverse time direction with a terminal condition $u(T,x)=h(x)$ in place of an initial one.
Regarding \eqref{ellmaindiv}, if $d=1$, then the equation is the Euler equation, one of the well-known ordinary differential equations,
\begin{equation} \label{eq5091531}
    -ax^2D_{xx}u + xbD_xu + cu =f \, \text{ in } \, \bR_+,
\end{equation}
where $a,b,c \in \bR$ are constants.
For multi-dimensional case, the equations are useful and important in various problems (see, for instance, \cite{HX24,V89} and the references therein). For example, in the introduction of \cite{HX24}, it is noted that \eqref{ellmaindiv} appears in the linearization of the Loewner-Nirenberg problem
\begin{equation*}
    \Delta u=\frac{d(d-2)}{4}u^{\frac{d+2}{d-2}} \, \text{ in } \, \bR^d,
\end{equation*}
which is nonlinear and degenerate.
One further motivation to study our equations comes from degenerate viscous Hamilton-Jacobi equations, where a model equation is given by
\begin{equation*}
    u_t(t,x)-x_d^{\alpha}\Delta u(t,x) +\lambda u(t,x) + H(t,x,D_xu)=0 \text{ in } \Omega_T.
\end{equation*}
Here, $\alpha>0$, and $H:\Omega_T\times \bR^d\to\bR$ is a given smooth Hamiltonian. When $H=0$ and $\alpha=2$, this equation is a special case of \eqref{maindiv}. We refer the reader to \cite{DPT24} for more information.

The solution spaces of interest are the weighted Sobolev spaces $H_{p,\theta}^1$, defined as
\begin{equation*}
  H_{p,\theta}^1:=\{u: u,x_dD_xu \in L_{p}(x_d^{\theta-1}dx)\}.
\end{equation*}
For the parabolic equation \eqref{maindiv}, we present weighted mixed-norm spaces; we consider the Muckenhoupt class of weights in time. See Section \ref{sec_function} for the definitions of these spaces. Such spaces were introduced in \cite[Section 2.6.3]{ML68} for $p=2$ and $\theta=1$, and they were generalized in a unified manner for $p\in(1,\infty)$, $\theta\in\bR$, and fractional derivatives in \cite{Khalf}. The necessity of these weighted spaces came from the theory of stochastic partial differential equations (SPDEs). See, for instance, \cite{K94,KLline,KLspace}. After the work of \cite{Khalf}, second-order nondegenerate equations have been extensively studied in the weighted Sobolev spaces. We refer the reader to \cite{KK2004,KL13,KN09,DK15,S24}.

It is worth noting that no boundary condition is imposed in this paper.  While boundary conditions can be addressed with additional information about the forcing terms or the structure of the equations (see, for example, \cite[Remark 4.7]{V89}), we focus on solving the equations under minimal assumptions.
Additionally, boundary data of functions in $H_{p,\theta}^1$ can be analyzed by imposing further constraints. For instance, in \cite{D08}, it was shown that when $\theta\in(-p,0)$, $u\in H_{p,\theta}^1$, and $x_du\in L_p(x_d^{\theta-1}dx)$, the trace of $u$ exists and is zero. We also refer the reader to \cite{DP20,DP23JFA}, where the authors studied the elliptic equations with the prototype
\begin{equation*}
  x_d^2\Delta u+\alpha x_d D_du-\lambda x_d^2u=f
\end{equation*}
with certain boundary conditions. Due to the presence of the term $\lambda x_d^2u$ instead of $\lambda u$ in these papers, boundary conditions were imposed, and the elliptic problems were solved in different function spaces.

The purpose of this paper is to obtain maximal regularity results for solutions in $H_{p,\theta}^1$. For example, for the elliptic problem \eqref{ellmaindiv}, we prove
\begin{equation} \label{eq11221300}
  \int_{\bR^d_+} \left(|(1+\sqrt{\lambda})u|^p + |x_dD_xu|^p\right)x_d^{\theta-1} dx \leq N \int_{\bR^d_+} \left(|x_d^{-1}F|^p + \left|\frac{f}{1+\sqrt{\lambda}}\right|^p\right) x_d^{\theta-1} dx,
\end{equation}
under two distinct cases for $\theta$ and $\lambda$. First, for any $\theta\in\bR$, we require the condition $\lambda\geq\lambda_0$ where $\lambda_0\geq0$ is sufficiently large. Second, when $\lambda=0$, the range of $\theta$ becomes restricted. In this case, the lower-order coefficients $(b_i),(\hat{b}_i)$, and $c$ are ``effective'' in the sense that the range of $\theta$ depends also on the following ratios of coefficients:
\begin{equation*}
  \frac{b_d}{a_{dd}}=n_b, \quad \frac{\hat{b}_{d}}{a_{dd}}=n_{\hat{b}}, \quad \frac{c}{a_{dd}}=n_c.
\end{equation*}
We note that the range of $\theta$ is optimal in the sense that it is a necessary and sufficient condition for the solvability. We will demonstrate this in our future work, where the corresponding non-divergence equations are studied.
We also remark that the two zeroth-order terms, $cu$ and $\lambda c_0 u$, are introduced and play distinct roles in the analysis.

In this paper, the leading coefficients $a_{ij}$ are assumed to be measurable in $(t,x_d)$ except $a_{dd}$, which is assumed to be measurable in $t$ or $x_d$. Additionally, they have small bounded mean oscillations (BMO) in the remaining spatial directions. 
This setup is motivated by the classical (nondegenerate) heat equation, where this class of the leading coefficients is optimal in the sense that the unique $L_p$ solvability fails if $a_{dd}$ is measurable in both $(t,x_d)$. See \cite{K16} for a counterexample and \cite{D20,DK18} and the references therein for the solvability of nondegenerate equations.

Let us give the main ideas and organization of this paper. In Section \ref{sec_main}, we introduce the functions spaces, assumptions and our main results.
In Section \ref{sec_apriori}, we provide the a priori estimates in unmixed-norm spaces when the coefficients are simple. The notions of simple coefficients are introduced in Assumptions \ref{ass_simple1} and \ref{ass_simple2}. The proof is divided into zeroth-order and higher-order estimates. For zeroth-order estimates, we test the equation with a suitable test function and use weighted Hardy's inequality. In the case when the coefficients are measurable in $x_d$, a crucial step of our proof is to transform the equation appropriately. For higher-order estimates, we apply a localization argument from \cite{Khalf}.
In Section \ref{sec_simple}, we first prove the existence of solutions in $L_p((-\infty,T);H_{p,\theta}^1)$. Then we use a level set argument to derive weighted mixed-norm estimates for simple coefficients.
The proofs for the parabolic equations are given in Section \ref{sec_bmo}. The main approach begins by handling compactly supported solutions using an embedding and an interpolation argument. A partition of unity with weight is then applied to obtain the desired estimates.
The elliptic case is addressed in Section \ref{sec_ell}. Finally, in Section \ref{sec_ink}, we prove a ``crawling of ink spots'' lemma for our setting.

In the literature, there are numerous results on degenerate elliptic and parabolic equations.
In particular, when $d=1$, the solutions are well-known for special cases. For instance, for \eqref{eq5090018}, we have
\begin{equation*}
    u(t,x)=e^{-r(T-t)}\int_0^\infty h(y)p(t,x,y)dy,
\end{equation*}
where the transition density 
\begin{equation*}
    p(t,x,y):=\frac{1}{\sigma y\sqrt{2\pi(T-t)}}\exp{\left\{-\frac{1}{2(T-t)\sigma^2}\left[\log\left(\frac{y}{x}\right)-(r-\frac{1}{2}\sigma^2)(T-t)\right]^2\right\}}.
\end{equation*}
(see e.g. \cite[Section 16.6]{S04}). In the elliptic case, if the quadratic polynomial $az^2+(b+1)z-c=0$
has two distinct real roots $\alpha$ and $\beta$, then the general solution of \eqref{eq5091531} is
\begin{equation} \label{eq5091832}
  Ax^{-\alpha} + Bx^{-\beta},
\end{equation}
where $A,B\in\bR$. Using this, one can find an explicit representation formula of the solution (see \eqref{eq10101418}).

Although there are many results for more general cases and higher dimensional equations, we only give a review on regularity results.
 We first focus on $L_p$ regularity results relevant to \eqref{maindiv} and \eqref{ellmaindiv}.
The elliptic problem \eqref{ellmaindiv} with constant coefficients was considered in \cite{Khalf,MNS22,MNS24}, using different approaches: elementary analysis and probabilistic representation in \cite{Khalf}, and a semigroup approach in \cite{MNS22,MNS24}.
In \cite{K07}, the author studied the Cauchy problem \eqref{maindiv} where the leading coefficients are uniformly continuous. See also \cite{K08} for the corresponding result on SPDEs.
For equations with higher-order degeneracy in the form $x_d^\alpha\Delta$ with $\alpha\geq2$, we refer the reader to \cite{FMP12,V89}. In particular, for the case $\alpha=2$, the estimate \eqref{eq11221300} with $\theta=1$ was proved in \cite{FMP12}.
Compared to \cite{FMP12,K07,Klec,MNS22,V89}, we consider  a substantially larger class of coefficients in weighted mixed-norm spaces. 

Next, we describe H\"older regularity results for the following degenerate elliptic problem defined on bounded domains:
\begin{equation*}
  \rho^{2s}a_{ij}D_{ij}u+\rho^{s}b_iD_iu+cu=f,
\end{equation*}
where $s\geq1$ and $\rho$ is a regularized distance function. In \cite{V89}, the author studied this equation in weighted H\"older spaces. In \cite{GL91,HX24}, higher-order weighted regularity of solutions was obtained when $s=1$. We also remark that in \cite{V89, GL91}, the zero Dirichlet boundary condition was considered by imposing the additional assumption that $f$ vanishes on the boundary.

We also provide a brief review on equations involving operators with lower-order degeneracy, such as $x_d^\alpha\Delta$ with $\alpha<2$. In \cite{DPT23, DPT24}, the authors studied equations with the prototype
\begin{equation*}
  u_t-x_d^\alpha\Delta u +\lambda u=f
\end{equation*}
 under the zero Dirichlet boundary condition in weighted Sobolev spaces. We note that our operators can be viewed as the limiting case as $\alpha\to2$, but our results cannot be obtained by (formally) taking the limit in their results. There has also been extensive research on equations with first-order degeneracy, i.e., $\alpha=1$. In particular, the following equation
 \begin{equation*}
   u_t-x_d\Delta u -\beta D_du +\lambda u=f, \quad \lambda\geq0, \,\beta>0,
 \end{equation*}
 appears in the study of porous media equations and parabolic Heston equations.
  We refer the reader to \cite{FMPP07,Koch} for weighted Sobolev estimates and to \cite{DH98,FP13} for weighted H\"older estimates, respectively.

Lastly, a different class of parabolic equations with singular-degenerate coefficients was studied in a series of papers \cite{DP20,DP21TAMS, DP23,DP23JFA}. The authors showed the wellposedness and regularity estimates in weighted Sobolev spaces. In these papers, the weights of coefficients of $u_t$ and $D_x^2u$ appear in a balanced way, which plays a crucial role in the analysis and functional space settings.

We finish the introduction by summarizing the notation used in this paper.
 We use ``$:=$'' or ``$=:$'' to denote a definition.
For non-negative functions $f$ and $g$, we write $f\approx g$ if there exists a constant $N>0$ such that $N^{-1}f\leq g\leq Nf$.
  By $\bN$, we denote the natural number system. We denote $\bN_0:=\bN\cup\{0\}$. As usual, $\bR^d$ stands for the Euclidean space of points $x=(x_1,\dots,x_d)=(x',x_d)$. We also denote $B_r(x):=\{y\in\bR^d : |x-y|<r\}$ and write $\bR:=\bR^1$.
We use $D^n_x u$ to denote the partial derivatives of order $n\in\bN_0$ with respect to the space variables, and $D_xu:=D_x^1u$. We also denote
\begin{equation*}
  D_iu=\frac{\partial u}{\partial x_i}, \quad D_{ij}u=\frac{\partial^2 u}{\partial x_i \partial x_j}.
\end{equation*}

\section{Main results} \label{sec_main}

\subsection{Function spaces} \label{sec_function}

We first introduce function spaces which will be used in this paper. We denote by $L_{p,\theta}=L_{p,\theta}(\bR^d_+)$ the set of all measurable functions $u$ defined on $\bR^d_+$ satisfying
\begin{equation*}
  \|u\|_{L_{p,\theta}}:=\left( \int_{\bR^d_+} |u|^p x_d^{\theta-1} dx \right)^{1/p}<\infty.
\end{equation*}
Denote
\begin{equation*}
  H_{p,\theta}^1:=\{u: u, x_dD_xu \in L_{p,\theta}\}.
\end{equation*}
Here, the norm in $H_{p,\theta}^1$ is given by
\begin{equation} \label{eq4251017}
  \|u\|_{H_{p,\theta}^1} := \left( \sum_{i=0}^n \int_{\bR^d_+} \left( |u|^p + |x_dD_xu|^p\right) x_d^{\theta-1} dx \right)^{1/p}.
\end{equation}
By \cite[Corollary 2.3]{Khalf}, the norm \eqref{eq4251017} is equivalent to
\begin{equation} \label{equivnorm}
  \|u\|_{H_{p,\theta}^1} \approx \left(\sum_{m=-\infty}^\infty e^{m(\theta+d-1)} \|u(e^m\cdot)\zeta\|_{W_p^1(\bR^d)}^p\right)^{1/p},
\end{equation}
where $W_p^1(\bR^d)$ is the Sobolev space of order $1$ and $\zeta\in C_c^\infty(\bR_+)$ is a non-negative function such that
\begin{equation} \label{eq716908}
  \sum_{n=-\infty}^\infty \zeta^p(e^{x_d-n})\geq1.
\end{equation}
We also remark that $H_{p,\theta}^1$ in this paper is denoted by $H_{p,\theta+d-1}^1$ in \cite{Khalf}.

For $p,q\geq1$, a given weight $\omega=\omega(t)$ on $(-\infty,T)$, and functions defined on $\Omega_T$, we define
 \begin{equation*}
   \bL_{q,p,\theta,\omega}(T):=L_q((-\infty,T),\omega dt;L_{p,\theta}), \quad \bH_{q,p,\theta,\omega}^1(T):=L_q((-\infty,T), \omega dt;H_{p,\theta}^1).
 \end{equation*}
In the case when $p=q$ and $\omega=1$, we write $\bL_{p,\theta}(T):=\bL_{p,p,\theta,1}(T)$ and $\bH_{p,\theta}^1(T):=\bH_{p,p,\theta,1}^1(T)$. We also denote $\bL_{q,p,\theta}(T):=\bL_{q,p,\theta,1}(T)$, $\bH_{q,p,\theta}^1(T):=\bH_{q,p,\theta,1}^1(T)$, $\bL_{p,\theta,\omega}(T):=\bL_{p,p,\theta,\omega}(T)$ and $\bH_{p,\theta,\omega}^1(T):=\bH_{p,p,\theta,\omega}^1(T)$.
Following the argument in \cite[Theorem 1.19, Remark 5.5]{Khalf}, it can be seen that $C_c^\infty(\bR^d_+)$ and $C_c^\infty((-\infty,T)\times\bR^d_+)$ are dense in $H_{p,\theta}^1$ and $\bH_{q,p,\theta,\omega}^1(T)$, respectively. For $-\infty\leq S<T\leq\infty$, and functions defined on $\Omega_{S,T}:=(S,T)\times\bR^d_+$, we denote $\bL_{q,p,\theta,\omega}(S,T)$ and $\bH^n_{q,p,\theta,\omega}(S,T)$ in a similar way.

We introduce the function space for $u_t=\partial_t u$ as follows. For a fixed function $a_0$, we define
\begin{align*}
  \widetilde{\bH}_{q,p,\theta,\omega}^{-1}(T):=\{u:a_0u=D_iF_i+f, \text{ where } x_d^{-1}F,f\in \bL_{q,p,\theta,\omega}(T) \},
\end{align*}
that is equipped with the norm
\begin{align*}
  \|u\|_{\widetilde{\bH}_{q,p,\theta,\omega}^{-1}(T)}:=\inf\{ \|x_d^{-1}F\|_{\bL_{q,p,\theta,\omega}(T)} + \|f\|_{\bL_{q,p,\theta,\omega}(T)}: a_0u=D_iF_i+f \}.
\end{align*}
We also define $\widetilde{\bH}_{q,p,\theta,\omega}^{-1}(S,T)$ in a similar way.
Now we define the solution space $\cH_{q,p,\theta,\omega}^1(T)$ to be the closure of $C_c^\infty((-\infty,T]\times \bR^d_+)$ under the norm
\begin{equation*}
  \|u\|_{\cH_{q,p,\theta,\omega}^1(T)}:=\|u\|_{\bH_{q,p,\theta,\omega}^1(T)}+\|u_t\|_{\widetilde{\bH}_{q,p,\theta,\omega}^{-1}(T)}.
\end{equation*}
Here, for $u_t\in \widetilde{\bH}_{q,p,\theta,\omega}^{-1}(T)$, the equality $a_0u_t=D_iF_i+f$ is understood in the weak formulation; for any $\varphi \in C_c^\infty((-\infty,T)\times\bR^d_+)$,
\begin{equation*} 
  -\int_{\Omega_T} a_0 u \varphi_t dxdt = -\int_{\Omega_T} F_i D_i\varphi dxdt + \int_{\Omega_T} f \varphi dxdt
\end{equation*}
if $a_0=a_0(x_d)$, and
\begin{equation*}
  -\int_{\Omega_T} u \varphi_t dxdt = -\int_{\Omega_T} \frac{1}{a_0} F_i D_i\varphi dxdt + \int_{\Omega_T} \frac{1}{a_0} f \varphi dxdt
\end{equation*}
if $a_0=a_0(t)$.
We also write $\cH_{p,\theta}^1(T) := \cH_{p,p,\theta,1}^1(T)$ and $\cH_{p,\theta,\omega}^1(T) := \cH_{p,p,\theta,\omega}^1(T)$.
For equations defined on $(S,T)\times\bR^d_+$, we consider the solution space $\mathring{\cH}_{q,p,\theta,\omega}^1(S,T)$, which is defined as the closure of the set of functions $u\in C_c^\infty([S,T]\times \bR^d_+)$ with $u(S,\cdot)=0$, equipped with the norm
\begin{equation*}
  \|u\|_{\mathring{\cH}_{q,p,\theta,\omega}^1(S,T)}:=\|u\|_{\bH_{q,p,\theta,\omega}^1(S,T)}+\|u_t\|_{\widetilde{\bH}_{q,p,\theta,\omega}^{-1}(S,T)}.
\end{equation*}

We recall the definition of the $A_p$ Muckenhoupt class of weights.

\begin{definition}
  Let $p\in(1,\infty)$. A locally integrable function $\omega:\bR \to (0,\infty)$ is said to be in the $A_p(\bR)$ Muckenhoupt class of weights if
  \begin{equation*}
    [\omega]_{A_p(\bR)}:=[\omega]_{A_p}:=\sup_{r>0, t\in\bR} \left( \aint_{t-r}^{t} \omega(s)ds \right)\left( \aint_{t-r}^{t} \omega(s)^{1/(1-p)} ds \right)^{p-1} <\infty.
  \end{equation*}
\end{definition}

\subsection{Parabolic equations}

In this subsection, we present our main results regarding parabolic equations.

We first impose the following regularity assumptions on the coefficients, where the parameters $\rho_0\in(1/2,1)$ and $\gamma_0>0$ will be specified later.

\begin{assumption}[$\rho_0,\gamma_0$] \label{ass_lead}
For each $x_0\in \bR^d_+$ and $\rho\in(0,\rho_0x_{0d}]$, there exist coefficients $[a_{ij}]_{\rho,x_0}$ and $[c_{0}]_{\rho,x_0}$ satisfying \eqref{ellip}-\eqref{eq7151434}. Moreover,
\begin{itemize}
  \item $a_0, [a_{dd}]_{\rho, x_0}$, and $[c_0]_{\rho, x_0}$ either:
  \begin{itemize}
    \item depend only on $x_d$ for all $\rho\in(0,\rho_0x_{0d}]$, or 
    \item depend only on $t$ for all $\rho\in(0,\rho_0x_{0d}]$,
  \end{itemize}

  \item $[a_{ij}]_{\rho,x_0}$ depend only on $(t,x_d)$ for $(i,j)\neq(d,d)$,

  \item for any $t\in(-\infty,T)$ and $\rho\in(0,\rho_0 x_{0d}]$,
   \begin{align} \label{eq11081151}
    &\aint_{B_\rho(x_0)} \Big(\left| a_{ij}(t,y)- [a_{ij}]_{\rho,x_0}(t,y_d) \right| + \left| c_0(t,y)- [c_0]_{\rho,x_0}(t,y_d) \right|\Big) \, dy < \gamma_0.
  \end{align}
\end{itemize}
\end{assumption}

To handle the case when $\lambda=0$, we impose the following stronger assumption on the coefficients.

\begin{assumption}[$\rho_0,\gamma_0$] \label{ass_coeff}
For each $x_0\in \bR^d_+$ and $\rho\in(0,\rho_0x_{0d}]$, there exist coefficients $[a_{ij}]_{\rho,x_0}, [b_{i}]_{\rho,x_0}, [\hat{b}_{i}]_{\rho,x_0}, [c]_{\rho,x_0}$, and $[c_0]_{\rho,x_0}$ satisfying \eqref{ellip}-\eqref{eq7151434}, and the ratio condition
    \begin{eqnarray*}
    \frac{[b_{d}]_{\rho,x_0}}{[a_{dd}]_{\rho,x_0}}=n_b, \quad \frac{[\hat{b}_{d}]_{\rho,x_0}}{[a_{dd}]_{\rho,x_0}}=n_{\hat{b}}, \quad \frac{[c]_{\rho,x_0}}{[a_{dd}]_{\rho,x_0}}=n_{c}
  \end{eqnarray*}
  for some $n_b,n_{\hat{b}},n_c\in\bR$ independent of $x_0$ and $\rho$.
  Moreover,
\begin{itemize}
  \item one of the following is satisfied:
  \begin{itemize}
    \item $[a_{dd}]_{\rho, x_0}, [b_d]_{\rho, x_0}, [\hat{b}_d]_{\rho, x_0}$, and  $[c]_{\rho, x_0}$ are constant, and $a_0$ and $[c_0]_{\rho, x_0}$ depend only on $x_d$ for all $\rho\in(0,\rho_0x_{0d}]$,
    \item $a_0, [a_{dd}]_{\rho, x_0}, [b_d]_{\rho, x_0}, [\hat{b}_d]_{\rho, x_0}, [c]_{\rho, x_0}$, and $[c_0]_{\rho, x_0}$ depend only on $t$ for all $\rho\in(0,\rho_0x_{0d}]$,
  \end{itemize}

  \item $[a_{ij}]_{\rho,x_0}$ depend only on $(t,x_d)$ for $(i,j)\neq (d,d)$,

  \item $[b_{i}]_{\rho,x_0}$ and $[\hat{b}_{i}]_{\rho,x_0}$ depend only on $(t,x_d)$ for $i\neq d$,

  \item for any $t\in(-\infty,T)$,
  \begin{align} \label{eq6030053}
  &\aint_{B_\rho(x_0)} \Big( \left| a_{ij}(t,y)- [a_{ij}]_{\rho,x_0}(t,y_d) \right| + \left| b_{i}(t,y)- [b_{i}]_{\rho,x_0}(t,y_d) \right| \nonumber
    \\
    &\qquad\qquad+ \left| \hat{b}_{i}(t,y)- [\hat{b}_{i}]_{\rho,x_0}(t,y_d) \right| + \left| c(t,y)- [c]_{\rho,x_0}(t,y_d) \right|  \nonumber
        \\
    &\qquad\qquad+ \left| c_0(t,y)- [c_0]_{\rho,x_0}(t,y_d) \right| \Big) \, dy < \gamma_0.
  \end{align}
\end{itemize}
\end{assumption}

Here, we remark that in \eqref{eq11081151} and \eqref{eq6030053}, coefficients are appropriately understood based on the variables of dependency. For instance, $[a_{dd}]_{\rho,x_0}(t,y_d)$ can be either $[a_{dd}]_{\rho,x_0}(y_d)$ or $[a_{dd}]_{\rho,x_0}(t)$.

\begin{remark} \label{rem5101542}
Under Assumption \ref{ass_lead} $(\rho_0,\gamma_0)$ or Assumption \ref{ass_coeff} $(\rho_0,\gamma_0)$, for any $\theta\in \bR$, there is a constant $N=N(d,\rho_0,\theta)$ such that
  \begin{equation} \label{eq4022235}
    \sup_{(t,x)\in\bR^{d+1}_+} \sup_{\rho\in(0,\rho x_d]} \aint_{x_d-\rho}^{x_d+\rho} \aint_{B'_\rho(x')} \left| a_{ij}(t,y',y_d)- [a_{ij}]_{\rho,x_0}(t,y_d) \right| dy' \mu_\theta(dy_d)<N\gamma_0,
  \end{equation}
  where $\mu_\theta(dy_d) := y_d^{\theta-1}dy_d$ and $B'_\rho(x')=\{y'\in \bR^{d-1}:|x'-y'|<\rho\}$.
This can be shown by using $\mu_{\theta} ((x_d-\rho,x_d+\rho))\approx x_d^{\theta}$ and $y_d^\theta\approx x_d^\theta$ for $\rho\in(0,\rho_0 x_d]$ and $y_d\in (x_d-\rho,x_d+\rho)$.

Moreover, one can also obtain \eqref{eq4022235} for the lower-order coefficients.
\end{remark}

Now we present the definition of weak solutions to \eqref{maindiv}.

\begin{definition} \label{weak}
  Let $p,q\in (1,\infty)$, $\theta\in\bR$, $T\in(-\infty,\infty]$, $\omega\in A_q(\bR)$, and $F,f\in L_{1,loc}(\Omega_T)$. 

  $(i)$ In the case when $a_0=a_0(x_d)$, we say that $u\in \bH^1_{q,p,\theta,\omega}(T)$ is a weak solution to \eqref{maindiv} if
  \begin{align*}
    &-\int_{\Omega_T} a_0 u\varphi_t dxdt + \int_{\Omega_T} a_{ij} D_juD_{i}(x_d^2\varphi) dxdt + \int_{\Omega_T} x_d \varphi b_i D_iudxdt 
    \\
    &\quad- \int_{\Omega_T} \hat{b}_i uD_i(x_d\varphi) dxdt + \int_{\Omega_T} c u\varphi dxdt + \int_{\Omega_T} \lambda c_0 u\varphi dxdt 
    \\
    &= - \int_{\Omega_T} F_i D_i\varphi dxdt + \int_{\Omega_T} f\varphi dxdt
  \end{align*}
  for any $\varphi \in C_c^\infty((-\infty,T)\times \bR^d_+)$.

  $(ii)$ In the case when $a_0=a_0(t)$, we say that $u\in \bH^1_{q,p,\theta,\omega}(T)$ is a weak solution to \eqref{maindiv} if
  \begin{align*}
    &-\int_{\Omega_T} u\varphi_t dxdt + \int_{\Omega_T} \frac{a_{ij}}{a_0} D_juD_{i}(x_d^2\varphi) dxdt + \int_{\Omega_T} x_d \varphi \frac{b_i}{a_0} D_iu dxdt 
    \\
    &\quad- \int_{\Omega_T} \frac{\hat{b}_i}{a_0} uD_i(x_d\varphi) dxdt + \int_{\Omega_T} \frac{c}{a_0} u\varphi dxdt + \int_{\Omega_T} \lambda \frac{c_0}{a_0} u\varphi dxdt 
    \\
    &= - \int_{\Omega_T} \frac{1}{a_0} F_i D_i\varphi dxdt + \int_{\Omega_T} \frac{1}{a_0} f\varphi dxdt
  \end{align*}
  for any $\varphi \in C_c^\infty((-\infty,T)\times \bR^d_+)$.
\end{definition}

Below is our first result for \eqref{maindiv}.
Note that we always assume that the coefficients $a_0,(b_i),(\hat{b}_i), c$, and $c_0$ satisfy \eqref{ellip}-\eqref{eq7151434}.

\begin{theorem} \label{thm_para_div}
  Let $T\in(-\infty,\infty]$, $p,q\in(1,\infty)$, $\theta\in\bR$, and $K_0$ be a constant such that $[\omega]_{A_q}\leq K_0$. 

$(i)$ There exist
 \begin{eqnarray*}
   \rho_0=\rho_0(d,p,q,\theta,\nu,K,K_0)\in(1/2,1)
 \end{eqnarray*}
 sufficiently close to $1$, a sufficiently small number
 \begin{eqnarray*}
   \gamma_0=\gamma_0(d,p,q,\theta,\nu,K,K_0)>0,
 \end{eqnarray*}
 and a sufficiently large number
  \begin{eqnarray*}
   \lambda_0=\lambda_0(d,p,q,\theta,\nu,K,K_0)\geq 0,
 \end{eqnarray*}
 such that if Assumption \ref{ass_lead} $(\rho_0,\gamma_0)$ is satisfied, then the following assertions hold true.
  For any $\lambda\geq \lambda_0$, and $x_d^{-1}F:=x_d^{-1}(F_1,\dots,F_d), f \in \bL_{q,p,\theta,\omega}(T)$, there is a unique solution $u\in \cH_{q,p,\theta,\omega}^1(T)$ to \eqref{maindiv}.
Moreover, for this solution, we have
\begin{align} \label{estdiv}
  &(1+\sqrt{\lambda}) \|u\|_{\bL_{q,p,\theta,\omega}(T)} + \|x_dD_xu\|_{\bL_{q,p,\theta,\omega}(T)} \nonumber
  \\
  &\leq N \left( \|x_d^{-1}F\|_{\bL_{q,p,\theta,\omega}(T)} + \frac{1}{1+\sqrt{\lambda}}\|f\|_{\bL_{q,p,\theta,\omega}(T)} \right),
\end{align}
where $N=N(d,p,q,\theta,\nu,K,K_0)$.

$(ii)$ Let $n_b,n_{\hat{b}},n_c\in\bR$, and assume that the quadratic equation 
\begin{equation} \label{eq5091641}
  z^2+(1+n_b+n_{\hat{b}})z-n_c=0
\end{equation}
has two distinct real roots $\alpha<\beta$. Then there exist
 \begin{eqnarray*}
   \rho_0=\rho_0(d,p,q,\theta,n_b,n_{\hat{b}},n_c,\nu,K,K_0)\in(1/2,1)
 \end{eqnarray*}
 sufficiently close to $1$, and a sufficiently small number
 \begin{eqnarray*}
   \gamma_0=\gamma_0(d,p,q,\theta,n_b,n_{\hat{b}},n_c,\nu,K,K_0)>0
 \end{eqnarray*}
 such that under Assumption \ref{ass_coeff} $(\rho_0,\gamma_0)$, the assertions in $(i)$ hold with $\lambda_0=0$ and $\alpha p<\theta<\beta p$, where the dependencies of the constant $N$ are replaced by $d,p,q,\theta,n_b,n_{\hat{b}},n_c,\nu,K$, and $K_0$.
\end{theorem}

\begin{remark}
In Theorem \ref{thm_para_div} $(i)$, $(b_i),(\hat{b}_i)$, and $c$ are assumed to be merely bounded measurable, and $\theta\in\bR$ is arbitrary, although $\lambda_0\geq0$ is sufficiently large.
On the other hand, in $(ii)$ of the theorem, the conditions on the lower-order coefficients and $\theta$ are restricted, but we can take $\lambda=0$.
\end{remark}

We also consider the Cauchy problems on a finite time interval:
\begin{equation} \label{div_finite}
  a_0 u_t - x_d^2 D_i(a_{ij}D_{j}u) + x_d b_iD_iu + x_dD_i(\hat{b}_iu) + cu +\lambda c_0 u =D_iF+f
\end{equation}
in $\Omega_{0,T}:=(0,T)\times\bR^d_+$ with $u(0,\cdot)=0$.

Below we present the definition of weak solutions to \eqref{div_finite} with general initial data, although we only deal with zero initial values.

\begin{definition}
  Let $p\in (1,\infty)$, $\theta\in\bR$, $T\in(0,\infty]$, $\omega\in A_p(\bR)$, $u_0\in L_{1,loc}(\bR^d_+)$, and $F,f\in L_{1,loc}(\Omega_{0,T})$. 

$(i)$ In the case when $a_0=a_0(x_d)$, we say that $u\in \bH^1_{q,p,\theta,\omega}(0,T)$ is a weak solution to \eqref{div_finite} with $u(0,\cdot)=u_0$ if
  \begin{align*}
    &-\int_{\bR^d_+} a_0 u_0\varphi(0,\cdot) dx - \int_{\Omega_{0,T}} a_0 u\varphi_t dxdt
    \\
    &\quad+ \int_{\Omega_{0,T}} a_{ij}D_iuD_{j}(x_d^2\varphi) dxdt + \int_{\Omega_{0,T}} x_d b_i \varphi D_iudxdt
    \\
    &\quad- \int_{\Omega_{0,T}} \hat{b}_i uD_i(x_d\varphi) dxdt + \int_{\Omega_{0,T}} cu\varphi dxdt + \int_{\Omega_{0,T}} \lambda c_0 u\varphi dxdt 
    \\
    &= - \int_{\Omega_{0,T}} F_i D_i\varphi dxdt + \int_{\Omega_{0,T}} f\varphi dxdt
  \end{align*}
  for any $\varphi \in C_c^\infty([0,T)\times \bR^d_+)$.

  $(ii)$ In the case when $a_0=a_0(t)$, we say that $u\in \bH^1_{q,p,\theta,\omega}(0,T)$ is a weak solution to \eqref{div_finite} with $u(0,\cdot)=u_0$ if
    \begin{align*}
    &-\int_{\bR^d_+} u_0\varphi(0,\cdot) dx - \int_{\Omega_{0,T}} u\varphi_t dxdt 
    \\
    &\quad+ \int_{\Omega_{0,T}} \frac{a_{ij}}{a_0} D_iu D_{j}(x_d^2\varphi) dxdt + \int_{\Omega_{0,T}} x_d \frac{b_i}{a_0} \varphi D_iudxdt
    \\
    &\quad- \int_{\Omega_{0,T}} \frac{\hat{b}_i}{a_0} uD_i(x_d\varphi) dxdt + \int_{\Omega_{0,T}} \frac{c}{a_0} u\varphi dxdt + \int_{\Omega_{0,T}} \lambda \frac{c_0}{a_0} u\varphi dxdt 
    \\
    &= - \int_{\Omega_{0,T}} \frac{1}{a_0} F_i D_i\varphi dxdt + \int_{\Omega_{0,T}} \frac{1}{a_0} f \varphi dxdt
  \end{align*}
  for any $\varphi \in C_c^\infty([0,T)\times \bR^d_+)$.
\end{definition}

Our next result is regarding the solvability of the Cauchy problem. 

\begin{theorem} \label{thm_finite_div}
  Let $T\in(0,\infty)$, $p,q\in(1,\infty)$, $\theta\in\bR$, and $K_0$ be a constant such that $[\omega]_{A_q}\leq K_0$.
 Then there exist
 \begin{eqnarray*}
   \rho_0=\rho_0(d,p,q,\theta,\nu,K,K_0)\in(1/2,1)
 \end{eqnarray*}
 sufficiently close to $1$, and a sufficiently small number
 \begin{eqnarray*}
   \gamma_0=\gamma_0(d,p,q,\theta,\nu,K,K_0)>0,
 \end{eqnarray*}
 such that if Assumption \ref{ass_lead} $(\rho_0,\gamma_0)$ is satisfied, then the following assertions hold.
 For any $\lambda\geq 0$, and $x_d^{-1}F:=x_d^{-1}(F_1,\dots,F_d),f \in \bL_{q,p,\theta,\omega}(T)$, there is a unique solution $u\in \mathring{\cH}_{q,p,\theta,\omega}^1(0,T)$ to \eqref{div_finite} with zero initial condition $u(0,\cdot)=0$.
Moreover, for this solution, we have
\begin{align} \label{eq10021627}
  &(1+\sqrt{\lambda}) \|u\|_{\bL_{q,p,\theta,\omega}(0,T)} + \|x_dD_xu\|_{\bL_{q,p,\theta,\omega}(0,T)} \nonumber
  \\
  &\leq N \left( \|x_d^{-1}F\|_{\bL_{q,p,\theta,\omega}(0,T)} + \frac{1}{1+\sqrt{\lambda}}\|f\|_{\bL_{q,p,\theta,\omega}(0,T)} \right),
\end{align}
where $N=N(d,p,q,\theta,\nu,K,K_0,T)$.
\end{theorem}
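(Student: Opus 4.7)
The strategy is to reduce Theorem \ref{thm_finite_div} to Theorem \ref{thm_para_div}$(i)$ via a standard exponential rescaling in time combined with extension by zero to $(-\infty, 0)$; the cost is only a multiplicative factor depending on $T$, which fully accounts for the $T$-dependence in the constant $N$. Fix $\mu := \lambda_0$, the threshold from Theorem \ref{thm_para_div}$(i)$, and choose the parameters $\rho_0,\gamma_0$ in the statement of Theorem \ref{thm_finite_div} small enough that the hypotheses of Theorem \ref{thm_para_div}$(i)$ are met. For $u \in \mathring{\cH}^1_{q,p,\theta,\omega}(0,T)$ set $v(t,x) := e^{-\mu t} u(t,x)$; a direct computation using $a_0 u_t = e^{\mu t}(a_0 v_t + \mu a_0 v)$ shows that $u$ solves \eqref{div_finite} with $u(0,\cdot) = 0$ if and only if $v$ solves
\begin{equation*}
a_0 v_t - x_d^2 D_i(a_{ij}D_j v) + x_d b_i D_i v + x_d D_i(\hat b_i v) + c v + (\lambda + \mu) \tilde c_0 v = D_i \tilde F + \tilde f
\end{equation*}
with $v(0,\cdot) = 0$, where $\tilde c_0 := (\mu a_0 + \lambda c_0)/(\mu + \lambda)$, $\tilde F := e^{-\mu t} F$, and $\tilde f := e^{-\mu t} f$.

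The key observation is that $\tilde c_0$ inherits every requirement imposed on $c_0$ by Assumption \ref{ass_lead}$(\rho_0,\gamma_0)$: being a convex combination of $a_0$ and $c_0$, it lies in $[K^{-1}, K]$; it has the same structural dependence (only on $x_d$, or only on $t$) as $a_0$ and $c_0$; and taking the local approximation $[\tilde c_0]_{\rho,x_0} := (\mu a_0 + \lambda [c_0]_{\rho,x_0})/(\mu+\lambda)$ produces oscillation equal to $\tfrac{\lambda}{\lambda+\mu}$ times that of $c_0$, hence bounded by $\gamma_0$. Therefore the transformed equation falls under the hypotheses of Theorem \ref{thm_para_div}$(i)$ with effective parameter $\lambda + \mu \ge \lambda_0$. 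Extending $\tilde F, \tilde f$ by zero to negative times, that theorem produces a unique $\bar v \in \cH^1_{q,p,\theta,\omega}(T)$ solving the transformed equation on $\Omega_T$; since the sources vanish on $(-\infty,0)$, uniqueness forces $\bar v \equiv 0$ there, so $v := \bar v|_{(0,T)}$ lies in $\mathring{\cH}^1_{q,p,\theta,\omega}(0,T)$ and $u := e^{\mu t} v$ is the desired solution to \eqref{div_finite}. Uniqueness in $\mathring{\cH}^1_{q,p,\theta,\omega}(0,T)$ follows analogously by extending the difference of two solutions by zero to $(-\infty,0)$ and invoking uniqueness from Theorem \ref{thm_para_div}$(i)$.

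For the estimate \eqref{eq10021627}, I apply \eqref{estdiv} to $\bar v$ with parameter $\lambda+\mu$, combine the bounds $\|e^{-\mu t} F\|_{\bL_{q,p,\theta,\omega}(-\infty,T)} \le \|F\|_{\bL_{q,p,\theta,\omega}(0,T)}$ and similarly for $f$ (valid since $e^{-\mu t} \le 1$ on $[0,T]$), the monotonicity $1+\sqrt{\lambda} \le 1+\sqrt{\lambda+\mu}$ together with $(1+\sqrt{\lambda+\mu})^{-1} \le (1+\sqrt{\lambda})^{-1}$, and the pointwise bound $|u| \le e^{\mu T}|v|$ on $(0,T)$ (and likewise for $x_d D_x u$). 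This produces \eqref{eq10021627} with constant $N \cdot e^{\mu T}$, where $N$ depends on the parameters listed in Theorem \ref{thm_para_div}$(i)$.

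The principal obstacle is verifying that the exponential rescaling does not destroy the delicate structural and small-BMO conditions of Assumption \ref{ass_lead}; the convex-combination representation of $\tilde c_0$ is the technical device that makes this transparent. Beyond this bookkeeping, the finite time horizon together with the zero initial data simply allows us to trade the lower bound $\lambda \ge \lambda_0$ in Theorem \ref{thm_para_div}$(i)$ for a $T$-dependent multiplicative constant, introducing no genuinely new analytical difficulty.
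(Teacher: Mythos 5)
Your proposal is correct and follows essentially the same route as the paper: extend the data and solution by zero to $t\le 0$, substitute $v=e^{-\mu t}u$ so that the zeroth-order coefficient becomes the convex combination $(\mu a_0+\lambda c_0)/(\mu+\lambda)$, which still satisfies Assumption \ref{ass_lead}, and then invoke Theorem \ref{thm_para_div}$(i)$ with effective parameter $\lambda+\mu\ge\lambda_0$, paying the factor $e^{\mu T}$ in the constant. The only point treated more carefully in the paper is the membership $v\in\mathring{\cH}^1_{q,p,\theta,\omega}(0,T)$ (the paper first reduces to $F,f\in C_c^\infty((0,T)\times\bR^d_+)$ so the solution vanishes for $t<\delta$ and is then approximable by admissible smooth functions), but your vanishing of $\bar v$ on $(-\infty,0)$ yields the same conclusion by a routine time-shift and mollification.
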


\begin{remark}
In Theorem \ref{thm_finite_div}, by letting the constant $N$ depend also on $T$, we can consider arbitrary $\lambda\geq0$ and $\theta\in\bR$.
\end{remark}

\subsection{Elliptic equations}

In this subsection, we state our main results for elliptic equations.

We state our regularity assumptions on the coefficients, where the parameters $\rho_0\in(1/2,1)$ and $\gamma_0>0$ will be specified later.

\begin{assumption}[$\rho_0,\gamma_0$] \label{ass_ellip_lead}
For each $x_0\in \bR^d_+$ and $\rho\in(0,\rho_0x_{0d}]$, there exist coefficients $[a_{ij}]_{\rho,x_0}$ and $[c_0]_{\rho,x_0}$ satisfying \eqref{ellip}-\eqref{eq7151434}.
  Moreover,
\begin{itemize}
    \item $[a_{ij}]_{\rho, x_0}$ and $[c_0]_{\rho, x_0}$ depend only on $x_d$,

  \item we have
  \begin{align*}
    &\aint_{B_\rho(x_0)} \Big( \left| a_{ij}(y)- [a_{ij}]_{\rho,x_0}(y_d) \right| + \left| c_0(y)- [c_0]_{\rho,x_0}(y_d) \right| \Big) \, dy < \gamma_0.
  \end{align*}
\end{itemize}
\end{assumption}

To handle the case when $\lambda=0$, we impose the following stronger assumption on the coefficients.

\begin{assumption}[$\rho_0,\gamma_0$] \label{ass_ellip}
For each $x_0\in \bR^d_+$ and $\rho\in(0,\rho_0x_{0d}]$, there exist coefficients $[a_{ij}]_{\rho,x_0}, [b_{i}]_{\rho,x_0}, [\hat{b}_{i}]_{\rho,x_0}, [c]_{\rho,x_0}$, and $[c_0]_{\rho,x_0}$ satisfying \eqref{ellip}-\eqref{eq7151434}, and the ratio condition
    \begin{eqnarray*}
    \frac{[b_{d}]_{\rho,x_0}}{[a_{dd}]_{\rho,x_0}}=n_b, \quad \frac{[\hat{b}_{d}]_{\rho,x_0}}{[a_{dd}]_{\rho,x_0}}=n_{\hat{b}}, \quad \frac{[c]_{\rho,x_0}}{[a_{dd}]_{\rho,x_0}}=n_{c}
  \end{eqnarray*}
  for some $n_b,n_{\hat{b}},n_c\in\bR$ independent of $x_0$ and $\rho$.
  Moreover,
\begin{itemize}
    \item $[a_{dd}]_{\rho, x_0}, [b_d]_{\rho, x_0}, [\hat{b}_d]_{\rho, x_0}$, and  $[c]_{\rho, x_0}$ are constant,

  \item $[a_{ij}]_{\rho,x_0}$ depend only on $x_d$ for $(i,j)\neq (d,d)$,

  \item $[b_{i}]_{\rho,x_0}$ and $[\hat{b}_{i}]_{\rho,x_0}$ depend only on $x_d$ for $i\neq d$,

  \item we have
  \begin{align*}
    &\aint_{B_\rho(x_0)} \Big( \left| a_{ij}(y)- [a_{ij}]_{\rho,x_0}(y_d) \right| + \left| b_{i}(y)- [b_{i}]_{\rho,x_0}(y_d) \right| \nonumber
    \\
    &\qquad\qquad+\left| \hat{b}_{i}(y)- [\hat{b}_{i}]_{\rho,x_0}(y_d) \right| + \left| c(y)- [c]_{\rho,x_0}(y_d) \right| \nonumber
        \\
    &\qquad\qquad+ \left| c_0(y)- [c_0]_{\rho,x_0}(y_d) \right| \Big) \, dy < \gamma_0.
  \end{align*}
\end{itemize}
\end{assumption}

Now we present the definition of weak solutions to \eqref{ellmaindiv}.

\begin{definition}
  Let $p\in (1,\infty)$, $\theta\in\bR$, and $F,f\in L_{1,loc}(\bR^d_+)$. 
We say that $u\in H^1_{p,\theta}$ is a weak solution to \eqref{ellmaindiv} if
  \begin{align*}
    &\int_{\bR^d_+} a_{ij} D_juD_{i}(x_d^2\varphi) dx + \int_{\bR^d_+} x_d b_i \varphi D_iu dx 
    \\
    &\quad- \int_{\bR^d_+} \hat{b}_i uD_i(x_d\varphi) dx + \int_{\bR^d_+} c u\varphi dx + \int_{\bR^d_+} \lambda c_0 u\varphi dx 
    \\
    &= - \int_{\bR^d_+} F_i D_i\varphi dx + \int_{\bR^d_+} f\varphi dx
  \end{align*}
  for any $\varphi \in C_c^\infty(\bR^d_+)$.
\end{definition}

The following is our results for the equation \eqref{ellmaindiv}.

\begin{theorem} \label{thm_ell}
Let $p\in(1,\infty)$, and $\theta\in\bR$.  

$(i)$ There exist
 \begin{eqnarray*}
   \rho_0=\rho_0(d,p,\theta,\nu,K)\in(1/2,1)
 \end{eqnarray*}
 sufficiently close to $1$, a sufficiently small number
 \begin{eqnarray*}
   \gamma_0=\gamma_0(d,p,\theta,\nu,K)>0,
 \end{eqnarray*}
 and a sufficiently large number
  \begin{eqnarray*}
   \lambda_0=\lambda_0(d,p,\theta,\nu,K)\geq 0,
 \end{eqnarray*}
 such that under Assumption \ref{ass_ellip_lead} $(\rho_0,\gamma_0)$, the following hold.
For any $\lambda\geq \lambda_0$, and $x_d^{-1}F:=x_d^{-1}(F_1,\dots,F_d),f \in L_{p,\theta}$, there is a unique solution $u\in H_{p,\theta}^1$ to \eqref{ellmaindiv}.
Moreover, for this solution, we have
\begin{align} \label{eq10081100}
  &(1+\sqrt{\lambda}) \|u\|_{L_{p,\theta}} + \|x_dD_xu\|_{L_{p,\theta}} \leq N \left( \|x_d^{-1}F\|_{L_{p,\theta}} + \frac{1}{1+\sqrt{\lambda}}\|f\|_{L_{p,\theta}} \right),
\end{align}
where $N=N(d,p,\theta,\nu,K)$.

$(ii)$ Let $n_b,n_{\hat{b}},n_c\in\bR$, and assume that the quadratic equation 
\begin{equation*}
  z^2+(1+n_b+n_{\hat{b}})z-n_c=0
\end{equation*}
has two distinct real roots $\alpha<\beta$. Then there exist
 \begin{eqnarray*}
   \rho_0=\rho_0(d,p,\theta,n_b,n_{\hat{b}},n_c,\nu,K)\in(1/2,1)
 \end{eqnarray*}
 sufficiently close to $1$, and a sufficiently small number
 \begin{eqnarray*}
   \gamma_0=\gamma_0(d,p,\theta,n_b,n_{\hat{b}},n_c,\nu,K)>0
 \end{eqnarray*}
such that under Assumption \ref{ass_ellip} $(\rho_0,\gamma_0)$, the assertions in $(i)$ hold with $\lambda_0=0$ and $\alpha p<\theta<\beta p$, where the dependencies of the constant $N$ are replaced by $d,p,\theta,n_b,n_{\hat{b}},n_c,\nu$, and $K$.

$(iii)$ When $d=1$ and $\lambda=0$, the assertions in $(ii)$ hold true for $\theta\in \bR\setminus\{\alpha p,\beta p\}$.
\end{theorem}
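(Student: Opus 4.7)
The idea is to obtain parts (i) and (ii) by reducing to the parabolic theorem \ref{thm_para_div} via a time-cutoff, and to treat part (iii) by combining a direct Mellin analysis of the frozen one-dimensional ODE with the BMO-perturbation machinery developed in Section \ref{sec_ell}.

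\emph{Parts (i) and (ii).} Under Assumption \ref{ass_ellip_lead} (resp.\ \ref{ass_ellip}), the elliptic coefficients are independent of $t$ and thus fit into the ``$x_d$-only'' alternative of Assumption \ref{ass_lead} (resp.\ \ref{ass_coeff}); Theorem \ref{thm_para_div}~(i) (resp.\ (ii)) then applies with $q=p$, $\omega\equiv 1$. To obtain the a priori estimate \eqref{eq10081100}, I pick $\zeta\in C_c^\infty((-1,1))$ with $\|\zeta\|_{L_p(\bR)}=1$, set $\zeta_R(t) := R^{-1/p}\zeta(t/R)$ (so $\|\zeta_R\|_{L_p}=1$ and $\|\zeta_R'\|_{L_p}=R^{-1}\|\zeta'\|_{L_p}$), and let $w(t,x) := \zeta_R(t)\,u(x)$, where $u\in H^1_{p,\theta}$ solves \eqref{ellmaindiv}. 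A direct check against Definition \ref{weak} (in both sub-cases of $a_0$) shows $w\in\cH^1_{p,\theta}(\infty)$ is a weak solution of \eqref{maindiv} with forcing $D_i(\zeta_R F_i) + \zeta_R f + a_0\,\zeta_R'(t)\,u(x)$. Applying Theorem \ref{thm_para_div}, every norm of $w$ factors out through $\|\zeta_R\|_{L_p}=1$, yielding
\[
(1+\sqrt\lambda)\|u\|_{L_{p,\theta}} + \|x_d D_x u\|_{L_{p,\theta}} \leq N\|x_d^{-1}F\|_{L_{p,\theta}} + \frac{N\|f\|_{L_{p,\theta}}}{1+\sqrt\lambda} + \frac{NK\|\zeta'\|_{L_p}}{R\,(1+\sqrt\lambda)}\,\|u\|_{L_{p,\theta}},
\]
and the last term is absorbed on the left by taking $R$ large (depending on $\lambda_0$ in (i), or on an absolute constant in (ii), since $\sqrt\lambda=0$ there). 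Existence is then obtained by the method of continuity: connecting $\cL$ to the model operator $\cL_0 u := -x_d^2\Delta u$ through $\cL_\tau := (1-\tau)\cL_0 + \tau\cL$, the above a priori bound is uniform in $\tau\in[0,1]$, and solvability at $\tau=0$ follows from the constant-coefficient results of \cite{Khalf}.

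\emph{Part (iii).} For $d=1$ and $\lambda=0$, Assumption \ref{ass_ellip} forces $a_{11},b_1,\hat b_1,c$ to agree with constants $[a_{11}],[b_1],[\hat b_1],[c]$ up to small BMO perturbations. In the exactly constant-coefficient case, the equation is the ODE
\[
-[a_{11}]\,x^2 u'' + ([b_1]+[\hat b_1])\,x\,u' + [c]\,u = F' + f \quad\text{on } (0,\infty),
\]
and testing on $u=x^{-s}$ diagonalises the left-hand side into multiplication by $-[a_{11}]\,\Phi(s)$, where $\Phi(s)=s^2+(1+n_b+n_{\hat b})s-n_c$ has roots precisely $\alpha,\beta$. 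After the change of variable $x=e^y$, the Mellin transform on the line $\{\Re s=\theta/p\}$ becomes the Fourier transform on $L_p(\bR)$; for any $\theta\in\bR\setminus\{\alpha p,\beta p\}$, $\Phi$ is non-vanishing on this line, and both $\Phi(s)^{-1}$ and $s\,\Phi(s)^{-1}$ are smooth symbols with the required decay, satisfying Mikhlin's condition. This gives the solvability in $H^1_{p,\theta}$ and the estimate \eqref{eq10081100} for all admissible $\theta$ in the constant-coefficient case. The small-BMO perturbation is then absorbed by the same scale-adapted freezing / partition-of-unity argument used in Section \ref{sec_ell}, now with the frozen subproblem solvable for every $\theta\neq\alpha p,\beta p$.

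\emph{Main obstacle.} For parts (i) and (ii), the only subtlety is verifying the two variants of Definition \ref{weak} for $w=\zeta_R u$; the rest is absorption. For part (iii), the real work is extending the Section \ref{sec_ell} perturbation machinery outside the ``Hardy-friendly'' range $(\alpha p,\beta p)$: the zeroth-order estimate used there for the frozen problem relies on a weighted Hardy inequality that fails at $\theta\notin[\alpha p,\beta p]$, and must be replaced by the explicit Mellin representation above. Care is also needed at $\theta=\alpha p$ or $\theta=\beta p$ to confirm that the exclusion is sharp, since at those values the frozen operator fails to be an isomorphism on $H^1_{p,\theta}$ by the presence of the homogeneous solution $x^\alpha$ or $x^\beta$.
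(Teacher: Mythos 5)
Your a priori estimate for (i)--(ii) is exactly the paper's argument: the paper sets $v=\eta(t/n)u$, applies \eqref{estdiv} with $q=p$, $\omega=1$, and lets $n\to\infty$ so that the extra term $N n^{-1}(1+\sqrt\lambda)^{-1}\|u\|_{L_{p,\theta}}$ disappears; your $L_p$-normalized cutoff $\zeta_R$ is the same computation in different clothing. The one genuine gap is in your existence argument for part (ii): along the path $\cL_\tau=(1-\tau)\cL_0+\tau\cL$ with $\cL_0=-x_d^2\Delta$, the lower-order ratios (hence the roots $\alpha_\tau,\beta_\tau$ of the quadratic) vary with $\tau$ — at $\tau=0$ they are $-1,0$ — so the standing hypothesis $\alpha_\tau p<\theta<\beta_\tau p$, which is what makes the a priori bound available at $\lambda=0$, can fail for intermediate $\tau$. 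You must interpolate toward a model operator with the \emph{same} $n_b,n_{\hat b},n_c$ (as the paper does in Step 1 of Theorem \ref{thm_simple_div}), and you then still owe an endpoint solvability statement for that operator; citing \cite{Khalf} for bare $-x_d^2\Delta+\lambda$ does not supply it. The paper instead reduces to constant coefficients, solves on the strips $\bR^{d-1}\times(2^{-k},2^k)$ via \cite[Theorem 8.6]{DK11} (initially for $\lambda\ge\Lambda$), lifts each $u_k$ to the parabolic setting with the same time cutoff to get uniform $H^1_{p,\theta}$ bounds, passes to a weak limit, and finally removes $\lambda\ge\Lambda$ by continuity in $\lambda$ — a route you may as well adopt.

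For (iii) you take a genuinely different, and valid, route to the paper's Lemma \ref{lem1010953}. The paper writes the general solution of the constant-coefficient ODE explicitly as $(A_1+B_1)x^{-\alpha}+(A_2+B_2)x^{-\beta}$, chooses $B_1,B_2$ differently in the two regimes $\theta<\alpha p$ and $\theta>\beta p$, and estimates the resulting integral operators by the one-dimensional Hardy inequality; your conjugated Mellin/Mikhlin argument establishes the same invertibility for all $\theta\neq\alpha p,\beta p$ in one stroke and makes the sharpness of the excluded values transparent (what the explicit formula buys in exchange is the uniqueness statement for free, since the homogeneous solutions $C_1x^{-\alpha}+C_2x^{-\beta}$ visibly fail to lie in $H^1_{p,\theta}$ — a point you should still record). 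Both proofs then conclude identically, by rerunning the freezing/partition-of-unity machinery of Lemma \ref{lem_supp_div} and Theorem \ref{thm_para_div} with the elliptic frozen problem in place of the parabolic one; your level of detail there matches the paper's.
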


\begin{remark}
    In Theorems \ref{thm_para_div} and \ref{thm_ell}, it is necessary for our analysis that \eqref{eq5091641} has two distinct real roots. In particular, we require the range of $\theta$ in order to prove zeroth order estimate in Lemma \ref{lem_zero_div}. One can also observe from \eqref{eq5091832} that since \eqref{eq5091641} has two distinct real roots, behavior of solution to the elliptic problem can be described as a linear combination of power functions.
\end{remark}

\mysection{A priori estimates} \label{sec_apriori}

In this section, we obtain a priori estimates for $u\in C_c^\infty((-\infty,T]\times \bR^d_+)$.
First, we state weighted Hardy's inequality.

\begin{lemma} \label{lem_Hardy}
  Let $p\in(1,\infty)$, $\theta\in\bR$, and $u\in C_c^\infty(\bR^d_+)$. Then we have
  \begin{equation*}
    \frac{|\theta|^2}{p^2}\int_{\bR^d_+} |u|^p x_d^{\theta-1} dx \leq \int_{\bR^d_+} |u|^{p-2} (D_{d}u)^2 x_d^{\theta+1} dx.
  \end{equation*}
\end{lemma}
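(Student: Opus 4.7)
The plan is to reduce this to a straightforward integration by parts combined with the Cauchy--Schwarz inequality, together with a limiting argument to justify differentiating $|u|^p$ when $p<2$.

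First, the case $\theta = 0$ is trivial since the left-hand side vanishes, so assume $\theta \neq 0$. The key observation is the identity $x_d^{\theta-1} = \theta^{-1}\partial_d(x_d^\theta)$, so that
\begin{equation*}
  \int_{\bR^d_+} |u|^p x_d^{\theta-1}\,dx = \frac{1}{\theta}\int_{\bR^d_+} |u|^p \,\partial_d(x_d^\theta)\,dx.
\end{equation*}
Since $u \in C_c^\infty(\bR^d_+)$, the support of $u$ lies at positive distance from the boundary $\{x_d = 0\}$ and has bounded $x_d$, so there are no boundary contributions when we integrate by parts in the $x_d$ variable. This yields
\begin{equation*}
  \int_{\bR^d_+} |u|^p x_d^{\theta-1}\,dx = -\frac{1}{\theta}\int_{\bR^d_+} \partial_d(|u|^p)\, x_d^{\theta}\,dx = -\frac{p}{\theta}\int_{\bR^d_+} |u|^{p-2} u\, D_d u\, x_d^{\theta}\,dx,
\end{equation*}
where for $1<p<2$ the chain rule is understood in the regularized sense (replacing $|u|^p$ by $(u^2+\ep^2)^{p/2}$ and passing $\ep \downarrow 0$ using dominated convergence, which is valid because $D_d u$ is bounded and the integrand is supported in a bounded region of $\bR^d_+$).

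Next, I would split the weight factor $x_d^\theta$ symmetrically as $x_d^{(\theta-1)/2}\cdot x_d^{(\theta+1)/2}$ and factor the integrand as
\begin{equation*}
  |u|^{p-2}|u|\,|D_du|\,x_d^\theta = \bigl(|u|^p x_d^{\theta-1}\bigr)^{1/2}\bigl(|u|^{p-2}(D_du)^2 x_d^{\theta+1}\bigr)^{1/2}.
\end{equation*}
Applying the Cauchy--Schwarz inequality in $L^2(\bR^d_+)$ and taking absolute values gives
\begin{equation*}
  \int_{\bR^d_+} |u|^p x_d^{\theta-1}\,dx \leq \frac{p}{|\theta|}\left(\int_{\bR^d_+} |u|^p x_d^{\theta-1}\,dx\right)^{1/2}\left(\int_{\bR^d_+} |u|^{p-2}(D_du)^2 x_d^{\theta+1}\,dx\right)^{1/2}.
\end{equation*}
Dividing both sides by the first factor on the right (if it is zero the claimed inequality is trivial) and squaring yields the asserted bound with constant $|\theta|^2/p^2$.

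There is no real obstacle here; the only mildly delicate point is justifying the chain rule $\partial_d(|u|^p) = p|u|^{p-2} u\,D_du$ when $1<p<2$, which is handled by the standard regularization $|u|\rightsquigarrow (u^2+\ep^2)^{1/2}$ and letting $\ep\to 0$ at the end, since every integral in sight is absolutely convergent on the compact support of $u$.
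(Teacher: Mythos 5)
Your proof is correct. It takes a self-contained route where the paper instead delegates the analytic core to a citation: the paper quotes the one-dimensional weighted Hardy inequality $\int_0^\infty |v|^2 r^{\theta-1}\,dr \le \tfrac{4}{|\theta|^2}\int_0^\infty |v'|^2 r^{\theta+1}\,dr$ from Kufner's book, substitutes $v(r)=|u(x',r)|^{p/2}$ (so that $|v'|^2=\tfrac{p^2}{4}|u|^{p-2}(D_du)^2$), and integrates over $x'$. You instead reprove that inequality from scratch in the $d$-dimensional setting, via $x_d^{\theta-1}=\theta^{-1}\partial_d(x_d^\theta)$, integration by parts, and Cauchy--Schwarz with the symmetric split of $x_d^\theta$. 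The two arguments rest on the same mechanism (your computation is essentially the standard proof of the cited one-dimensional result, fused with the substitution $|u|^{p/2}$), so the difference is one of packaging: the paper's version is shorter but relies on an external reference, while yours is fully self-contained and makes explicit the only delicate point, namely the chain rule for $\partial_d(|u|^p)$ when $1<p<2$, which your regularization $(u^2+\ep^2)^{p/2}$ handles correctly since the support of $u$ is compact in $\bR^d_+$ and $|u|^{p-2}u\,D_du$ is bounded by $|u|^{p-1}|D_du|$. Your remark that the conclusion is trivial when either factor in the Cauchy--Schwarz bound vanishes or is infinite closes the remaining edge cases.
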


\begin{proof}
  By one-dimensional Hardy's inequality (see e.g. Theorem 5.2 in the preface of \cite{K85}), for any $\theta\neq0$,
\begin{equation*}
  \int_0^\infty |v(r)|^2 r^{\theta-1} dr \leq \frac{4}{|\theta|^2} \int_0^\infty |v'(r)|^2 r^{\theta+1} dr.
\end{equation*}
We rewrite this inequality to include the case $\theta=0$ as follows;
\begin{equation*}
  \frac{|\theta|^2}{4}\int_0^\infty |v(r)|^2 r^{\theta-1} dr \leq \int_0^\infty |v'(r)|^2 r^{\theta+1} dr.
\end{equation*}
Then it remains to put $v(r)=|u(x',r)|^{p/2}$ and integrate both sides with respect to $x'$. The lemma is proved.
\end{proof}

We introduce two classes of simple coefficients.
 The following two assumptions correspond to the classes of $[\cdot]_{\rho,x_0}$-coefficients introduced in Assumptions \ref{ass_lead} $(\rho_0,\gamma_0)$ and \ref{ass_coeff} $(\rho_0,\gamma_0)$, respectively.

\begin{assumption} \label{ass_simple1}
\textbf{} 
   \begin{itemize}
    \item $a_0, a_{dd}$, and $c_0$ depend only on the same single variable, either $x_d$ or $t$,

  \item $a_{ij}$ depend only on $(t,x_d)$ for $(i,j)\neq (d,d)$,

  \item $b_i=\hat{b}_i=c=0$ for all $i$.
\end{itemize}
\end{assumption}

\begin{assumption} \label{ass_simple2}
\textbf{} 
   \begin{itemize}
    \item one of the following is satisfied:
    \begin{itemize}
      \item $a_{dd}, b_d, \hat{b}_d$, and $c$ are constant, and $a_0$ and $c_0$ depend only on $x_d$,

      \item $a_0, a_{dd}, b_d, \hat{b}_d, c$, and $c_0$ depend only on $t$,
    \end{itemize}

  \item $a_{ij}$ depend only on $(t,x_d)$ for $(i,j)\neq (d,d)$,

  \item $b_i$ and $\hat{b}_i$ depend only on $(t,x_d)$ for $i\neq d$,

  \item we have
      \begin{equation} \label{ratio}
    \frac{b_d}{a_{dd}}=n_b, \quad \frac{\hat{b}_{d}}{a_{dd}}=n_{\hat{b}}, \quad \frac{c}{a_{dd}}=n_c
  \end{equation}
  for some $n_b,n_{\hat{b}}, n_c\in \bR$.
\end{itemize}
\end{assumption}

Now we estimate zeroth order regularity of solutions to equations with simple coefficients.

\begin{lemma} \label{lem_zero_div}
Let $T\in(-\infty,\infty]$, $\theta\in\bR$, $\lambda\geq0$, and $p\in[2,\infty)$. Assume that $u\in C_c^\infty((-\infty,T]\times\bR^d_+)$ satisfies 
\begin{equation} \label{zero_div}
\sL_p u +\lambda c_0u = a_0 u_t - x_d^2 D_i(a_{ij}D_{j}u) + x_d b_iD_iu + x_dD_i(\hat{b}_iu) + cu +\lambda c_0 u = D_iF_i+f
\end{equation}
in $\Omega_T$, where $x_d^{-1}F,f\in \bL_{p,\theta}(T)$.

$(i)$ There exists a sufficiently large $\lambda_0=\lambda_0(d,p,\theta,\nu,K)\geq0$
such that under Assumption \ref{ass_simple1}, the following holds true.
 For any $\lambda\geq\lambda_0$, we have
\begin{align} \label{eq4111550}
  &\sup_{t\leq T} \int_{\bR^d_+} |u|^p(t,\cdot) x_d^{\theta-1} dx + \int_{\Omega_T} |u|^p x_d^{\theta-1} dxdt \nonumber
  \\
  &\leq N \int_{\Omega_T} \left(|x_d^{-1}F|^p + |u|^{p-1}|f|\right) x_d^{\theta-1} dxdt,
\end{align}
where $N=N(d,p,\theta,\nu,K)$.

$(ii)$ Let $n_b,n_{\hat{b}},n_c\in \bR$, and assume that the quadratic equation 
\begin{equation} \label{quad_div}
 z^2+(1+n_b+n_{\hat{b}})z-n_c=0
\end{equation}
has two distinct real roots $\alpha <\beta$.
Then under Assumption \ref{ass_simple2}, \eqref{eq4111550} holds for any $\lambda\geq0$ and $\alpha p<\theta <\beta p$, where the constant $N$ depends on $d,p,\theta,n_b,n_{\hat{b}},n_c,\nu$, and $K$.
\end{lemma}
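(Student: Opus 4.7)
The plan is to test \eqref{zero_div} against $\varphi = |u|^{p-2}u\,x_d^{\theta-1}\mathbf{1}_{(-\infty,s)}(t)$ for $s\leq T$, integrate over $\Omega_T$, integrate by parts in the spatial variables, and then take $\sup_{s\leq T}$. Since $u\in C_c^{\infty}$ and $p\geq 2$, $|u|^{p-2}u$ is $C^1$, so all manipulations are legitimate. In the case $a_0=a_0(t)$, I first divide the equation through by $a_0$: this preserves ellipticity (with a modified $\nu$), the variable-dependency structure in Assumptions \ref{ass_simple1}/\ref{ass_simple2}, and the ratios $n_b,n_{\hat b},n_c$ (since they are quotients with $a_{dd}$). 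After this reduction I may assume $a_0$ depends at most on $x_d$, so the time integral collapses to $\frac{1}{p}\int_{\bR^d_+} a_0|u(s,\cdot)|^p x_d^{\theta-1}\,dx$, from which the $\sup_{s\leq T}$ term in \eqref{eq4111550} follows.

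The integration by parts on $-x_d^2 D_i(a_{ij}D_ju)$ produces the coercive piece $(p-1)\iint a_{ij}D_iu D_ju\,|u|^{p-2}x_d^{\theta+1}\geq \nu(p-1)\iint |Du|^2|u|^{p-2}x_d^{\theta+1}$ together with a boundary-type term $(\theta+1)\iint a_{dj}D_ju\,|u|^{p-2}u\,x_d^{\theta}$. The $j<d$ contributions to the latter vanish after integration by parts in $x_j$, because $a_{dj}$ is independent of $x'$; the $j=d$ contribution, using that $a_{dd}$ is independent of $x_d$, becomes $-\frac{\theta(\theta+1)}{p}\iint a_{dd}|u|^p x_d^{\theta-1}$. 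The $b_i,\hat b_i$ terms with $i<d$ likewise vanish by integration by parts in $x_i$ (same reason), while the $b_d,\hat b_d,c$ terms combine, via the ratio condition \eqref{ratio}, into $\bigl[n_c-\frac{\theta}{p}(n_b+n_{\hat b})\bigr]\iint a_{dd}|u|^p x_d^{\theta-1}$.

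Applying Lemma \ref{lem_Hardy} (weighted Hardy) to a fraction $(1-\varepsilon)$ of the $a_{dd}$ piece of the coercive term yields an extra $(1-\varepsilon)(p-1)\frac{\theta^2}{p^2}a_{dd}\iint|u|^p x_d^{\theta-1}$. Writing $\tau=\theta/p$, the sum of all coefficients of $a_{dd}\iint|u|^p x_d^{\theta-1}$ equals, modulo $O(\varepsilon)$,
\[
-\bigl[\tau^{2}+(1+n_b+n_{\hat b})\tau-n_c\bigr] \;=\; -Q(\tau),
\]
where $Q(z)=z^2+(1+n_b+n_{\hat b})z-n_c$ is exactly the quadratic from the hypothesis. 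Under $\alpha p<\theta<\beta p$ one has $\alpha<\tau<\beta$, hence $Q(\tau)<0$, so this coefficient is strictly positive and provides the missing coercivity on $\iint|u|^p x_d^{\theta-1}$ for Part $(ii)$. The right-hand side is treated by integrating $D_iF_i$ by parts against $\varphi$: the $i=d$ piece produces a factor $x_d^{\theta-2}$ controlled by $|x_d^{-1}F_d|^p x_d^{\theta-1}$ via Young's inequality, while the remaining pieces and any cross-terms $|u|^{p-2}|F_i||Du|x_d^{\theta-1}$ absorb into the remaining $\varepsilon$-fraction of $\iint|Du|^2|u|^{p-2}x_d^{\theta+1}$. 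The $f$-term is simply left in the form $|u|^{p-1}|f|\,x_d^{\theta-1}$ as in \eqref{eq4111550}.

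For Part $(i)$ the quadratic structure is not available, but the same computation still produces a coefficient of $\iint|u|^p x_d^{\theta-1}$ that is bounded in terms of $d,p,\theta,\nu,K$ alone. Choosing $\lambda_0$ large enough, the lower bound $\lambda c_0 \geq \lambda K^{-1}$ makes the $\lambda c_0|u|^p$ term dominate this quantity and restore positivity. The principal obstacle in the proof is the bookkeeping in Steps 2--3: demonstrating that the $d$-direction contributions from the principal part, the lower-order terms, and the ratio condition assemble into precisely $-Q(\tau)$, and calibrating Hardy's inequality so that enough of the coercive term is retained to absorb $F$ and cross terms without destroying the sharp constant that determines the admissible range of $\theta$.
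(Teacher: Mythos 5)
Your overall strategy — testing with $|u|^{p-2}u\,x_d^{\theta-1}$, using weighted Hardy, and assembling the coefficients of $\int a_{dd}|u|^px_d^{\theta-1}$ into $-Q(\theta/p)$ — is exactly the paper's, and your bookkeeping of the boundary and lower-order contributions is correct. However, there are two concrete gaps.

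First, the step where you apply Hardy's inequality to ``a fraction $(1-\varepsilon)$ of the $a_{dd}$ piece of the coercive term'' to produce $(1-\varepsilon)(p-1)\frac{\theta^2}{p^2}\,a_{dd}\iint|u|^px_d^{\theta-1}$ is not justified as written. The quadratic form $a_{ij}\xi_i\xi_j$ does \emph{not} dominate $a_{dd}\xi_d^2$ plus a nonnegative remainder, because of the off-diagonal entries $a_{dj},a_{jd}$ ($j<d$); the only bound you have set up is $a_{ij}\xi_i\xi_j\geq\nu|\xi|^2$, and Hardy applied to that yields the constant $\nu(p-1)\frac{\theta^2}{p^2}$ rather than $a_{dd}(p-1)\frac{\theta^2}{p^2}$. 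Since $\nu\leq a_{dd}$ with strict inequality in general, the resulting quadratic in $\tau=\theta/p$ is not $Q(\tau)$ and the admissible range of $\theta$ shrinks, so part $(ii)$ in the full range $(\alpha p,\beta p)$ does not follow. The paper resolves this with the change of variables $y_d=x_d$, $y_i=x_i-\int_0^{x_d}\frac{a_{id}+a_{di}}{2a_{dd}}(t,r)\,dr$, which has Jacobian $1$, preserves $x_d$ (hence the weights), and transforms the symmetrized matrix so that $\tilde a_{dd}=a_{dd}$ and $\tilde a_{dj}=\tilde a_{jd}=0$; only then can one split off $a_{dd}(D_dv)^2$ exactly and apply Lemma \ref{lem_Hardy} with the constant $a_{dd}$ (pulled out of the $x$-integral since $a_{dd}$ is constant in $x$ in case $(ii)$). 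You need this decoupling, or an equivalent completion-of-the-square combined with a Hardy inequality along the tilted direction.

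Second, in part $(i)$ with $a_{dd}=a_{dd}(x_d)$ merely measurable, your plan integrates the term $\frac{\theta+1}{p}\iint a_{dd}\,D_d(|u|^p)\,x_d^{\theta}$ by parts in $x_d$, which is not possible since $a_{dd}$ need not be differentiable. The paper avoids this by first replacing $u$ with $x_d^\gamma u$, choosing $\gamma$ so that the coefficient $(2-2p)\gamma+\theta+1+n_b-(p-1)n_{\hat b}$ of the offending term vanishes (equation \eqref{eq5100023}); the same normalization is what lets the $\hat b_d$-term be rewritten purely through the ratio condition in case $(ii)$. Incorporating these two devices — the change of variables in $x'$ and the $x_d^\gamma$ normalization — would close the gaps; the rest of your outline (splitting the coercive term to absorb the $F$ cross-terms, taking $\lambda_0$ large in part $(i)$, and treating $a_0=a_0(t)$ by dividing through) matches the paper.
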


\begin{proof}
First, we prove $(ii)$.

We claim that it suffices to consider the case when
\begin{equation} \label{eq5100023}
  \theta+1+n_b-(p-1)n_{\hat{b}}=0.
\end{equation}
 Indeed, let $\gamma\in\bR$ and denote $v:=x_d^\gamma u$. Then $v$ satisfies
\begin{align} \label{eq4131657}
  &v_t-x_d^2 D_i(a_{ij}D_{j}v) + x_d(\gamma a_{di}+b_i)D_iv + x_dD_i((\gamma a_{id} + \hat{b}_i) v) \nonumber
  \\
  &\quad + (c-\gamma(\gamma+1)a_{dd}-\gamma b_d-\gamma \hat{b}_d)v + \lambda c_0 v \nonumber
  \\
  &= D_i(x_d^\gamma F_i)-\gamma x_d^{\gamma-1}F_d + x_d^\gamma f.
\end{align}
Here, the quadratic polynomial corresponding to equation \eqref{eq4131657} is
\begin{equation*}
  z^2 + (1+n_b+n_{\hat{b}}+2\gamma)z -(n_c-\gamma(\gamma+1)-\gamma n_b-\gamma n_{\hat{b}})=0,
\end{equation*}
and its two (distinct) roots are $\alpha-\gamma$ and $\beta-\gamma$. Thus, it suffices to consider the modified equation \eqref{eq4131657} for $(\alpha-\gamma)p<\theta-\gamma p<(\beta-\gamma)p$.
Notice that for \eqref{eq4131657}, the corresponding value of \eqref{eq5100023} is $(2-2p)\gamma+\theta+1+n_b-(p-1)n_{\hat{b}}$.
Then we choose $\gamma\in\bR$ so that $(2-2p)\gamma+\theta+1+n_b-(p-1)n_{\hat{b}}=0$, which proves the claim.

Now, assume that \eqref{eq5100023} holds. We first consider the case $a_{0}=a_{0}(x_d)$.
  Let us test \eqref{zero_div} by $|u|^{p-2}ux_d^{\theta-1}$ in $\Omega_s$ where $s\leq T$. Then
  \begin{align} \label{eq4132035}
    &\int_{\Omega_s} a_0u_t |u|^{p-2}u x_d^{\theta-1} dxdt + \int_{\Omega_s} a_{ij} D_i(x_d^{\theta+1} |u|^{p-2}u) D_{j}u dxdt \nonumber
    \\
    &+ \int_{\Omega_s} b_i|u|^{p-2}u D_{i} u x_d^{\theta} dxdt - \int_{\Omega_T} \hat{b}_i u D_{i}(|u|^{p-2}u x_d^{\theta}) dxdt + \int_{\Omega_s} c |u|^p x_d^{\theta-1} dxdt \nonumber
    \\
    &+ \lambda \int_{\Omega_s} c_0 |u|^p x_d^{\theta-1} dxdt = - \int_{\Omega_s} D_i (x_d^{\theta-1} |u|^{p-2}u) F_i dxdt + \int_{\Omega_s} |u|^{p-2}uf x_d^{\theta-1} dxdt.
  \end{align}
By integration by parts,
\begin{align} \label{eq6281608}
  \int_{\Omega_s} a_0u_t |u|^{p-2}u x_d^{\theta-1} dxds &= \int_{\bR_+^d} \int_{-\infty}^s \frac{a_0}{p} (|u|^p)_t x_d^{\theta-1} dxdt \nonumber
  \\
  &= \frac{1}{p}\int_{\bR_+^d} a_0|u|^p(s,x) x_d^{\theta-1} dx.
\end{align}
For the second term on the left-hand side of \eqref{eq4132035},
\begin{align} \label{eq5101401}
  &\int_{\Omega_s} a_{ij} D_i(x_d^{\theta+1} |u|^{p-2}u) D_{j}u dxdt \nonumber
  \\
  &= (p-1) \int_{\Omega_s} a_{ij} |u|^{p-2} D_iu D_ju x_d^{\theta+1} dxdt + (\theta+1) \int_{\Omega_s} a_{dj} |u|^{p-2}u D_{j}u x_d^{\theta} dxdt \nonumber
  \\
  &= (p-1) \int_{\Omega_s} a_{ij} |u|^{p-2} D_{i}u D_ju x_d^{\theta+1} dxdt + \frac{\theta+1}{p} \int_{\Omega_s} a_{dd} D_d(|u|^p) x_d^{\theta} dxdt \nonumber
  \\
  &= (p-1) \int_{\Omega_s} \frac{a_{ij}+a_{ji}}{2} |u|^{p-2} D_{i}u D_ju x_d^{\theta+1} dxdt \nonumber
  \\
  &\quad + \frac{\theta+1}{p} \int_{\Omega_s} a_{dd} D_d(|u|^p) x_d^{\theta} dxdt =: I+J.
\end{align}
Let us consider a change of variables $y=y(t,x)$ where
\begin{equation*}
  y_d=x_d, \quad y_i = -\int_0^{x_d} \frac{a_{id}+a_{di}}{2a_{dd}}(t,r)dr + x_i, \quad i=1,\dots,d-1.
\end{equation*}
Then $\partial y_i/\partial x_i=1$ for all $i=1,\dots,d$,
\begin{equation*}
  \frac{\partial y_i}{\partial x_d}=-\frac{a_{id}+a_{di}}{2a_{dd}}(t,x_d), \quad i=1,\dots,d-1,
\end{equation*}
and $\partial y_i/\partial x_j=0$ when $i\neq j$ and $j=1,\dots,d-1$.
This implies that $y=y(t,\cdot)$ is a one-to-one Lipschitz map from $\bR_+^d$ to $\bR_+^d$ whose Jacobian is $1$. Then, by letting $v(t,y)=u(t,x)$, for $0<\kappa<p-1$,
\begin{align*}
  I &= (p-1) \int_{\Omega_s} \tilde{a}_{kl}|v|^{p-2} D_{k}v D_lv y_d^{\theta+1} dydt
  \\
  &= (p-1-\kappa) \int_{\Omega_s} \tilde{a}_{kl}|v|^{p-2} D_{k}v D_lv y_d^{\theta+1} dydt + \kappa \int_{\Omega_s} \tilde{a}_{kl}|v|^{p-2} D_{k}v D_lv y_d^{\theta+1} dydt
  \\
  &=:I_1+I_2,
\end{align*}
where
\begin{equation*}
  \tilde{a}_{kl}=\sum_{i,j=1}^d \frac{a_{ij}+a_{ji}}{2} \frac{\partial y_k}{\partial x_i} \frac{\partial y_l}{\partial x_j}.
\end{equation*}
Here, one can show that
\begin{equation*}
  \tilde{a}_{dd}=a_{dd}, \quad \tilde{a}_{dl}=\tilde{a}_{kd}=0, \quad k,l=1,\dots,d-1,
\end{equation*}
and there is $\tilde{\nu}>0$ depending on $\nu$ and $d$ so that
\begin{equation} \label{eq4141711}
  \tilde{a}_{kl}\xi_i \xi_j \geq \tilde{\nu}|\xi|^2.
\end{equation}
Since $a_{dd}$ is constant, by Lemma \ref{lem_Hardy},
\begin{align} \label{eq4141712}
  I_1 &\geq (p-1-\kappa) \int_{\Omega_s} a_{dd} |v|^{p-2} (D_{d}v)^2 y_d^{\theta+1} dydt \nonumber
\\
  &\geq \frac{(p-1-\kappa) \theta^2}{p^2} \int_{\Omega_s} a_{dd} |v|^p y_d^{\theta-1} dydt = \frac{(p-1-\kappa) \theta^2}{p^2} \int_{\Omega_s} a_{dd} |u|^p x_d^{\theta-1} dxdt.
\end{align}
For $I_2$, by \eqref{eq4141711},
\begin{equation} \label{eq4141713}
  I_2 \geq \kappa\tilde{\nu} \int_{\Omega_s} |v|^{p-2} |D_{y}v|^2 y_d^{\theta+1} dydt \geq N\kappa \int_{\Omega_s} |u|^{p-2} |D_{x}u|^2 x_d^{\theta+1} dxdt,
\end{equation}
where $N>0$ is independent of $\kappa$.
Due to \eqref{eq4141712} and \eqref{eq4141713},
\begin{align} \label{eq4141715}
  &I=I_1+I_2 \nonumber
  \\
  &\geq \frac{(p-1-\kappa)\theta^2}{p^2} \int_{\Omega_s} a_{dd} |u|^p x_d^{\theta-1} dxdt + N\kappa \int_{\Omega_s} |u|^{p-2} |D_{x}u|^2 y_d^{\theta+1} dxdt.
\end{align}
Next, we consider the term related to $b_i$. Since $b_i=b_i(t,x_d)$ for $i\neq d$, and $u\in C_c^\infty((-\infty,T]\times \bR^d_+)$,
\begin{align} \label{eq5101400}
  \int_{\Omega_s} b_i |u|^{p-2}u D_{i} u x_d^{\theta+1} dxdt &= \int_{\Omega_s} \frac{b_i}{p} D_i(|u|^p) x_d^{\theta+1} dxdt \nonumber
  \\
  &= \int_{\Omega_s} \frac{b_d}{p} D_d(|u|^p) x_d^{\theta+1} dxdt.
\end{align}
Similarly,
\begin{align} \label{eq5101402}
  &- \int_{\Omega_s} \hat{b}_i u D_{i}(|u|^{p-2}u x_d^{\theta}) dxdt \nonumber
  \\
  &= - (p-1) \int_{\Omega_s} \hat{b}_i  |u|^{p-2}u D_{i}u x_d^{\theta} dxdt - \theta \int_{\Omega_s} \hat{b}_d |u|^p x_d^{\theta-1} dxdt \nonumber
  \\
&= - \frac{p-1}{p} \int_{\Omega_s} \hat{b}_d  D_d(|u|^p) x_d^{\theta} dxdt - \theta \int_{\Omega_s} \hat{b}_d |u|^p x_d^{\theta-1} dxdt.
\end{align}
Thus, if we consider $J$ in \eqref{eq5101401} along with \eqref{eq5101400} and \eqref{eq5101402}, then by \eqref{ratio} and \eqref{eq5100023},
\begin{align} \label{eq5101418}
  &J + \int_{\Omega_s} b_i |u|^{p-2}u D_{i} u x_d^{\theta+1} dxdt - \int_{\Omega_s} \hat{b}_i u D_{i}(|u|^{p-2}u x_d^{\theta}) dxdt \nonumber
  \\
  &= \frac{\theta+1+n_b-(p-1)n_{\hat{b}}}{p} \int_{\Omega_s} a_{dd}  D_d(|u|^p) x_d^{\theta} dxdt - \theta \int_{\Omega_s} \hat{b}_d |u|^p x_d^{\theta-1} dxdt \nonumber
  \\
  &= - \theta \int_{\Omega_s} \hat{b}_d |u|^p x_d^{\theta-1} dxdt \nonumber
  \\
  &= -\frac{\theta((\theta+1)+n_b+n_{\hat{b}})}{p} \int_{\Omega_s} a_{dd} |u|^p x_d^{\theta-1} dxdt.
\end{align}
Here we note that \eqref{eq5100023} is used for both the second and third inequalities above.

Now we deal with the terms on the right-hand side of \eqref{eq4132035}.
By using the condition $p\geq 2$ and Young's inequality, for any $\kappa_1,\kappa_2>0$,
\begin{align} \label{eq4260010}
  -& \int_{\Omega_s} D_i (x_d^{\theta-1} |u|^{p-2}u) F_i dxdt \nonumber
  \\
  &= -(p-1)\int_{\Omega_s} |u|^{p-2} D_iu F_i x_d^{\theta-1} dxdt -\theta \int_{\Omega_s} |u|^{p-2}u F_d x_d^{\theta-2} dxdt \nonumber
  \\
  &\leq \kappa_1 \int_{\Omega_s} |u|^{p-2} |D_xu|^2 x_d^{\theta+1} dxdt + \kappa_2 \int_{\Omega_s} |u|^p x_d^{\theta-1} dxdt \nonumber
  \\
  &\quad + N(\kappa_1,\kappa_2) \int_{\Omega_s} |x_d^{-1}F|^p x_d^{\theta-1} dxdt.
\end{align}
By combining \eqref{eq7151434}, \eqref{eq4132035}-\eqref{eq5101401}, \eqref{eq4141715}, and \eqref{eq5101418}-\eqref{eq4260010},
\begin{align} \label{eq11102104}
  &\frac{1}{p}\int_{\bR_+^d} a_0 |u|^p(s,x) x_d^{\theta-1} dx + N\kappa \int_{\Omega_s} |u|^{p-2} |D_{x}u|^2 x_d^{\theta+2} dxdt \nonumber
  \\
  &\quad + \left( \lambda\nu K^{-1} + n_c - \frac{1+n_b+n_{\hat{b}}}{p}\theta - \frac{1+\kappa}{p^2}\theta^2 \right) \int_{\Omega_s} a_{dd} |u|^p x_d^{\theta-1} dxdt \nonumber
  \\
  &\leq \kappa_1 \int_{\Omega_s} |u|^{p-2} |D_xu|^2 x_d^{\theta+1} dxdt + \kappa_2 \int_{\Omega_s} |u|^p x_d^{\theta-1} dxdt \nonumber
  \\
  &\quad + N(\kappa_1,\kappa_2,n_b,n_{\hat{b}},n_c,p) \int_{\Omega_s} |x_d^{-1}F|^p x_d^{\theta-1} dxdt + N \int_{\Omega_s} |u|^{p-1} |f| x_d^{\theta-1} dxdt.
\end{align}
Since $\alpha$ and $\beta$ are two roots of \eqref{quad_div}, one can take a small $\kappa=\kappa(\theta)>0$ so that
\begin{align*}
  &\lambda\nu K^{-1} + n_c - \frac{1+n_b+n_{\hat{b}}}{p}\theta - \frac{1+\kappa}{p^2}\theta^2
  \\
  &\geq n_c - \frac{1+n_b+n_{\hat{b}}}{p}\theta - \frac{\theta^2}{p^2} -\frac{\kappa}{p^2}\theta^2>0.
\end{align*}
Thus, by taking appropriate $\kappa_1,\kappa_2>0$, we have
\begin{align} \label{eq5221631}
   \int_{\bR_+^d} |u|^p(s,x) x_d^{\theta-1} dx + \int_{\Omega_s} |u|^p x_d^{\theta-1} dxdt \leq N \int_{\Omega_s} \left(|x_d^{-1}F|^p + |u|^{p-1}|f|\right) x_d^{\theta-1} dxdt.
\end{align}
 Now we take the supremum over s on both sides of \eqref{eq5221631}, which yields \eqref{eq4111550}.

For the case when $a_{0}=a_{0}(t)$, test \eqref{zero_div} by $\frac{1}{a_0} |u|^{p-2}ux_d^{\theta-1}$, and repeat the above argument.

Next, we deal with $(i)$.
Let us consider $x_d^\gamma u$ instead of $u$, where $(2-2p)\gamma+\theta+1=0$. By \eqref{eq4131657}, we also have the lower-order coefficients satisfying \eqref{ratio} with $n_b=n_{\hat{b}}=\gamma$ and $n_c=-\gamma(\gamma+1)$. Then, due to \eqref{eq5100023}, we can repeat the proof of $(ii)$. One difference is that, instead of \eqref{eq4141712}, we use
\begin{align*}
  I_1 &\geq (p-1-\kappa) \int_{\Omega_s} a_{dd} |v|^{p-2} (D_{d}v)^2 y_d^{\theta+1} dydt \nonumber
  \\
  &\geq (p-1-\kappa)\nu \int_{\Omega_s} |v|^{p-2} (D_{d}v)^2 y_d^{\theta+1} dydt \nonumber
  \\
  &\geq \frac{(p-1-\kappa)\nu \theta^2}{p^2} \int_{\Omega_s} |v|^p y_d^{\theta-1} dydt \nonumber
\\
  &\geq \frac{(p-1-\kappa)\nu^2 \theta^2}{p^2} \int_{\Omega_s} a_{dd} |v|^p y_d^{\theta-1} dydt = \frac{(p-1-\kappa)\nu^2 \theta^2}{p^2} \int_{\Omega_s} a_{dd} |u|^p x_d^{\theta-1} dxdt,
\end{align*}
which follows from \eqref{ellip} and Lemma \ref{lem_Hardy}.
Then from the same argument above, instead of \eqref{eq11102104}, for any $\kappa,\kappa_1,\kappa_2>0$,
\begin{align*}
  &\frac{1}{p}\int_{\bR_+^d} a_0 |u|^p(s,x) x_d^{\theta-1} dx + N\kappa \int_{\Omega_s} |u|^{p-2} |D_{x}u|^2 x_d^{\theta+2} dxdt
  \\
  &\quad + \left( \lambda\nu K^{-1} + n_c - \frac{1+n_b+n_{\hat{b}}}{p}\theta + \frac{(p-1-\kappa)\nu^2-p}{p^2}\theta^2 \right) \int_{\Omega_s} a_{dd} |u|^p x_d^{\theta-1} dxdt
  \\
  &\leq \kappa_1 \int_{\Omega_s} |u|^{p-2} |D_xu|^2 x_d^{\theta+1} dxdt + \kappa_2 \int_{\Omega_s} |u|^p x_d^{\theta-1} dxdt
  \\
  &\quad + N(\kappa_1,\kappa_2) \int_{\Omega_s} |x_d^{-1}F|^p x_d^{\theta-1} dxdt + N \int_{\Omega_s} |u|^{p-1} |f| x_d^{\theta-1} dxdt.
\end{align*}
Here, we note that $n_b,n_{\hat{b}}$, and $n_c$ may not be zero, and they depend on $\theta$. Thus, we choose appropriate $\kappa,\kappa_1,\kappa_2$, and take $\lambda_0=\lambda_0(d,p,\nu,\theta,K)\geq0$ such that for $\lambda\geq\lambda_0$,
\begin{align*}
  &\lambda\nu K^{-1} + n_c - \frac{1+n_b+n_{\hat{b}}}{p}\theta + \frac{(p-1-\kappa)\nu^2-p}{p^2}\theta^2>0.
\end{align*}
Thus we obtain \eqref{eq5221631}, and the lemma is proved. 
\end{proof}

By repeating the proof of Lemma \ref{lem_zero_div} with $u\in C_c^\infty([0,T]\times \bR^d_+)$, we have the following result.

\begin{lemma} \label{lem_zero_initial}
Let $T\in(0,\infty]$, $\theta\in\bR$, $\lambda\geq0$, and $p\in[2,\infty)$. Assume that $u\in C_c^\infty([0,T]\times\bR^d_+)$ satisfies 
\begin{equation*}
\sL_pu +\lambda c_0 u = D_iF_i+f
\end{equation*}
in $(0,T)\times\bR^d_+$, where $x_d^{-1}F,f\in \bL_{p,\theta}(0,T)$.

$(i)$ There exists a sufficiently large $\lambda_0=\lambda_0(d,p,\theta,\nu,K)\geq0$
such that under Assumption \ref{ass_simple1}, the following holds true. For any $\lambda\geq\lambda_0$, we have
\begin{align} \label{eq11102158}
  &\sup_{t\leq T} \int_{\bR^d_+} |u|^p(t,\cdot) x_d^{\theta-1} dx + \int_{\Omega_T} |u|^p x_d^{\theta-1} dxdt \nonumber
  \\
  &\leq N \int_{\Omega_T} |u(0,\cdot)|^p x_d^{\theta-1} dx + N \int_{\Omega_T} \left(|x_d^{-1}F|^p + |u|^{p-1}|f|\right) x_d^{\theta-1} dxdt,
\end{align}
where $N=N(d,p, \theta,\nu,K)$.

$(ii)$ Let $n_b,n_{\hat{b}},n_c\in\bR$, and assume the quadratic equation \eqref{quad_div} has two distinct real roots $\alpha <\beta$.
Then under Assumption \ref{ass_simple2}, \eqref{eq11102158} holds for any $\lambda\geq0$ and $\alpha p<\theta <\beta p$, where the constant $N$ depends on $d,p,\theta,n_b,n_{\hat{b}},n_c,\nu$, and $K$.
\end{lemma}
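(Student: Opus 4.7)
The plan is to mimic the proof of Lemma \ref{lem_zero_div} essentially verbatim, tracking the single point at which the nonvanishing initial datum alters the argument. For part $(ii)$, I would reduce to the normalization $\theta+1+n_b-(p-1)n_{\hat{b}}=0$ by the same substitution $v=x_d^\gamma u$, which preserves the ratio condition \eqref{ratio} and shifts both roots of the quadratic \eqref{quad_div} by $-\gamma$; for part $(i)$, I would make the same rescaling but with $n_b=n_{\hat{b}}=\gamma$ and $n_c=-\gamma(\gamma+1)$ induced by \eqref{eq4131657}, so that Hardy's inequality applies.

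Next, I would test the equation with $|u|^{p-2}u\, x_d^{\theta-1}$ if $a_0=a_0(x_d)$ (and with $a_0^{-1}|u|^{p-2}u\, x_d^{\theta-1}$ if $a_0=a_0(t)$) over $\Omega_{0,s}:=(0,s)\times\bR^d_+$ for any $s\leq T$. The only place where the argument genuinely deviates from Lemma \ref{lem_zero_div} is the time integration by parts: since $u(0,\cdot)$ is no longer zero, instead of \eqref{eq6281608} one obtains
\begin{equation*}
  \int_{\Omega_{0,s}} a_0 u_t |u|^{p-2}u\, x_d^{\theta-1} \, dxdt = \frac{1}{p}\int_{\bR^d_+} a_0 |u(s,\cdot)|^p x_d^{\theta-1}\, dx - \frac{1}{p}\int_{\bR^d_+} a_0 |u(0,\cdot)|^p x_d^{\theta-1}\, dx,
\end{equation*}
and the initial boundary term is moved to the right-hand side, producing precisely the $\int_{\bR^d_+}|u(0,\cdot)|^p x_d^{\theta-1}\, dx$ contribution in \eqref{eq11102158} after using $K^{-1}\leq a_0\leq K$.

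Every other step carries over unchanged: the Lipschitz change of variables $y=y(t,x)$ diagonalizes the highest-order part, giving the coercive lower bound \eqref{eq4141715}; the lower-order terms involving $b_i,\hat{b}_i$ and $c$ combine via the ratio condition and the normalization \eqref{eq5100023} to yield \eqref{eq5101418}; Young's inequality on the right-hand side produces the bound \eqref{eq4260010}; and the coefficient of $\int a_{dd}|u|^p x_d^{\theta-1}$ on the left is made strictly positive either by choosing $\alpha p<\theta<\beta p$ together with small $\kappa$ (for part $(ii)$), or by selecting $\lambda\geq\lambda_0$ sufficiently large in the analogue of the $a_0=a_0(t)$ computation (for part $(i)$). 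After absorbing $\kappa_1\int |u|^{p-2}|D_xu|^2 x_d^{\theta+1}\, dxdt$ and $\kappa_2\int |u|^p x_d^{\theta-1}\, dxdt$ into the left-hand side and taking the supremum over $s\in[0,T]$, we arrive at \eqref{eq11102158}.

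I do not anticipate any genuine obstacle here, since the structural identities (change of variables, Hardy's inequality, ratio condition, choice of $\gamma$) are all preserved once the only boundary term at $t=0$ is properly accounted for; the care required is simply bookkeeping to ensure that the $|u(0,\cdot)|^p$ term appears with the correct sign and constant depending only on the stated parameters.
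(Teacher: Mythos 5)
Your proposal is correct and matches the paper's proof: the paper likewise simply repeats the argument of Lemma \ref{lem_zero_div} on $(0,T)$, with the only modification being the time integration by parts producing the extra term $-\frac{1}{p}\int_{\bR^d_+} a_0|u(0,\cdot)|^p x_d^{\theta-1}\,dx$, which is moved to the right-hand side using $K^{-1}\leq a_0\leq K$. All other steps (change of variables, Hardy's inequality, ratio condition, choice of $\gamma$ and of $\lambda_0$) carry over unchanged exactly as you describe.
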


\begin{proof}
  One just needs to repeat the proof of Lemma \ref{lem_zero_div} with $u\in C_c^\infty([0,T]\times \bR^d_+)$. We remark that instead of \eqref{eq6281608}, we use
  \begin{align*}
  &\int_0^s\int_{\bR^d_+} a_0u_t |u|^{p-2}u x_d^{\theta-1} dxdt 
  \\
  &= \frac{1}{p}\int_{\bR_+^d} a_0|u|^p(s,x) x_d^{\theta-1} dx - \frac{1}{p}\int_{\bR_+^d} a_0|u|^p(0,x) x_d^{\theta-1} dx
\end{align*}
for any $s\in[0,T]$.
The lemma is proved.
\end{proof}

Next, we obtain higher-order estimates for solutions to equations with general coefficients. We first introduce some function spaces in the whole space $(S,T)\times \bR^d$ for $-\infty\leq S<T\leq \infty$.
For a given weight $\omega(t)$ on $(S,T)$, we write $L_{p,\omega}(S,T):=L_p((S,T)\times\bR^d,\omega(t) dtdx)$. In the case when $S=-\infty,$ we write $L_{p,\omega}(T):=L_{p,\omega}(-\infty,T)$. If $h,D_xh \in L_{p,\omega}(T)$ and $a_0h_t \in L_{p}((S,T),\omega dt; H_p^{-1}(\bR^d))$, then we say $h\in\cH_{p,\omega}^1(S,T)$. We also denote $\cH_{p,\omega}^1(T)=\cH_{p,\omega}^1(-\infty,T)$.

\begin{lemma} \label{lem_high_div}
Let $\rho_0\in(1/2,1)$, $T\in(-\infty,\infty]$, $\lambda\geq0$, $p\in(1,\infty)$, $\theta\in \bR$, $K_0\geq1$, and $[\omega]_{A_p}\leq K_0$. Assume that $u\in C_c^\infty((-\infty,T]\times\bR^d_+)$ satisfies 
\begin{equation*}
\sL_pu +\lambda c_0 u = D_iF_i+f
\end{equation*}
in $\Omega_T$, where $f,x_d^{-1}F\in \bL_{p,\theta,\omega}(T)$.
Then there exists a sufficiently small $\gamma_0>0$ depending only on $d,p,\nu$, and $K_0$ such that under Assumption \ref{ass_lead} $(\rho_0,\gamma_0)$, we have
\begin{align} \label{eq4151444}
  &(1+\sqrt{\lambda}) \|u\|_{\bL_{p,\theta,\omega}(T)} + \|x_dD_xu\|_{\bL_{p,\theta,\omega}(T)} \nonumber
  \\
  &\leq N \left( \frac{1}{1+\sqrt{\lambda}}\|f\|_{\bL_{p,\theta,\omega}(T)} + \|x_d^{-1}F\|_{\bL_{p,\theta,\omega}(T)} + \|u\|_{\bL_{p,\theta,\omega}(T)}\right),
\end{align}
where $N=N(d,p,\theta,\nu,K,K_0)$.
\end{lemma}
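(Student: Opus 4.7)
The plan is to follow the localization and scaling argument of \cite{Khalf}. By the equivalent norm \eqref{equivnorm} integrated in time against $\omega$,
\begin{equation*}
\|u\|_{\bH_{p,\theta,\omega}^1(T)}^p \approx \sum_{m\in\bZ} e^{m(\theta+d-1)} \|v_m\|_{\cH^1_{p,\omega}(T)}^p,
\end{equation*}
where $v_m(t,x):=u(t,e^m x)\zeta(x_d)$ and $\zeta\in C_c^\infty(\bR_+)$ is the partition-of-unity function from \eqref{eq716908}. It thus suffices to prove, uniformly in $m\in\bZ$,
\begin{equation*}
(1+\sqrt{\lambda})\|v_m\|_{L_{p,\omega}(T)} + \|Dv_m\|_{L_{p,\omega}(T)} \leq N\Bigl(\|G_m\|_{L_{p,\omega}(T)} + \tfrac{1}{1+\sqrt{\lambda}}\|g_m\|_{L_{p,\omega}(T)}\Bigr),
\end{equation*}
with $G_m,g_m$ whose weighted $\ell^p$ sums in $m$ recover the right-hand side of \eqref{eq4151444}.

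Next, the natural scaling $y=e^m x$ (time unchanged) is tailored to the degeneracy: a direct computation shows $y_d^2 D_{y_i}(a_{ij}(t,y)D_{y_j}u(t,y))|_{y=e^m x} = x_d^2 D_{x_i}(\tilde a_{ij}(t,x)D_{x_j}U_m(t,x))$ with $\tilde a_{ij}(t,x):=a_{ij}(t,e^m x)$ and $U_m(t,x):=u(t,e^m x)$, with analogous formulas for the lower-order drift terms, while the forcing $D_{y_i}F_i$ acquires a factor $e^{-m}$. Multiplying the rescaled equation for $U_m$ by $\zeta(x_d)$ and commuting $\zeta$ past the derivatives yields a divergence-form equation on $\bR^d$ for $v_m=U_m\zeta$ of the same structure, with right-hand side $D_i G_{m,i}+g_m$ where $G_{m,i}$ collects $e^{-m}\zeta\tilde F_i$ and commutators of the form $x_d^2 \tilde a_{ij}U_m D_j\zeta$, and $g_m$ collects $\zeta\tilde f$, commutators $x_d^2 \tilde a_{ij}D_j\zeta\,D_i U_m$, and lower-order terms involving $D_d\zeta$. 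Since $\mathrm{supp}(\zeta)\Subset\bR_+$, on $\mathrm{supp}(v_m)$ the factor $x_d$ is bounded above and below by absolute constants, so the principal part $-x_d^2 D_i(\tilde a_{ij}D_j\cdot)$ is uniformly elliptic there; extending the coefficients outside $\mathrm{supp}(\zeta)$ to preserve uniform ellipticity and the measurable-in-$(t,x_d)$ plus small-BMO-in-$x'$ structure (possible because Remark \ref{rem5101542} guarantees that Assumption \ref{ass_lead} $(\rho_0,\gamma_0)$ lifts uniformly across dyadic shells $y_d\sim e^m$) reduces the problem to a nondegenerate parabolic equation on $\bR^d$ of standard type.

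I would then invoke the weighted $L_p$ solvability theory for nondegenerate parabolic equations in the whole space with coefficients measurable in $(t,x_d)$ except $a_{dd}$ measurable in $t$ or $x_d$, having small BMO in the remaining spatial directions, and with $A_q$ weight in time (see, e.g., \cite{DK18,D20}). This provides a BMO threshold $\gamma_0=\gamma_0(d,p,\nu,K_0)$ and the desired $L_{p,\omega}$-estimate for $v_m$ with $N$ independent of $m$. Raising to the $p$-th power, multiplying by $e^{m(\theta+d-1)}$, summing over $m\in\bZ$, and applying \eqref{equivnorm} in reverse on the left reproduces $(1+\sqrt{\lambda})\|u\|_{\bL_{p,\theta,\omega}(T)}+\|x_d D_x u\|_{\bL_{p,\theta,\omega}(T)}$. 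On the right-hand side, the factor $e^{-m}$ on $\tilde F_i$ combines with $e^{m(\theta+d-1)}$ and with $x_d\approx e^m$ on $\mathrm{supp}(\zeta)$ to yield $\|x_d^{-1}F\|_{\bL_{p,\theta,\omega}(T)}$, the term $\zeta\tilde f$ yields $\tfrac{1}{1+\sqrt{\lambda}}\|f\|_{\bL_{p,\theta,\omega}(T)}$, and the commutator terms, pointwise controlled by $(|U_m|+|DU_m|)\mathbf{1}_{\mathrm{supp}(D\zeta)}$, sum to $\|u\|_{\bL_{p,\theta,\omega}(T)}$ on the right-hand side (the $DU_m$ commutator is rewritten via $DU_m=(Dv_m-U_m D\zeta)/\zeta$ on $\mathrm{supp}(D\zeta)\subset\mathrm{supp}(\zeta)$ and absorbed using smallness of $\gamma_0$ or a standard iteration on $\zeta$).

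The main obstacle is the bookkeeping of the various scaling factors so that the $e^{-m}$ arising on $\tilde F_i$, the weight $e^{m(\theta+d-1)}$ in the norm decomposition, and the heuristic $x_d\approx e^m$ on $\mathrm{supp}(\zeta)$ together reproduce precisely the norm $\|x_d^{-1}F\|_{\bL_{p,\theta,\omega}(T)}$ of the statement (rather than some other weighted norm of $F$); a secondary technical point is that Assumption \ref{ass_lead} must be formulated at every dyadic scale $\rho\leq\rho_0 x_{0d}$, not at a single scale, precisely so that the small-BMO hypothesis of the cited nondegenerate theorem is met uniformly in $m\in\bZ$.
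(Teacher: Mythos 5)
Your overall architecture matches the paper's: localize to dyadic shells in $x_d$, rescale so that the operator becomes uniformly nondegenerate on the support of the cut-off, invoke the weighted $\cH^1_{p,\omega}$-theory for nondegenerate divergence-form equations with partially BMO coefficients, and resum using the norm equivalence \eqref{equivnorm}. (The paper uses a continuous version of this decomposition, integrating over a scaling parameter $r\in(0,\infty)$ against $r^{-\theta-d}\,dr$ via the identity \eqref{eq4152140}, rather than your discrete sum over $m\in\bZ$; this difference is cosmetic.)

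There is, however, a genuine gap in how you treat the first-order commutator term $x_d^2\tilde a_{ij}D_j\zeta\,D_iU_m$ sitting inside $g_m$. After resummation this term produces $\|x_dD_xu\|_{\bL_{p,\theta,\omega}(T)}$ -- a left-hand-side quantity -- multiplied by whatever constant the nondegenerate estimate attaches to $\|g_m\|$. Your claimed bound carries the prefactor $\tfrac{1}{1+\sqrt{\lambda}}$ on $\|g_m\|$, which for $\lambda=0$ (or small $\lambda$) is $O(1)$, so the term cannot be absorbed; moreover the standard nondegenerate estimate requires $\lambda$ bounded away from zero to control the zeroth-order term on an infinite time interval, so for $\lambda=0$ the estimate you invoke is not even available. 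Your two proposed fixes do not work: dividing by $\zeta$ on $\mathrm{supp}(D\zeta)$ fails where $\zeta$ vanishes, and smallness of $\gamma_0$ controls the BMO seminorm of the coefficients, not the size of the commutator. The paper's resolution is to add $\bar\lambda c_{0,r}v$ to both sides, apply the nondegenerate estimate with parameter $\lambda+\bar\lambda\geq\bar\lambda_0>0$ so that $g$ enters with the factor $(\lambda+\bar\lambda)^{-1/2}$, and then choose $\bar\lambda$ large to make the resulting $N\bar\lambda^{-1/2}\|x_dD_xu\|$ absorbable, at the harmless cost of $N(1+\sqrt{\bar\lambda})\|u\|$ on the right (which is exactly why $\|u\|_{\bL_{p,\theta,\omega}(T)}$ appears in \eqref{eq4151444}). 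Without this device, or an equivalent one, your argument does not close.
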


\begin{proof}
  For any function $h$ on $\Omega_T$, we denote $h_r(t,x):=h(t,x/r)$. For instance,
\begin{equation*}
  u_r(t,x)=u(t,x/r), \quad a_{ij,r}(t,x)=a_{ij}(t,x/r).
\end{equation*}
Let $\zeta\in C_0^\infty((2,3))$ be a standard non-negative cut-off function.
   Then for any $\gamma\in\bR$ and a function $h$ on $\Omega_T$,
  \begin{align} \label{eq4152140}
    \int_0^\infty \left( \int_{\Omega_{T}} |D_d^j\zeta(x_d)h_r(t,x)|^p \omega(t) dxdt \right) r^\gamma dr &= N_j \int_{\Omega_T} |h(t,x)|^p x_d^{-\gamma-d-1} \omega(t) dxdt,
  \end{align}
  where 
  \begin{equation*}
    N_j:=\int_0^\infty |D_s^j\zeta(s)|^p s^{\gamma+d} ds, \quad j=0,1.
  \end{equation*}

  Now we let $v(t,x):=\zeta(x_d)u_r(t,x) \in \cH_{p,\omega}^{1}(T)$. Then for $\bar{\lambda}\geq0$ which will be specified below, $v$ satisfies
  \begin{equation*}
    a_{0,r}v_t- D_i(x_d^2a_{ij,r}D_{j}v)+ (\lambda+\bar{\lambda})c_{0,r} v = D_iG_i + g
  \end{equation*}
  in $(-\infty,T)\times \bR^d$, where
\begin{align} \label{eq4152142}
  G_i:=r\zeta F_{i,r} - x_d \hat{b}_{i,r}\zeta u_r - x_d^2 a_{id,r}\zeta' u_r
\end{align}
and
  \begin{align} \label{eq4152143}
    g&:=\zeta f_r - r\zeta' F_{d,r} - x_d^2 a_{dj,r}\zeta' D_ju_r - 2x_d a_{dj,r}\zeta D_j  u_r \nonumber
    \\
    &\quad  - x_d b_{j,r} \zeta D_iu_r + x_d\hat{b}_{d,r}\zeta'u + \hat{b}_{d,r} \zeta u_r - c_r \zeta u_r +\bar{\lambda} c_{0,r} \zeta u_r.
  \end{align}
Due to Assumption \ref{ass_lead} $(\rho_0,\gamma_0)$, by scaling, one can show that the oscillation of $x_d^2 a_{ij,r}$ on $Q_{R}(t,x):=(t-R^2,t)\times B_R(x)$ is less than $N\gamma_0$ if $x_d\in(1,4)$ and $R>0$ is sufficiently small (see also Remark \ref{rem5101542}). Thus, there exist $\gamma_0=\gamma_0(d,p,\nu,K,K_0)$ and $\bar{\lambda}_0=\bar{\lambda}_0(d,p,\nu,K,K_0)$ such that if $\bar{\lambda}\geq\bar{\lambda}_0$, then
we can apply the $\cH_{p,\omega}^1(T)$-estimates (see Remark \ref{rem9162000} below) to get
  \begin{align} \label{eq4152321}
    \sqrt{\lambda+\bar{\lambda}} \|v\|_{L_{p,\omega}(\Omega_T)} + \|D_xv\|_{L_{p,\omega}(\Omega_T)} &\leq N \left(\|G\|_{L_{p,\omega}(\Omega_T)} + \frac{1}{\sqrt{\lambda+\bar{\lambda}}} \|g\|_{L_{p,\omega}(\Omega_T)}\right).
  \end{align}
   By \eqref{eq4152142} and \eqref{eq4152143}, using $supp\,\zeta\subset(2,3)$, we have
  \begin{align} \label{eq4152307}
    &\|G\|_{L_{p,\omega}(T)}  \leq N \left( r\|\zeta F_{r}\|_{L_{p,\omega}(T)} + \|\zeta u_r\|_{L_{p,\omega}(T)} + \|\zeta'u_r\|_{L_{p,\omega}(T)}\right)
  \end{align}
  and
  \begin{align} \label{eq4152308}
    \|g\|_{L_{p,\omega}(T)} &\leq N\|\zeta f_r\|_{L_{p,\omega}(T)} + Nr\|\zeta' F_{r}\|_{L_{p,\omega}(T)} \nonumber
    \\
    &\quad + Nr^{-1}\|\zeta'(D_xu)_r\|_{L_{p,\omega}(T)} + Nr^{-1}\|\zeta(D_xu)_r\|_{L_{p,\omega}(T)} \nonumber
    \\
    &\quad + N\|\zeta' u_r\|_{L_{p,\omega}(T)} + N(1+\bar{\lambda})\|\zeta u_r\|_{L_{p,\omega}(T)}.
  \end{align}
  Now we raise both sides of \eqref{eq4152321} to the power of $p$, multiply by $r^{-\theta-d}$, and integrate with respect to $r$ on $(0,\infty)$. Then by \eqref{eq4152140}, \eqref{eq4152307}, and \eqref{eq4152308}, we have
\begin{align} \label{eq11172310}
  &(1+\sqrt{\lambda})\|u\|_{\bL_{p,\theta,\omega}(T)} + \|x_d D_x u\|_{\bL_{p,\theta,\omega}(T)} \nonumber
  \\
  &\leq \frac{N}{1+\sqrt{\lambda}} \|f\|_{\bL_{p,\theta,\omega}(T)} + N \|x_d^{-1}F\|_{\bL_{p,\theta,\omega}(T)} \nonumber
  \\
  &\quad+ N\left(1 + \sqrt{\bar{\lambda}} \right)\|u\|_{\bL_{p,\theta,\omega}(T)} + \frac{N}{\sqrt{\bar{\lambda}}}\|x_dD_xu\|_{\bL_{p,\theta,\omega}(T)}.
\end{align}
Now we choose a sufficiently large $\bar{\lambda}$ such that $N/\sqrt{\bar{\lambda}}<1/2$.
This easily yields the desired estimate \eqref{eq4151444}. 
  The lemma is proved.
\end{proof}

\begin{remark} \label{rem9162000}
  In the proof of Lemma \ref{lem_high_div}, we used the $\cH_{p,\omega}^1(T)$-estimates for the following type of equation
  \begin{equation*}
  a_0u_t- D_i(a_{ij}D_{j}u)+ \lambda c_{0} u = D_iF_i + f,
\end{equation*}
which is slightly different from the equations considered in the literature; we have more general $a_0$ and $c_0$. In this remark, we outline how to obtain the estimate for this generalized equation.

First, we note that the equation can be reduced to the case $a_0=1$. The case $a_0=a_0(t)$ can be easily handled by dividing the equation by $a_0$. For the case $a_0=a_0(x_d)$, we consider a change of variables $y=y(x)$, where $y'=x'$, and
\begin{equation*}
   y_d=y_d(x_d)=\int_0^{x_d} a_0(z_d) dz_d,
 \end{equation*}
 and then divide the equation by $a_0$.
 Under this transformation, the leading coefficients still satisfy the partially BMO assumptions (see \cite[Assumptions A and A\textquotesingle]{DK11CVPDE}). Next, by following the arguments in \cite{DK11SIAM}, we obtain the desired estimates when $\omega=1$, and $(a_{ij})$ and $c_0$ depend on the same single variable, either $x_d$ or $t$.
For general case, one just needs to repeat the arguments in \cite{DK23}. More specifically, by following the proofs of Proposition $4.1$, Lemma $5.1$, and Theorem $2.8$ in the paper, we can derive the mean oscillation estimates of solutions and obtain the desired $\cH_{p,\omega}^1(T)$-result.
\end{remark}

By repeating the proof of Lemma \ref{lem_high_div} using \cite[Theorem 8.2]{DK11}, instead of the $\cH_{p,\omega}^1(T)$-estimates, we obtain the following result for equations on a finite time interval.

\begin{lemma} \label{lem_high_div_fi}
Let $\rho_0\in(1/2,1)$, $T\in(0,\infty]$, $\lambda\geq0$, $p\in(1,\infty)$, and $\theta\in \bR$. Assume that $u\in C_c^\infty([0,T]\times\bR^d_+)$ satisfies $u(0,\cdot)=0$, and 
\begin{equation*}
\sL_pu +\lambda c_0 u = D_iF_i+f
\end{equation*}
in $\Omega_{0,T}$, where $f,x_d^{-1}F\in \bL_{p,\theta}(0,T)$.
Then there exists a sufficiently small $\gamma_0>0$ depending only on $d,p$, and $\nu$ such that under Assumption \ref{ass_lead} $(\rho_0,\gamma_0)$,
\begin{align*}
  &(1+\sqrt{\lambda}) \|u\|_{\bL_{p,\theta}(0,T)} + \|x_dD_xu\|_{\bL_{p,\theta}(0,T)} \nonumber
  \\
  &\leq N \left( \frac{1}{1+\sqrt{\lambda}} \|f\|_{\bL_{p,\theta}(0,T)} + \|x_d^{-1}F\|_{\bL_{p,\theta}(0,T)} + \|u\|_{\bL_{p,\theta}(0,T)}\right),
\end{align*}
where $N=N(d,p,\theta,\nu,K)$.
\end{lemma}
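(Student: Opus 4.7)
The plan is to follow the scaling and localization argument used in the proof of Lemma \ref{lem_high_div}, replacing the $\cH_{p,\omega}^1(T)$-estimates with the finite-time, zero-initial-data estimates of \cite[Theorem 8.2]{DK11}. The setup is identical: for $r>0$, a standard nonnegative cutoff function $\zeta\in C_0^\infty((2,3))$, and $u_r(t,x):=u(t,x/r)$, I would set $v(t,x):=\zeta(x_d)u_r(t,x)$. The zero initial condition $u(0,\cdot)=0$ immediately gives $v(0,\cdot)=0$, and the support of $v$ in the $x_d$-direction lies in $(2,3)$, where the factor $x_d^2$ is comparable to $1$, so $x_d^2 a_{ij,r}$ is uniformly elliptic with constants depending only on $\nu$.

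For any $\bar{\lambda}\geq 0$, the function $v$ solves
\begin{equation*}
  a_{0,r}v_t - D_i(x_d^2 a_{ij,r} D_j v) + (\lambda+\bar{\lambda}) c_{0,r} v = D_i G_i + g
\end{equation*}
on $\Omega_{0,T}$ with $v(0,\cdot)=0$, where $G_i$ and $g$ are precisely the expressions in \eqref{eq4152142} and \eqref{eq4152143}. Under Assumption \ref{ass_lead} $(\rho_0,\gamma_0)$, a scaling argument (cf.\ Remark \ref{rem5101542}) shows that $x_d^2 a_{ij,r}$ has small partial BMO oscillation on the support of $\zeta(x_d)$, provided $\gamma_0$ is taken small enough. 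I would then invoke \cite[Theorem 8.2]{DK11} which, for $\bar{\lambda}$ sufficiently large depending only on $d,p,\nu$, gives
\begin{equation*}
  \sqrt{\lambda+\bar{\lambda}}\,\|v\|_{L_p(\Omega_{0,T})} + \|D_x v\|_{L_p(\Omega_{0,T})} \leq N\Bigl(\|G\|_{L_p(\Omega_{0,T})} + \tfrac{1}{\sqrt{\lambda+\bar{\lambda}}}\|g\|_{L_p(\Omega_{0,T})}\Bigr)
\end{equation*}
with $N$ independent of $T$ and $r$.

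Raising this estimate to the $p$-th power, multiplying by $r^{-\theta-d}$, and integrating over $r\in(0,\infty)$ — using the elementary change-of-variable identity \eqref{eq4152140} (which carries over verbatim without the weight $\omega$) — would convert the unweighted norms of $v$, $G$, and $g$ into the weighted norms $\bL_{p,\theta}(0,T)$ of $u$, $F$, $f$, $D_x u$, and $u$. The resulting inequality has the form of \eqref{eq11172310} with $\omega\equiv 1$. The standard absorption step then applies: by choosing $\bar{\lambda}$ large enough that $N/\sqrt{\bar{\lambda}}<1/2$, the term $\|x_d D_x u\|_{\bL_{p,\theta}(0,T)}$ on the right-hand side can be moved to the left, and the factor $(1+\sqrt{\bar{\lambda}})$ on the $\|u\|_{\bL_{p,\theta}(0,T)}$ term is absorbed into the constant $N$.

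The main subtlety I anticipate is verifying that \cite[Theorem 8.2]{DK11} is applicable in the generality required here, namely with bounded measurable $a_0$ and $c_0$ depending on a single variable rather than being identically $1$. This is handled exactly by the reduction sketched in Remark \ref{rem9162000}: when $a_0=a_0(t)$ one divides through by $a_0$, and when $a_0=a_0(x_d)$ one applies the change of variables $y_d=\int_0^{x_d} a_0(z)\,dz$, $y'=x'$, which preserves both the partially BMO structure of the leading coefficients and the zero initial condition. A second minor point is that the support condition $x_d\in(2,3)$ on $v$ means only that the partial BMO hypothesis of \cite[Theorem 8.2]{DK11} must be verified on balls meeting this strip, which follows from Assumption \ref{ass_lead} after the rescaling $x\mapsto x/r$.
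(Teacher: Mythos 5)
Your proposal is correct and follows essentially the same route as the paper, which obtains this lemma precisely by repeating the proof of Lemma \ref{lem_high_div} with the $\cH_{p,\omega}^1(T)$-estimates replaced by \cite[Theorem 8.2]{DK11} for the zero-initial-data Cauchy problem. The details you supply (preservation of the zero initial condition under the cutoff, the scaling identity \eqref{eq4152140}, the absorption via large $\bar{\lambda}$, and the reduction of general $a_0$, $c_0$ as in Remark \ref{rem9162000}) match the intended argument.
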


\begin{remark} \label{rem1118}
  Since Assumption \ref{ass_lead} $(\rho_0,\gamma_0)$ holds if Assumption \ref{ass_coeff} $(\rho_0,\gamma_0)$ is satisfied, the assertions in Lemmas \ref{lem_high_div} and \ref{lem_high_div_fi} remain valid under Assumption \ref{ass_coeff} $(\rho_0,\gamma_0)$.
\end{remark}

\mysection{Equations with simple coefficients} \label{sec_simple}

In this section, we present the solvability of the equations with simple coefficients.

We introduce the following (unweighted) function spaces. For $-\infty\leq S<T\leq \infty$ and $\cD \subset \bR^d$, we set
\begin{equation*}
  \cH_p^1((S,T)\times \cD):= \{u:u_t\in \widetilde{\bH}_{p}^{-1}((S,T)\times \cD), D_x^\alpha u\in L_p((S,T)\times \cD), 0\leq |\alpha|\leq 1 \},
\end{equation*}
where
\begin{equation*}
  \widetilde{\bH}_{p}^{-1}((S,T)\times \cD):=\{v: a_0v= D_i G_i + g, \text{ where } G_i,g\in L_p((S,T)\times \cD) \}.
\end{equation*}
We denote by $\mathring{\cH}_p^1((S,T)\times \cD)$ the closure in $\cH_p^1((S,T)\times \cD)$ of functions $u\in C_c^\infty([S,T]\times \cD)$ such that $u(S,\cdot)=0$.

We prove the solvability result in $\cH_{p,\theta}^1(T)=\cH_{p,p,\theta,1}^1(T)$.

\begin{lemma} \label{lem4192205}
  Let $T\in (-\infty,\infty]$, $\theta\in\bR$, $\lambda\geq0$, $p\in(1,\infty)$, and $f,x_d^{-1}F \in \bL_{p,\theta}(T)$. 

$(i)$ There exists $\lambda_0=\lambda_0(d,p,\theta,\nu,K)\geq0$ such that under Assumption \ref{ass_simple1}, the following assertions hold true. For any $\lambda\geq\lambda_0$, there is a unique solution $u\in \cH_{p,\theta}^1(T)$ to 
\begin{equation} \label{eq4261628}
   \sL_pu + \lambda c_0 u= D_iF_i+f
 \end{equation}
 in $\Omega_T$. Moreover, we have
\begin{align} \label{eq4191709}
  &(1+\sqrt{\lambda})\|u\|_{\bL_{p,\theta}(T)} + \|x_dD_xu\|_{\bL_{p,\theta}(T)} \nonumber
  \\
  &\leq N \left( \|x_d^{-1}F\|_{\bL_{p,\theta}(T)} + \frac{1}{1+\sqrt{\lambda}} \|f\|_{\bL_{p,\theta}(T)} \right),
\end{align}
where $N=N(d,p,\theta,\nu,K)$.

$(ii)$ Let $n_b,n_{\hat{b}},n_c\in\bR$, and assume that the quadratic equation 
\begin{equation} \label{eq4212254}
  z^2+(1+n_b+n_{\hat{b}})z-n_c=0
\end{equation}
has two distinct real roots $\alpha<\beta$.
Then under Assumption \ref{ass_simple2}, the assertions in $(i)$ hold with $\lambda_0=0$ and $\alpha p<\theta<\beta p$, where the dependencies of the constant $N$ are replaced by $d,p,\theta,n_b,n_{\hat{b}},n_c,\nu$, and $K$.
\end{lemma}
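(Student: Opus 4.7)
The plan is to derive the a priori estimate \eqref{eq4191709} from Lemmas \ref{lem_zero_div} and \ref{lem_high_div}, and then obtain existence and uniqueness by the method of continuity. Since Assumptions \ref{ass_simple1} and \ref{ass_simple2} are special cases of Assumptions \ref{ass_lead}$(\rho_0,\gamma_0)$ and \ref{ass_coeff}$(\rho_0,\gamma_0)$ for arbitrary $\gamma_0 > 0$, Lemma \ref{lem_high_div} applies throughout (with $\omega\equiv 1$, hence $K_0 = 1$) and gives the basic inequality
\[
(1+\sqrt{\lambda})\|u\|_{\bL_{p,\theta}(T)} + \|x_d D_x u\|_{\bL_{p,\theta}(T)} \leq N_0\Bigl(\tfrac{1}{1+\sqrt\lambda}\|f\|_{\bL_{p,\theta}(T)} + \|x_d^{-1}F\|_{\bL_{p,\theta}(T)} + \|u\|_{\bL_{p,\theta}(T)}\Bigr)
\]
for all $p \in (1,\infty)$. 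The task reduces to removing the $\|u\|$-term on the right.

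For part (i) I simply choose $\lambda_0$ so that $1+\sqrt{\lambda_0} \geq 2N_0$, absorbing $\|u\|_{\bL_{p,\theta}(T)}$ into the left-hand side. For part (ii) with $p \geq 2$, I invoke Lemma \ref{lem_zero_div}(ii) under the root condition $\alpha p < \theta < \beta p$: after applying Young's inequality to the $|u|^{p-1}|f|$ term, it yields $\|u\|_{\bL_{p,\theta}(T)} \leq N(\|x_d^{-1}F\|_{\bL_{p,\theta}(T)} + \|f\|_{\bL_{p,\theta}(T)})$, which I substitute into the display above. For part (ii) with $1 < p < 2$, I proceed by duality: the formal adjoint of \eqref{eq4261628} is again a divergence-form equation satisfying Assumption \ref{ass_simple2}, but with the roles of $b$ and $\hat b$ essentially interchanged and with time reversed; the dual integrability index is $p' > 2$, the dual weight exponent is some $\theta^*$ determined by the pairing, and the transformed ratios $(n_b^*, n_{\hat b}^*, n_c^*)$ yield an adjoint quadratic with two distinct real roots $\alpha^*, \beta^*$ such that the condition $\alpha p < \theta < \beta p$ translates exactly into $\alpha^* p' < \theta^* < \beta^* p'$. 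Applying the already-established $p' \geq 2$ estimate to the adjoint and dualizing gives \eqref{eq4191709} in the original variables.

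For existence I use the method of continuity. Pick a simple model operator $\cL_0$ with $a_{ij}^{(0)} = \delta_{ij}$, $a_0^{(0)} = c_0^{(0)} = 1$, $b_i^{(0)} = \hat b_i^{(0)} = 0$ for $i \neq d$, and (in part (ii)) $b_d^{(0)} = n_b$, $\hat b_d^{(0)} = n_{\hat b}$, $c^{(0)} = n_c$, so that Assumption \ref{ass_simple1} (resp. \ref{ass_simple2}) holds with the same ratios. Solvability of $\cL_0 u = D_iF_i + f$ in $\cH_{p,\theta}^1(T)$ follows from \cite{Khalf}, in part (ii) after the change of unknown $v = x_d^\gamma u$ that produced \eqref{eq4131657}. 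The convex combinations $\cL_\tau := (1-\tau)\cL_0 + \tau \cL$, $\tau \in [0,1]$, inherit Assumption \ref{ass_simple1} or \ref{ass_simple2} (crucially, the ratio identity $b_d = n_b a_{dd}$ is preserved by convex combinations), so the a priori estimate is uniform in $\tau$, and the standard continuity argument propagates solvability from $\tau=0$ to $\tau=1$. Uniqueness is immediate from the a priori estimate applied to the difference of two solutions.

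The main obstacle is the duality step in part (ii) for $1 < p < 2$: one has to identify the adjoint equation in divergence form, verify that it belongs to the class of Assumption \ref{ass_simple2} with well-defined new ratios $(n_b^*, n_{\hat b}^*, n_c^*)$, compute the dual weight exponent $\theta^*$ corresponding to the pairing used in Definition \ref{weak}, and check that the root range for the adjoint quadratic is the correct dual of the original range. Getting these ingredients to line up so that the $p' \geq 2$ estimate can be quoted is the principal technical point; everything else is routine once Lemmas \ref{lem_zero_div} and \ref{lem_high_div} are in hand.
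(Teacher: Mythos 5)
There is a genuine gap in the existence part. You reduce existence to a model operator $\cL_0$ and assert that its solvability in $\cH^1_{p,\theta}(T)$ ``follows from \cite{Khalf}'', but that reference treats the \emph{elliptic} degenerate problem with constant coefficients (and the heat equation in weighted spaces); it does not provide solvability of the degenerate parabolic equation $a_0u_t - x_d^2\Delta u + \lambda u = D_iF_i+f$ on $(-\infty,T)\times\bR^d_+$ in the class $\cH^1_{p,\theta}(T)$. This is precisely where the paper has to work: it solves the equation on truncated cylinders $\tilde{A}_k=(-\infty,T)\times\bR^{d-1}\times(2^{-k},2^k)$ using the nondegenerate theory of \cite{DK11}, derives bounds on the approximations $u_k$ that are uniform in $k$ (re-running the zeroth-order testing argument with $\|D_xF\|$ in place of $\|x_d^{-1}F\|$ so as to cover all $p\in(1,\infty)$, and a dilation/localization argument based on \cite{DK18} for $x_dD_xu_k$), and then passes to a weak limit. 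Without some such construction the method of continuity has no starting point. A second, smaller gap: for part (ii) with $p\in(1,2)$ you correctly identify the duality route but explicitly leave its ``principal technical point'' unverified. The paper carries it out: the adjoint operator is \eqref{eq6201126}, its associated quadratic is $z^2-(3+n_b+n_{\hat{b}})z-(n_c-n_b-n_{\hat{b}}-2)=0$ with roots $1-\beta<1-\alpha$, and the dual exponent $\theta'$ with $\theta/p+\theta'/p'=1$ satisfies $(1-\beta)p'<\theta'<(1-\alpha)p'$ exactly when $\alpha p<\theta<\beta p$. These computations are short but they are the content of the step; moreover, the duality argument presupposes that \emph{existence} (not merely the estimate) has already been established for the adjoint problem with exponent $p'>2$, so the order existence for $p\ge2$, then estimate for $p<2$ by duality, then existence for $p<2$ by continuity must be respected and should be made explicit.

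On the positive side, your derivation of the a priori estimate in part (i) by absorbing $N_0\|u\|_{\bL_{p,\theta}(T)}$ from the right-hand side of Lemma \ref{lem_high_div} into $(1+\sqrt{\lambda})\|u\|_{\bL_{p,\theta}(T)}$ once $1+\sqrt{\lambda_0}\ge 2N_0$ is valid for all $p\in(1,\infty)$ and is simpler than the paper's route, which invokes Lemma \ref{lem_zero_div} (hence $p\ge2$) and then duality for $p\in(1,2)$ even in case (i). Your observation that the convex combinations preserve Assumptions \ref{ass_simple1} and \ref{ass_simple2}, including the ratio conditions, is also correct and matches the paper.
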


\begin{proof}
Since the proofs for $(i)$ and $(ii)$ are the same, we present them together.

\textbf{1.} Case $T=\infty$.

Assume for the moment that we obtain the a priori estimate \eqref{eq4191709}.
Due to the method of continuity, it is enough to prove the existence when $a_{ij}, b_i, \hat{b}_i, c$, and $c_0$ are constants, and $f,F\in C_c^\infty((-\infty,T)\times \bR^d_+)$.
Let $A_k:=(T-k,T)\times \bR^{d-1}\times (2^{-k},2^k)$.
Since $f, F\in L_p(A_k)$, by \cite[Theorem 8.2]{DK11}, there is a solution $u_k\in \mathring{\cH}_p^1(A_k)$ of \eqref{eq4261628} with the initial condition $u_k(T-k,\cdot)=0$.
Here, we remark that although $a_0$ is not constant in our case, the same result as in \cite{DK11} can be obtained by following the proof of the theorem.
Since $u_k(T-k,\cdot)=0$, letting $u_k(t,\cdot)=0$ on $(-\infty,T-k]$, $u_k \in \mathring{\cH}_p^1(\tilde{A}_k)$, where $\tilde{A}_k:=(-\infty,T)\times \bR^{d-1}\times (2^{-k},2^k)$.
 
Now we formally take $|u_k|^{p-2} u_k x_d^{\theta-1}$ or $\frac{1}{a_0}|u|^{p-2}ux_d^{\theta-1}$ as a test function to 
 \begin{equation*}
  \sL_pu + \lambda c_0 u_k = D_i F_i + f
\end{equation*}
 in $\tilde{A}_k$. Then by following the proof of Lemma \ref{lem_zero_div}, it can be shown that there exists $\lambda_0\geq0$ such that
 \begin{align*}
      \int_{\tilde{A}_{k}} |u_k|^p x_d^{\theta-1} dxdt &\leq N \int_{\tilde{A}_{k}} |u_k|^{p-1} \left(|D_x F| + |f|\right) x_d^{\theta-1} dxdt,
 \end{align*}
 provided that $\lambda\geq\lambda_0$. Here, to deal with the whole range of $p\in(1,\infty)$, unlike \eqref{eq4260010}, we introduced the norm of $D_x F\in \bL_{p,\theta}(T)$ on the right-hand side instead of that of $x_d^{-1}F$.
Then by Young's inequality,
 \begin{align} \label{eq4261705}
   \int_{\tilde{A}_{k}} |u_k|^p x_d^{\theta-1} dxdt &\leq N \int_{\tilde{A}_{k}} \left(|D_x F|^p + |f|^p\right) x_d^{\theta-1} dxdt \nonumber
   \\
   &\leq N \int_{\Omega_T} \left(|D_x F|^p + |f|^p\right) x_d^{\theta-1} dxdt,
 \end{align}
 provided that $\lambda\geq \lambda_0$.

 Next, we consider $D_xu_k$. Let $\zeta\in C_0^\infty((2,3))$ be a standard non-negative cut-off function. As in the proof of Lemma \ref{lem_high_div}, for each $r>0, k\in\bN$, and $\tilde{A}_{k,r}:=\{(t,x):(t,x/r)\in \tilde{A}_k\}$, $v(t,x):=\zeta(x_d) u_{k,r}(t,x)=\zeta(x_d) u_k(t,x/r) \in \mathring{\cH}_p^1(\tilde{A}_{k,r})$ satisfies the equation
     \begin{equation*} 
     a_{0,r}v_t- D_i(x_d^2a_{ij}D_{j}v)+ (\lambda+\bar{\lambda})c_0 v=D_iG_i + g
    \end{equation*}
    in $\tilde{A}_{k,r}$, where $\bar{\lambda}\geq0$, and $G$ and $g$ are defined as \eqref{eq4152142} and \eqref{eq4152143}. Note that
$supp(v)=supp(\zeta u_{k,r}) \subset \{r2^{-k}\leq x_d\leq r2^k, 2\leq x_d \leq 3\}$, which implies that $supp(v)\subset (-\infty,T]\times\bR^{d-1}\times [a_{r,k},b_{r,k}]$ where $1/2\leq b_{r,k}-a_{r,k}\leq 1$ for each $r\in (2^{1-k},3\times2^{k})$.
   Thus, by applying \cite[Theorem 7.2]{DK18} to $v$ in this region, for $r\in (2^{1-k},3\times2^{k})$ and a sufficiently large $\bar{\lambda}$,
   \begin{eqnarray} \label{eq4261450}
     &r^{-1} \|\zeta D_xu_{k,r}\|_{L_{p}(\bar{A}_{k,r})} \leq N \left(\| D_x(\zeta u_{k,r})\|_{L_{p}(\bar{A}_{k,r})} + \| \zeta' u_{k,r}\|_{L_{p}(\bar{A}_{k,r})}\right)& \nonumber
     \\
     &\leq N\left( \|G\|_{L_{p}(\bar{A}_{k,r})} + \frac{1}{\sqrt{\lambda+\bar{\lambda}}}\|g\|_{L_{p}(\bar{A}_{k,r})} + \| \zeta' u_{k,r}\|_{L_{p}(\bar{A}_{k,r})}\right),&
   \end{eqnarray}
   where the constants $N$ are independent of $k,r$, and $T$. Here, we remark that, although $a_0$ is assumed to be a constant in \cite{DK18}, by following the proof in this paper, the same result can be obtained even for general $a_0$. Moreover, since $v=0$ if $r\notin (2^{1-k},3\times2^{k})$, we still have \eqref{eq4261450} for all $r\in(0,\infty)$.
  As in \eqref{eq4152140}, one can show that for a function $h$ on $\tilde{A}_{k,r}$,
    \begin{align} \label{eq11032036}
    &\int_0^\infty \left( \int_{\tilde{A}_{k,r}} |D_d^j\zeta(x_d)h_r(t,x)|^p dxdt \right) r^\gamma dr \nonumber
    \\
    &= \int_0^\infty \left( \int_{\tilde{A}_{k}} |D_d^j\zeta(rx_d)h(t,x)|^p dxdt \right) r^{\gamma+d} dr \nonumber
    \\
    &= N_j \int_{\tilde{A}_{k}} |h(t,x)|^p x_d^{-\gamma-d-1} dxdt,
\end{align}
  where $h_r(t,x):=h(t,x/r)$ and
  \begin{equation*}
    N_j:=\int_0^\infty |D_s^j\zeta(s)|^p s^{\gamma+d} ds, \quad j=0,1.
  \end{equation*}
  As in \eqref{eq11172310}, by \eqref{eq4261705}, \eqref{eq4261450}, and \eqref{eq11032036},
 \begin{align*} 
   &\int_{\tilde{A}_{k}} |x_dD_xu_k|^p x_d^{\theta-1} dxdt
   \\
   &\leq N \int_{\tilde{A}_{k}} \left(|D_x F|^p + |f|^p + (1+\bar{\lambda})^{p/2} |u_k|^p + \bar{\lambda}^{-p/2} |x_dD_xu_k|^p\right) x_d^{\theta-1} dxdt
      \\
   &\leq N \int_{\tilde{A}_{k}} \left((1+\bar{\lambda})^{p/2}|D_x F|^p + (1+\bar{\lambda})^{p/2}|f|^p + \bar{\lambda}^{-p/2} |x_dD_xu_k|^p\right) x_d^{\theta-1} dxdt.
 \end{align*}
 By taking sufficiently large $\bar{\lambda}$ so that $N\bar{\lambda}^{-p/2} <1/2$, we get
 \begin{align} \label{eq4261706}
   \int_{\tilde{A}_{k}} |x_dD_xu_k|^p x_d^{\theta-1} dxdt \leq N \int_{\tilde{A}_{k}} \left(|D_x F|^p + |f|^p\right) x_d^{\theta-1} dxdt.
 \end{align}

If we take $u_k=0$ in $\Omega_T\setminus\tilde{A}_{k}$, then by \eqref{eq4261705} and \eqref{eq4261706}, $u_k$ is a bounded sequence in $\cH^1_{p,\theta}(T)$. Thus, there is a subsequence still denoted by $u_k$ so that $u_k \rightharpoonup u$ (weakly) in $\cH^1_{p,\theta}(T)$. One can show that by using the weak formulation in Definition \ref{weak}, $u$ is a solution to \eqref{eq4261628}.

Now we prove the a priori estimate \eqref{eq4191709}. Note that Assumption \ref{ass_lead} $(\rho_0,\gamma_0)$ and Assumption \ref{ass_coeff} $(\rho_0,\gamma_0)$ hold when Assumption \ref{ass_simple1} and Assumption \ref{ass_simple2} are satisfied, respectively.
This implies that the case $p\in[2,\infty)$ can be easily obtained by Lemmas \ref{lem_zero_div} and \ref{lem_high_div}, and Remark \ref{rem1118}.
Thus, we only need to prove the estimate when $p\in(1,2)$.

We use a duality argument. We first treat the case when $a_{dd}=a_{dd}(x_d)$. Define the operators
   \begin{equation} \label{eq6201125}
     \cL u:= a_0u_t - x_d^2 D_{i}(a_{ij} D_j u) + x_d b_i D_i u + x_d D_i(\hat{b}_iu) +cu + \lambda c_0 u
   \end{equation}
   and
   \begin{align} \label{eq6201126}
     \cL^* u &:= -a_0u_t - x_d^2 D_{j}(a_{ij} D_i u) -x_d(\hat{b}_i +2a_{id}) D_iu - x_d D_i((b_i+2a_{di})u) \nonumber
     \\
     &\quad +(c-b_d-\hat{b}_d-2a_{dd})u + \lambda c_0 u.
   \end{align}
      Then $\cL^*$ is the dual operator of $\cL$, and the coefficients of $\cL^*$ still satisfy Assumption \ref{ass_simple1} or Assumption \ref{ass_simple2}.
      In particular, under Assumption \ref{ass_simple2}, the corresponding quadratic equation \eqref{eq4212254} for $\cL^*$ is
   \begin{equation*}
     z^2-(3+n_b+n_{\hat{b}})z-(n_c-n_b-n_{\hat{b}}-2)=0,
   \end{equation*}
   which has two real roots $-\beta +1$ and $-\alpha +1$.
 Notice that the dual space of $\bL_{p,\theta}(T)$ is $\bL_{p',\theta'}(T)$ where $p'=p/(p-1)$, and $\theta/p+\theta'/p'=1$.
Let $u\in C_c^\infty(\bR^{d+1}_+)$ and denote
\begin{equation*}
  \cL u=D_iF_i + f,
\end{equation*}
where $x_d^{-1}F, f\in \bL_{p,\theta}(T)$.
 Since $p'>2$, due to the above existence result, there is $\lambda_0\geq0$ such that for $\lambda\geq\lambda_0$, and $g,G=(G_1,\dots,G_d)\in C_c^\infty(\bR^{d+1}_+)$, we can take a solution $v\in \cH_{p',\theta'}^1(T)$ to $\cL^* v = D_iG_i + g$.
 Then,
\begin{align*}
  (u, D_iG_i +g)_{L_2(\bR^{d+1}_+)} &= (u,\cL^* v)_{L_2(\bR^{d+1}_+)} 
  \\
  &= (\cL u,v)_{L_2(\bR^{d+1}_+)} = (D_iF_i+f,v)_{L_2(\bR^{d+1}_+)},
\end{align*}
where $(\cdot,\cdot)_{\bR^{d+1}_+}$ denotes the standard $L_2$ inner product in $\bR^{d+1}_+$.
This together with the corresponding estimate to \eqref{eq4191709} for $v$ yields that
\begin{align*}
  &|(u,D_iG_i +g)_{L_2(\bR^{d+1}_+)}| 
  \\
  &\leq (\|x_d^{-1}F\|_{\bL_{p,\theta}(T)} + \frac{1}{1+\sqrt{\lambda}}\|f\|_{\bL_{p,\theta}(T)}) ((1+\sqrt{\lambda})\|v\|_{\bL_{p',\theta'}(T)} + \|x_dD_xv\|_{\bL_{p',\theta'}(T)})
  \\
  &\leq N(\|x_d^{-1}F\|_{\bL_{p,\theta}(T)} + \frac{1}{1+\sqrt{\lambda}}\|f\|_{\bL_{p,\theta}(T)}) (\|x_d^{-1}G\|_{\bL_{p',\theta'}(T)} + \frac{1}{1+\sqrt{\lambda}}\|g\|_{\bL_{p',\theta'}(T)}).
\end{align*}
Since $G$ and $g$ are arbitrary, we obtain \eqref{eq4191709}. 
For the case when $a_{dd}=a_{dd}(t)$, repeat the above argument with the fact that $\frac{1}{a_0}\cL^*$ is the dual operator of $\frac{1}{a_0}\cL$.

  \textbf{2.} Case $T<\infty$.

We first show the existence of solutions satisfying \eqref{eq4191709}. For this, it suffices to find a solution when $f,F\in C_c^\infty((-\infty,T)\times \bR^d_+)$. In this case, we extend $F$ and $f$ to be zero for $t\geq T$. Then there exists a solution $u\in \cH_{p,\theta}^1(\infty)$ to \eqref{eq4261628}. Moreover, we have
\begin{eqnarray*}
&(1+\sqrt{\lambda})\|u\|_{\bL_{p,\theta}(T)} + \|x_dD_xu\|_{\bL_{p,\theta}(T)} \leq (1+\sqrt{\lambda})\|u\|_{\bL_{p,\theta}(\infty)} + \|x_dD_xu\|_{\bL_{p,\theta}(\infty)}&
  \\
  &\leq N\left(\|x_d^{-1}F\|_{\bL_{p,\theta}(\infty)} + \frac{1}{1+\sqrt{\lambda}}\|f\|_{\bL_{p,\theta}(\infty)} \right)&
  \\
  &= N\left(\|x_d^{-1}F\|_{\bL_{p,\theta}(T)} + \frac{1}{1+\sqrt{\lambda}}\|f\|_{\bL_{p,\theta}(T)} \right).&
\end{eqnarray*}
This easily implies that $u\in \cH_{p,\theta}^1(T)$, and it is a solution to \eqref{eq4261628} satisfying \eqref{eq4191709}.

Lastly, we consider the uniqueness. Assume that $u\in \cH_{p,\theta}^1(T)$ satisfies
\begin{equation} \label{eq6191604}
  \cL u=0,
\end{equation}
where $\cL$ is defined as \eqref{eq6201125}.
 As in \eqref{eq4261705}, if we take $|u|^{p-2}ux_d^{\theta-1}$ or $\frac{1}{a_0}|u|^{p-2}ux_d^{\theta-1}$ as a test function to \eqref{eq6191604}, then we have $u=0$.
The lemma is proved.
\end{proof}

\begin{lemma} \label{lem6291333}
  Let $T\in (0,\infty]$, $\theta\in\bR$, $\lambda\geq0$, $p\in[2,\infty)$, and $f,x_d^{-1}F \in \bL_{p,\theta}(T)$. 

  $(i)$ There exists $\lambda_0=\lambda_0(d,p,\theta,\nu,K)$ such that under Assumption \ref{ass_simple1}, the following assertions hold true. For any $\lambda\geq\lambda_0$, there is a unique solution $u\in \mathring{\cH}_{p,\theta}^1(T)$ to \eqref{eq4261628} in $\Omega_T$. Moreover, we have
\begin{eqnarray*}
  &\sup_{t\in(0,T)}\|u(t,\cdot)\|_{L_{p,\theta}} + (1+\sqrt{\lambda})\|u\|_{\bL_{p,\theta}(0,T)} + \|x_dD_xu\|_{\bL_{p,\theta}(0,T)}&
  \\
  &\leq N \left( \|x_d^{-1}F\|_{\bL_{p,\theta}(0,T)} + \|f\|_{\bL_{p,\theta}(0,T)} \right),&
\end{eqnarray*}
where $N=N(d,p,\theta,\nu,K)$.

$(ii)$ Let $n_b,n_{\hat{b}},n_c\in\bR$, and assume that the quadratic equation \eqref{eq4212254} has two distinct real roots $\alpha<\beta$.
Then under Assumption \ref{ass_simple2}, the assertions in $(i)$ hold with $\lambda_0=0$ and $\alpha p<\theta<\beta p$, where the dependencies of the constant $N$ are replaced by $d,p,\theta,n_b,n_{\hat{b}},n_c,\nu$, and $K$.
\end{lemma}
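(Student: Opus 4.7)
The plan is to reduce the Cauchy problem on $(0,T)$ to the infinite-interval solvability of Lemma \ref{lem4192205} by extending the forcing terms by zero to $t < 0$, and then separately to upgrade the resulting estimate with the sup-in-time bound supplied by the initial-value version of the zero-th order estimate in Lemma \ref{lem_zero_initial}.

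For existence and the $\bL_{p,\theta}(0,T)$ and $\|x_d D_x u\|$ pieces of the bound, I would extend $F$ and $f$ by zero to $(-\infty, 0) \times \bR^d_+$ and apply Lemma \ref{lem4192205} (part (i) for case (i) with $\lambda \geq \lambda_0$, part (ii) for case (ii) with $\alpha p < \theta < \beta p$) to obtain a unique $\tilde u \in \cH_{p,\theta}^1(T)$ satisfying
\begin{equation*}
(1+\sqrt{\lambda}) \|\tilde u\|_{\bL_{p,\theta}(T)} + \|x_d D_x \tilde u\|_{\bL_{p,\theta}(T)} \leq N \left( \|x_d^{-1} F\|_{\bL_{p,\theta}(0,T)} + \tfrac{1}{1+\sqrt{\lambda}} \|f\|_{\bL_{p,\theta}(0,T)} \right).
\end{equation*}
The restriction of $\tilde u$ to $(-\infty, 0) \times \bR^d_+$ solves the homogeneous equation there, so invoking Lemma \ref{lem4192205} on the interval $(-\infty, 0)$ forces $\tilde u \equiv 0$ for $t \leq 0$. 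Hence $u := \tilde u|_{(0,T)}$ belongs to $\mathring{\cH}_{p,\theta}^1(0,T)$, solves the Cauchy problem, and inherits the estimate above. Uniqueness follows from the reverse construction: zero-extending any solution of the homogeneous problem to negative times produces an element of $\cH_{p,\theta}^1(T)$ which satisfies the full-line homogeneous equation in the weak sense of Definition \ref{weak}, and therefore vanishes by Lemma \ref{lem4192205}.

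For the $\sup_t \|u(t, \cdot)\|_{L_{p,\theta}}$ term, I would approximate $u$ in $\mathring{\cH}_{p,\theta}^1(0,T)$ by $u_n \in C_c^\infty([0,T] \times \bR^d_+)$ with $u_n(0, \cdot) = 0$ and corresponding forcing terms $F_n, f_n$ converging to $F, f$. Applying Lemma \ref{lem_zero_initial} to $u_n$, the initial boundary term vanishes and yields
\begin{equation*}
\sup_{t \leq T} \int_{\bR^d_+} |u_n(t, \cdot)|^p x_d^{\theta-1} \, dx \leq N \int_{\Omega_{0,T}} \left( |x_d^{-1} F_n|^p + |u_n|^{p-1} |f_n| \right) x_d^{\theta-1} \, dx \, dt.
\end{equation*}
Young's inequality with conjugate exponents $p/(p-1)$ and $p$ splits $|u_n|^{p-1} |f_n| \leq \varepsilon |u_n|^p + N(\varepsilon) |f_n|^p$, and the $\varepsilon|u_n|^p$ piece is absorbed using the $\bL_{p,\theta}(0,T)$-bound on $u_n$ already obtained. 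Taking $p$-th roots and passing to the limit gives the sup estimate, with $\|f\|_{\bL_{p,\theta}(0,T)}$ rather than $\tfrac{1}{1+\sqrt{\lambda}}\|f\|$ on the right-hand side (the price of using Young's inequality to eliminate $|u|^{p-1}$).

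The main technical point is to confirm that the zero-extension of a $\mathring{\cH}_{p,\theta}^1(0,T)$ function to $t < 0$ is genuinely in $\cH_{p,\theta}^1(T)$ and generates no distributional mass at $t=0$, which is guaranteed by $u(0, \cdot) = 0$ together with density of smooth approximations supported in $[0,T]$ and vanishing at $t=0$. Apart from this bookkeeping, both cases (i) and (ii) are handled uniformly, with the only change being the choice of $\lambda_0$ and the admissible range of $\theta$ provided by the corresponding parts of Lemmas \ref{lem4192205} and \ref{lem_zero_initial}.
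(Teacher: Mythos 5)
Your proposal is correct, but it follows a different route from the paper. The paper proves this lemma by re-running the construction of Lemma \ref{lem4192205} on the finite interval: it builds approximate solutions $u_k$ on $(0,T)\times\bR^{d-1}\times(2^{-k},2^k)$ with zero initial data, and replaces the zeroth- and first-order estimates of Lemmas \ref{lem_zero_div} and \ref{lem_high_div} by their initial-value counterparts, Lemmas \ref{lem_zero_initial} and \ref{lem_high_div_fi}; the supremum in time comes out of Lemma \ref{lem_zero_initial} directly since $p\geq 2$. You instead reduce to the already-proved whole-line result by extending $F,f$ by zero for $t<0$, use uniqueness on $(-\infty,0)$ to conclude the solution vanishes for $t\leq 0$, and then invoke Lemma \ref{lem_zero_initial} only for the $\sup_t$ term. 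This is legitimate — indeed it is exactly the zero-extension device the paper itself uses later in the proof of Theorem \ref{thm_finite_div} — and it buys you a shorter argument that recycles Lemma \ref{lem4192205} wholesale rather than repeating its approximation scheme. Two points deserve care. First, $\lambda_0$ must be taken at least as large as the thresholds in both Lemma \ref{lem4192205} and Lemma \ref{lem_zero_initial}, since you use both. Second, the passage from ``$\tilde u\in\cH_{p,\theta}^1(T)$ vanishes for $t\leq 0$'' to ``$\tilde u|_{(0,T)}\in\mathring{\cH}_{p,\theta}^1(0,T)$'' is the genuine technical step you flag: the cleanest way to discharge it is to first restrict to $F,f\in C_c^\infty((0,T)\times\bR^d_+)$ (so that $\tilde u$ vanishes for $t<\delta$ and a smooth approximation vanishing at $t=0$ is immediate, as in the paper's proof of Theorem \ref{thm_finite_div}) and then pass to general data by density using the a priori estimate; a direct argument for general data would need a time-shift-and-mollify approximation, which is routine but should be said. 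With these points made explicit, the proof is complete.
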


\begin{proof}
  The claim can be proved by repeating the proof of Lemma \ref{lem4192205}. The differences in the existence part will be described briefly below.
  First, we use \cite{DK11} to obtain a solution $u_k$ to \eqref{eq4261628} in $\mathring{\cH}_p^1((0,T)\times \bR^{d-1}\times(2^{-k},2^k))$ with the initial condition $u_k(0,\cdot)=0$. Next, for zeroth order estimate \eqref{eq4261705}, we use Lemma \ref{lem_zero_initial} instead of Lemma \ref{lem_zero_div}. We also remark that $\sup_{t\in(0,T)}\|u(t,\cdot)\|_{L_{p,\theta}}$ also can be estimated due to $p\geq2$.
   Lastly, for higher-order estimate \eqref{eq4261706}, we again use Lemma \ref{lem_high_div_fi} instead of Lemma \ref{lem_high_div}. The lemma is proved.
\end{proof}

Next, for weighted mixed-norm estimates, we consider a decomposition of the solution.

\begin{lemma} \label{lem5231637}
Let $T\in (-\infty,\infty]$, $p\in(1,\infty)$, and $f, x_d^{-1}F \in \bL_{p,\theta}(T)$. Suppose that $u\in \cH_{p,\theta}^1(T)$ is a solution to \eqref{eq4261628}.

$(i)$ There exists $\lambda_0=\lambda_0(d,p,\theta,\nu,K)\geq0$ such that under Assumption \ref{ass_simple1}, the following assertions hold true.
Let $\lambda\geq\lambda_0$. Then for any $t_0\in(-\infty,T]$ and $r>0$, there exist $v,w\in \cH_{p,\theta}^1(T)$ such that $u=v+w$,
  \begin{align} \label{eq5231452}
    &\left(\aint_{t_0-r}^{t_0} \|w(t)\|_{L_{p,\theta}}^p dt\right)^{1/p} \nonumber
    \\
    &\leq \frac{N}{1+\lambda}\left( \aint_{t_0-r}^{t_0} \|f(t)\|_{L_{p,\theta}}^p dt\right)^{1/p} + \frac{N}{1+\sqrt{\lambda}} \left(\aint_{t_0-r}^{t_0} \|x_d^{-1}F(t)\|_{L_{p,\theta}}^p dt\right)^{1/p},
  \end{align}
  and
  \begin{align} \label{eq5231605}
    \sup_{t\in(t_0-r/2,t_0)} \|v(t)\|_{L_{p,\theta}} \leq N \left(\aint_{t_0-r}^{t_0} \|v(t)\|_{L_{p,\theta}}^p dt\right)^{1/p},
  \end{align}
  where $N=N(d,p,n_b,n_{\hat{b}},n_c,\nu,K)$.

$(ii)$ Let $n_b,n_{\hat{b}},n_c\in\bR$, and assume that the quadratic equation \eqref{eq4212254} has two distinct real roots $\alpha<\beta$.
Then under Assumption \ref{ass_simple2}, the assertions in $(i)$ hold with $\lambda_0=0$ and $\alpha p<\theta<\beta p$, where the dependencies of the constant $N$ are replaced by $d,p,\theta,n_b,n_{\hat{b}},n_c,\nu$, and $K$.
\end{lemma}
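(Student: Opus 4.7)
The plan is a standard Krylov-style decomposition of $u$ into a ``data'' piece and a ``caloric'' piece. For fixed $t_0 \in (-\infty, T]$ and $r > 0$, define the time-truncated forcings $\widetilde F_i := F_i \mathbf{1}_{(t_0 - r, t_0)}(t)$ and $\widetilde f := f \mathbf{1}_{(t_0 - r, t_0)}(t)$, which lie in $\bL_{p,\theta}(T)$. By Lemma \ref{lem4192205} (in regime $(i)$ with $\lambda \ge \lambda_0$, or in regime $(ii)$ under $\alpha p < \theta < \beta p$), there is a unique $w \in \cH_{p,\theta}^1(T)$ solving the equation $\cL w = D_i \widetilde F_i + \widetilde f$, where $\cL$ is the operator in \eqref{eq6201125}. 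Setting $v := u - w \in \cH_{p,\theta}^1(T)$, linearity and the vanishing of $F - \widetilde F$ and $f - \widetilde f$ on $(t_0 - r, t_0)$ show that $v$ satisfies the homogeneous equation $\cL v = 0$ on $(t_0 - r, t_0) \times \bR^d_+$.

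The bound \eqref{eq5231452} follows immediately from \eqref{eq4191709} applied to $w$: since the truncated forcings vanish outside $(t_0 - r, t_0)$, one has
\begin{equation*}
  (1 + \sqrt{\lambda}) \|w\|_{\bL_{p,\theta}(t_0 - r, t_0)} \le N \|x_d^{-1} F\|_{\bL_{p,\theta}(t_0 - r, t_0)} + \frac{N}{1 + \sqrt{\lambda}} \|f\|_{\bL_{p,\theta}(t_0 - r, t_0)},
\end{equation*}
and dividing by $r^{1/p}$ converts these $L_p$-integrals into the averages appearing in \eqref{eq5231452}.

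For \eqref{eq5231605}, the idea is to exploit that $v$ solves the homogeneous equation on $(t_0 - r, t_0)$. For a.e.\ $s \in (t_0 - r, t_0 - r/2)$, the restriction $v|_{(s, t_0)}$ is the solution of the homogeneous Cauchy problem with initial data $v(s, \cdot) \in L_{p,\theta}$, and the goal is the caloric sup estimate
\begin{equation*}
  \sup_{t \in (s, t_0)} \|v(t, \cdot)\|_{L_{p,\theta}} \le N \|v(s, \cdot)\|_{L_{p,\theta}}.
\end{equation*}
For $p \in [2, \infty)$ this is exactly Lemma \ref{lem_zero_initial} applied with $F = f = 0$ after a time translation. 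Once the sup estimate is in hand, raising both sides to the $p$-th power and averaging over $s \in (t_0 - r, t_0 - r/2)$ yields
\begin{equation*}
  \sup_{t \in (t_0 - r/2, t_0)} \|v(t, \cdot)\|_{L_{p,\theta}}^p \le N \aint_{t_0 - r}^{t_0 - r/2} \|v(s, \cdot)\|_{L_{p,\theta}}^p \, ds \le 2N \aint_{t_0 - r}^{t_0} \|v(s, \cdot)\|_{L_{p,\theta}}^p \, ds,
\end{equation*}
which is \eqref{eq5231605}.

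The main obstacle is establishing the caloric sup bound for $p \in (1, 2)$, since Lemma \ref{lem_zero_initial} is stated only for $p \ge 2$. I plan to handle this by duality, analogously to Part~1 of the proof of Lemma \ref{lem4192205}. The adjoint operator $\cL^*$ in \eqref{eq6201126} has simple coefficients of the same type, and under Assumption \ref{ass_simple2} its associated quadratic has roots $-\beta + 1 < -\alpha + 1$, so the conjugate exponents $p' = p/(p-1) > 2$ and $\theta'$ with $\theta/p + \theta'/p' = 1$ fall in the admissible range for the dual statement. Pairing $v(\tau, \cdot)$ at a fixed time $\tau$ with the solution of the backward homogeneous adjoint Cauchy problem carrying regularized terminal data proportional to $|v(\tau, \cdot)|^{p - 2} v(\tau, \cdot) x_d^{\theta - 1}$, and invoking the $\bL_{p', \theta'}$ a priori estimate, transfers the sup bound to the range $p \in (1, 2)$ and completes the argument.
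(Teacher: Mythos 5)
Your proposal is correct and follows essentially the same route as the paper: the same truncation-in-time decomposition $u=v+w$ via Lemma \ref{lem4192205} giving \eqref{eq5231452}, an energy-type argument for the caloric sup bound \eqref{eq5231605} when $p\ge2$, and a duality argument through $\cL^*$ with $p'>2$ when $p\in(1,2)$. The only cosmetic differences are that the paper localizes with a time cut-off $\zeta$ and tests directly (rather than applying the initial-value estimate of Lemma \ref{lem_zero_initial} and averaging over initial times $s$), and in the duality step it pairs against arbitrary $\phi\in C_c^\infty(\bR^d_+)$ (corrected by the zero-terminal-data solution from Lemma \ref{lem6291333}) rather than against the normalized dual element of $v(\tau,\cdot)$.
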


\begin{proof}
As in the proof of Lemma \ref{lem4192205}, we prove both $(i)$ and $(ii)$ together.
By a shift of the coordinates, we may assume that $t_0=0$. Take $\lambda_0\geq0$ to be greater than the $\lambda_0$ in Lemma \ref{lem4192205}.
  By Lemma \ref{lem4192205}, for $\lambda\geq\lambda_0$, there is $w\in \cH_{p,\theta}^1(T)$ such that
\begin{align*}
   \sL_p w + \lambda c_0 w = D_iF_i1_{(-r,0)}+f1_{(-r,0)}
 \end{align*}
 and
 \begin{align*}
   \|w\|_{\bL_{p,\theta}(T)}&\leq \frac{1}{1+\sqrt{\lambda}} \left((1+\sqrt{\lambda}) \|w\|_{\bL_{p,\theta}(T)} + \|x_dD_xw\|_{\bL_{p,\theta}(T)} \right)
   \\
   &\leq \frac{N}{1+\sqrt{\lambda}} \left( \frac{1}{1+\sqrt{\lambda}}\|f1_{(-r,0)}\|_{\bL_{q,p,\theta}(T)} + \|x_d^{-1}F1_{(-r,0)}\|_{\bL_{q,p,\theta}(T)} \right).
 \end{align*}
 This easily yields \eqref{eq5231452}.

Let $v:=u-w \in \cH_{p,\theta}^1(T)$. Then $v$ satisfies 
\begin{align} \label{eq5231544}
     \sL_pv + \lambda c_0 v= D_iF_i1_{(-r,0)^c}+f1_{(-r,0)^c}.
\end{align}

Due to the similarity, we only prove \eqref{eq5231605} for the case $a_{dd}=a_{dd}(x_d)$.
 We first consider the case $p\geq2$. Let us formally test \eqref{eq5231544} by $|v|^{p-2}v \zeta x_d^{\theta-1}$, where $\zeta$ is a cut-off function such that $\zeta=1$ on $(-r/2,0]$, $\zeta=0$ on $(-\infty,-r)$, and $|\zeta_{t}|\leq N/r$. Then, we have
   \begin{align} \label{eq5231604}
    &\int_{\Omega_s} a_0 v_t |v|^{p-2}v \zeta x_d^{\theta-1} dxdt + \int_{\Omega_s} a_{ij} D_i(x_d^{\theta+1} |v|^{p-2}v \zeta) D_{j}v dxdt \nonumber
    \\
    &+ \int_{\Omega_s} b_i|v|^{p-2}v \zeta D_{i} v x_d^{\theta} dxdt - \int_{\Omega_T} \hat{b}_i v D_{i}(|v|^{p-2}v x_d^{\theta} \zeta) dxdt \nonumber
    \\
    &+ \int_{\Omega_s} c |v|^p \zeta x_d^{\theta-1} dxdt + \lambda \int_{\Omega_s} c_0 |v|^p \zeta x_d^{\theta-1} dxdt =0.
  \end{align}
  By integration by parts, for $s\in (-r/2,0)$,
\begin{align*}
  &\int_{\Omega_{s}} a_0 v_t |v|^{p-2}v \zeta x_d^{\theta-1} dxdt = \int_{\bR_+^d} \int_{-\infty}^s \frac{a_0}{p} (|v|^p)_t \zeta x_d^{\theta-1} dxdt 
  \\
  &= \frac{1}{p} \int_{\bR_+^d} a_0 |v|^p(s,\cdot) \zeta x_d^{\theta-1} dx - \frac{1}{p} \int_{-\infty}^s \int_{\bR_+^d} a_0 |v|^p \zeta_{t} x_d^{\theta-1} dxdt.
\end{align*}
We control the other terms in \eqref{eq5231604} as in the proof of Lemma \ref{lem_zero_div}. Then one can find $\lambda_0\geq0$ such that for $\lambda\geq\lambda_0$ and $s\in (-r/2,0)$,
\begin{align*}
  \int_{\bR_+^d} |v|^p(s,x) x_d^{\theta-1} \zeta dx &\leq N \int_{-r}^s \int_{\bR_+^d} |v|^p |\zeta_{t}| x_d^{\theta-1} dxdt\leq N r^{-1} \int_{-r}^0 \int_{\bR_+^d} |v|^p x_d^{\theta-1} dxdt.
\end{align*}
Thus, we have \eqref{eq5231605}.

Next, we deal with the case $p<2$ by using a duality argument. Let $\phi\in C_c^\infty(\bR^d_+)$. Then $\partial_t(\phi \zeta)\in \widetilde{\bH}_{p',\theta'}^{-1}(-\infty,0)$, where $p':=p/(p-1)$, $\theta/p+\theta'/p'=1$, and $\cL^*$ is defined as \eqref{eq6201126}. Thus, one can find $g,x_d^{-1}G\in \bL_{p',\theta'}(-\infty,0)$ such that
$$
\cL^*(\phi \zeta)=g+D_iG_i.
$$
Take $\lambda_0\geq0$ to be greater than  both the $\lambda_0$ in Lemma \ref{lem4192205}, and the ones in Lemmas \ref{lem_zero_initial} and \ref{lem6291333} for $p'$ and $\cL^*$ instead of $p$ and $\cL$, respectively. By applying Lemma \ref{lem6291333} in the reverse time direction, for any $s\in(-r/2,0)$ and $\lambda\geq\lambda_0$, there is a solution $\tilde{v}\in \bH_{p',\theta'}^1(-\infty,s)$ to $\cL^*\tilde{v}=g+D_iG_i$ with the zero terminal condition $\tilde{v}(s,\cdot)=0$.
Thus, due to the denseness of the solution space, there are $\tilde{v}_n\in C_c^\infty((-\infty,s]\times\bR^d_+)$, and $g_n,x_d^{-1}G_n\in \bL_{p',\theta'}(-\infty,s)$ such that $\cL^*\tilde{v}_n=g_n+D_i G_{ni}$, $\tilde{v}_n(s,\cdot)=0$, and
\begin{eqnarray*}
  \tilde{v}_n\to \tilde{v} &\text{ in } \, \bH_{p',\theta'}^1(-\infty,s),&
  \\
g_n, x_d^{-1}G_n \to g, x_d^{-1}G &\text{ in } \, \bL_{p',\theta'}(-\infty,s).&
\end{eqnarray*}
Now we apply Lemma \ref{lem_zero_initial} to $\bar{v}_n:=\tilde{v}_n-\phi\zeta$ in $(-r,0)\times\bR^d_+$ with the operator $\cL^*$ (in the reverse time direction). Then for $s\in(-r/2,0)$, we have (note $p'>2$)
\begin{align*}
  &\sup_{t\in(-r,s)} \|\bar{v}_n(t)\|_{L_{p',\theta'}} 
  \\
&\leq N \left( \|g-g_n\|_{\bL_{p',\theta'}(-r,s)} + \|x_d^{-1}(G-G_n)\|_{\bL_{p',\theta'}(-r,s)} + \|\phi\|_{L_{p',\theta'}} \right),
\end{align*}
which yields
\begin{equation} \label{eq6291710}
  \sup_{t\in(-r,s)} \|\bar{v}(t)\|_{L_{p',\theta'}} \leq N\|\phi\|_{L_{p',\theta'}},
\end{equation}
where $\bar{v}:=\tilde{v}-\phi\zeta$.
Let $v_n\in C_c^\infty((-\infty,0]\times\bR^d_+)$ such that $v_n\to v$ in $\cH_{p,\theta}^1(-r,0)$. Then one can also find $f_n,F_n$ such that $\cL v_n=f_n+ D_i F_{ni}$ with $f_n,x_d^{-1}F_n\to0$ in $\bL_{p,\theta}(-r,0)$ as $n\to \infty$. Denote $\cP:=\cL - a_0\partial_t$, and $\cP^*:=\cL^* + a_0\partial_t$. Then $\cP^*$ is the dual operator of $\cP$. Since
\begin{equation*}
  \cL^*\bar{v}_n=g_n+D_i G_{ni} - \cL^*(\phi\zeta)=g_n-g+D_i(G_{ni}-G_i),
\end{equation*}
 by using a duality relation, and the condition $\tilde{v}_n(s,\cdot)=0$, 
\begin{align*}
  -\int_{\bR^d_+} a_0 v_n(s,\cdot) \zeta(s) \phi \, dx &= \int_{\bR^d_+} a_0 v_n(s,\cdot) \bar{v}_n(s,\cdot) dx
  \\
  &=\int_{-r}^s \int_{\bR^d_+} a_0 \left((v_n\zeta)_t \bar{v}_n + v_n\zeta \bar{v}_{nt} \right) dxdt
  \\
  &=\int_{-r}^s \int_{\bR^d_+} \left( a_0(v_n\zeta)_t \bar{v}_n + v_n\zeta \cP^*\bar{v}_n\right) dxdt
    \\
  &\quad+ \int_{-r}^s \int_{\bR^d_+} \left( v_n\zeta (g-g_n) - (G_i-G_{ni})D_i(v_n\zeta) \right) dxdt
  \\
  &=\int_{-r}^s \int_{\bR^d_+} \left( a_0(v_n\zeta)_t \bar{v}_n + \cP(v_n\zeta) \bar{v}_n\right) dxdt
  \\
  &\quad+ \int_{-r}^s \int_{\bR^d_+} \left( v_n\zeta (g-g_n) - (G_i-G_{ni})D_i(v_n\zeta) \right) dxdt
  \\
  &=\int_{-r}^s \int_{\bR^d_+} \left(\left(f_n\zeta + v_n\zeta_t\right) \bar{v}_n - F_{ni}\zeta D_i\bar{v}_n\right) dxdt
  \\
  &\quad+ \int_{-r}^s \int_{\bR^d_+} \left( v_n\zeta (g-g_n) - (G_i-G_{ni})D_i(v_n\zeta) \right) dxdt.
\end{align*}
Letting $n\to\infty$ and applying H\"older's inequality and \eqref{eq6291710},
\begin{align*}
  \left| \int_{\bR^d_+} a_0(s,\cdot)v(s,\cdot) \phi \, dx \right| &\leq \int_{-r}^s \int_{\bR^d_+} |\zeta_t v \bar{v}| dxdt
  \\
   &\leq \|\zeta_t v\|_{\bL_{1,p,\theta}(-r,0)} \sup_{t\in(-r,0)}\|\bar{v}(t,\cdot)\|_{L_{p',\theta'}}
  \\
  &\leq N\|\zeta_t v\|_{\bL_{1,p,\theta}(-r,0)} \|\phi\|_{L_{p',\theta'}}.
\end{align*}
Since $\phi\in C_c^\infty(\bR^d_+)$ is arbitrary, we have
\begin{align*}
  \sup_{t\in(t_0-r/2,t_0)} \|v(t)\|_{L_{p,\theta}} \leq N\|\zeta_t v\|_{\bL_{1,p,\theta}(-r,0)} \leq Nr^{-1}\|v\|_{\bL_{1,p,\theta}(-r,0)},
\end{align*}
which yields \eqref{eq5231605}. The lemma is proved.
\end{proof}

We introduce the maximal function in the time variable. For any functions $h$ defined on $(-\infty,T)$, we denote
\begin{equation*}
  \bM h (t):= \sup_{t\in (s-r,r)} \aint_{s-r}^s |h(r)| dr.
\end{equation*}

Let $\gamma\in(0,1)$, and denote
\begin{equation} \label{eq5291343}
  \cA(s)=\{t<T: \|u(t,\cdot)\|_{L_{p,\theta}} >s \},
\end{equation}
and
\begin{align} \label{eq5291344}
  \cB_\gamma(s) &=\{ t<T: (\bM \|u(t,\cdot)\|_{L_{p,\theta}}^p)^{1/p} \nonumber
  \\
  &\qquad + \frac{\gamma^{-1/p}}{1+\lambda} (\bM \|f(t,\cdot)\|_{L_{p,\theta}}^p)^{1/p} + \frac{\gamma^{-1/p}}{1+\sqrt{\lambda}} (\bM \|x_d^{-1}F(t,\cdot)\|_{L_{p,\theta}}^p)^{1/p} >s\}.
\end{align}
In $\bR$, we write
\begin{equation} \label{eq5241558}
  \cC_R(t):=(t-R,t+R), \quad \widehat{\cC}_R:=\cC_R\cap\{t\leq T\}.
\end{equation}

\begin{lemma} \label{lem5291338}
Let $R>0$, $\gamma\in(0,1)$, and the assumptions of Lemma \ref{lem5231637} $(i)$ be satisfied. Then there exists a sufficiently large constant $\kappa=\kappa(d,p,\theta,\nu,K)>1$ such that for any $t_0\leq T$ and $s>0$, if
\begin{equation} \label{eq5241425}
  |\cC_{R/4}(t_0)\cap \cA(\kappa s)| \geq \gamma |\cC_{R/4}(t_0)|,
\end{equation}
then
\begin{equation*}
  \widehat{\cC}_{R/4}(t_0) \subset \cB_\gamma(s).
\end{equation*}

If the assumptions of Lemma \ref{lem5231637} $(ii)$ are satisfied instead of $(i)$, then the above assertion holds true where the dependencies of $\kappa$ are replaced by $d,p,\theta,n_b,n_{\hat{b}},n_c,\nu$, and $K$.
\end{lemma}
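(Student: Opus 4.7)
The plan is a standard ``good-$\lambda$'' argument by contrapositive, built on Lemma~\ref{lem5231637}. Suppose, for contradiction, that there is some $t_1\in\widehat{\cC}_{R/4}(t_0)\setminus\cB_\gamma(s)$. By the definition \eqref{eq5291344}, each of the three maximal functions at $t_1$ is controlled:
\[
(\bM\|u(\cdot)\|_{L_{p,\theta}}^p)(t_1)\le s^p,\quad (\bM\|f(\cdot)\|_{L_{p,\theta}}^p)(t_1)\le \gamma(1+\lambda)^p s^p,\quad (\bM\|x_d^{-1}F(\cdot)\|_{L_{p,\theta}}^p)(t_1)\le \gamma(1+\sqrt{\lambda})^p s^p.
\]
I would then pick $t_0^\ast\in(-\infty,T]$ and $r\in[R/2,R]$ so that the one-sided interval $I:=(t_0^\ast-r,t_0^\ast)$ contains both $t_1$ and $\widehat{\cC}_{R/4}(t_0)$, the smaller interval $(t_0^\ast-r/2,t_0^\ast)$ still contains $\widehat{\cC}_{R/4}(t_0)$, and $|I|\le 2|\cC_{R/4}(t_0)|$. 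The canonical choice is $t_0^\ast=\min(t_0+R/4,T)$ with $r$ adjusted accordingly; the two regimes $t_0+R/4\le T$ and $t_0+R/4>T$ are handled separately but identically in spirit. Since $I$ is a valid test interval for $\bM$ at $t_1$, the displayed inequalities give $\aint_I \|u\|_{L_{p,\theta}}^p\le s^p$ and similarly for $f$ and $x_d^{-1}F$ (with the appropriate $\gamma$ factors).

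Next, I apply Lemma~\ref{lem5231637}(i) with this $(t_0^\ast,r)$ to decompose $u=v+w$. Plugging the averaged bounds into \eqref{eq5231452} yields
\[
\Bigl(\aint_I\|w(t)\|_{L_{p,\theta}}^p\,dt\Bigr)^{1/p}\le N\gamma^{1/p}s,
\]
and from \eqref{eq5231605} together with the triangle inequality $\|v\|\le\|u\|+\|w\|$ and $\gamma<1$,
\[
N_0s:=\sup_{(t_0^\ast-r/2,t_0^\ast)}\|v(t)\|_{L_{p,\theta}}\le N\Bigl(\aint_I\|u\|^p\Bigr)^{1/p}+N\Bigl(\aint_I\|w\|^p\Bigr)^{1/p}\le N(1+\gamma^{1/p})s\le 2Ns,
\]
so $N_0$ depends only on the parameters listed in the statement. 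I then set $\kappa:=\max(2N_0,\,2^{1+1/p}N)$, a constant with the required dependencies. For any $t\in \widehat{\cC}_{R/4}(t_0)\cap\cA(\kappa s)$, since $\widehat{\cC}_{R/4}(t_0)\subset(t_0^\ast-r/2,t_0^\ast)$,
\[
\kappa s<\|u(t)\|_{L_{p,\theta}}\le \|v(t)\|_{L_{p,\theta}}+\|w(t)\|_{L_{p,\theta}}\le N_0s+\|w(t)\|_{L_{p,\theta}},
\]
giving $\|w(t)\|_{L_{p,\theta}}>\kappa s-N_0s\ge N_0s$. Chebyshev's inequality on $I\supset \widehat{\cC}_{R/4}(t_0)$ together with the averaged bound on $w$ gives
\[
|\widehat{\cC}_{R/4}(t_0)\cap\cA(\kappa s)|\,(N_0s)^p\le \int_I\|w\|_{L_{p,\theta}}^p\,dt\le N^p\gamma s^p\,|I|\le 2N^p\gamma s^p\,|\cC_{R/4}(t_0)|,
\]
and the choice $\kappa\ge 2^{1+1/p}N$ ensures $2N^p/N_0^p<1$, so
\[
|\cC_{R/4}(t_0)\cap\cA(\kappa s)|<\gamma\,|\cC_{R/4}(t_0)|,
\]
contradicting \eqref{eq5241425}. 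Case (ii) is identical with $N_0$ and $N$ depending additionally on $n_b,n_{\hat b},n_c$.

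The main technical obstacle is the geometric bookkeeping: because $\bM$ is one-sided and Lemma~\ref{lem5231637} produces the sup bound on $(t_0^\ast-r/2,t_0^\ast)$ while controlling averages on $(t_0^\ast-r,t_0^\ast)$, the choice of $(t_0^\ast,r)$ must simultaneously (a) place $t_1$ inside the averaging interval, (b) cover all of $\widehat{\cC}_{R/4}(t_0)$ by the supremum interval, and (c) keep $|I|$ comparable to $|\cC_{R/4}(t_0)|$, all while respecting $t_0^\ast\le T$ when $t_0$ is close to $T$. Once this geometric setup is fixed, the remaining quantitative estimate is essentially a one-line Chebyshev computation, and the choice of $\kappa$ absorbs all the constants produced by Lemma~\ref{lem5231637}.
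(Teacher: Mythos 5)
Your proof is correct and follows essentially the same route as the paper's: argue by contrapositive from a point of $\widehat{\cC}_{R/4}(t_0)\setminus\cB_\gamma(s)$, apply the decomposition $u=v+w$ of Lemma \ref{lem5231637} on an interval anchored at $\min\{t_0+R/4,T\}$, bound $v$ in sup and $w$ in average via the maximal functions, and finish with Chebyshev's inequality to contradict \eqref{eq5241425}. The only blemish is the Chebyshev constant: since you define $N_0$ as the actual ratio $\sup\|v\|_{L_{p,\theta}}/s$, it need not be bounded below and the claim $2N^p/N_0^p<1$ can fail, so you should instead use the threshold $(\kappa-2N)s$ (replacing $N_0$ by its upper bound $2N$) and take $\kappa$ larger than $2N+2^{1/p}\cdot 2N$, which is exactly how the paper fixes $\kappa$.
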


\begin{proof}
By dividing the equation \eqref{eq4261628} by $s$, we may assume that $s=1$. Suppose that there is $r\in \widehat{\cC}_{R/4}(t_0)$ such that
\begin{align} \label{eq5240102}
   &(\bM \|u(t,\cdot)\|_{L_{p,\theta}}^p)^{1/p}(r) \nonumber
   \\
   &\quad+ \frac{\gamma^{-1/p}}{1+\lambda} (\bM \|f(t,\cdot)\|_{L_{p,\theta}}^p)^{1/p}(r) + \frac{\gamma^{-1/p}}{1+\sqrt{\lambda}} (\bM \|x_d^{-1}F(t,\cdot)\|_{L_{p,\theta}}^p)^{1/p}(r) \leq 1.
\end{align}
Let $t_1:=\min\{t_0+R/4,T\}$. Then,
\begin{equation*}
  r\in \widehat{\cC}_{R/4}(t_0)\subset \cC_{R/2}(t_1).
\end{equation*}
By Lemma \ref{lem5231637} and \eqref{eq5240102}, there is $v,w\in \cH_{q,p,\theta}^1(T)$ such that $u=v+w$, and
 \begin{align*}
    &\left(\aint_{\cC_{R/2}(t_1)} \|w(t)\|_{L_{p,\theta}}^p dt\right)^{1/p} \leq N \left(\aint_{\cC_{R}(t_1)} \|w(t)\|_{L_{p,\theta}}^p dt\right)^{1/p}
    \\
    &\leq N \left( \frac{1}{1+\lambda}\aint_{\cC_{R}(t_1)} \|f(t)\|_{L_{p,\theta}}^p dt + \frac{1}{1+\sqrt{\lambda}} \aint_{\cC_{R}(t_1)} \|x_d^{-1}F(t)\|_{L_{p,\theta}}^p dt\right)^{1/p}
    \\
    &\leq \frac{N}{1+\lambda}(\bM \|f(t,\cdot)\|_{L_{p,\theta}}^p)^{1/p}(r) + \frac{N}{1+\sqrt{\lambda}} (\bM \|x_d^{-1}F(t,\cdot)\|_{L_{p,\theta}}^p)^{1/p}(r) \leq N \gamma^{1/p},
  \end{align*}
  and
  \begin{align*}
    &\sup_{t\in \cC_{R/2}(t_1)} \|v(t)\|_{L_{p,\theta}} \leq N \left(\aint_{\cC_{R}(t_1)} \|v(t)\|_{L_{p,\theta}}^p dt\right)^{1/p}
    \\
    &\qquad \leq N \left(\aint_{\cC_{R}(t_1)} \|u(t)\|_{L_{p,\theta}}^p dt\right)^{1/p} + N \left(\aint_{\cC_{R}(t_1)} \|w(t)\|_{L_{p,\theta}}^p dt\right)^{1/p}
    \\
    &\qquad \leq N\bM \|u(t,\cdot)\|_{L_{p,\theta}}^p)^{1/p} + N \left(\aint_{\cC_{R}(t_1)} \|w(t)\|_{L_{p,\theta}}^p dt\right)^{1/p} 
    \\
    &\qquad \leq N(1+\gamma^{1/p}) \leq N=: N_0.
  \end{align*}
  By the triangle inequality and Chebyshev's inequality, for $\kappa>N_0$,
  \begin{align*}
    |\cC_{R/4}(t_0)\cap \cA(\kappa)| &= |\{t\in \widehat{\cC}_{R/4}(t_0); \|u(t,\cdot)\|_{L_{p,\theta}} > \kappa \}|
    \\
    &\leq |\{t\in \cC_{R/2}(t_1); \|u(t,\cdot)\|_{L_{p,\theta}} > \kappa \}|
    \\
    &\leq |\{t\in \cC_{R/2}(t_1); \|v(t,\cdot)\|_{L_{p,\theta}} > N_0 \}|
    \\
    &\quad + |\{t\in \cC_{R/2}(t_1); \|w(t,\cdot)\|_{L_{p,\theta}} > \kappa-N_0 \}|
    \\
    &=|\{t\in \cC_{R/2}(t_1); \|w(t,\cdot)\|_{L_{p,\theta}} > \kappa-N_0 \}|
    \\
    &\leq \int_{\cC_{R/2}(t_1)} (\kappa-N_0)^{-p} \|w(t,\cdot)\|_{L_{p,\theta}}^p dt
    \\
    &\leq N|\cC_{R/2}(t_1)|\gamma(\kappa-N_0)^{-p} \leq N|\cC_{R/4}(t_0)|\gamma(\kappa-N_0)^{-p}.
  \end{align*}
  Thus, by taking a sufficiently large $\kappa>1$ so that $N\gamma(\kappa-N_0)^{-p} < 1/2$, we have
  \begin{equation*}
    |\cC_{R/4}(t_0)\cap \cA(\kappa)|<\gamma |\cC_{R/4}(t_0)|.
  \end{equation*}
  This contradicts \eqref{eq5241425}. The lemma is proved.
\end{proof}

Next, we present the solvability result in weighted mixed-norm spaces when the coefficients satisfy Assumption \ref{ass_simple1} or Assumption \ref{ass_simple2}.

\begin{theorem} \label{thm_simple_div}
    Let $T\in(-\infty,\infty]$, $p,q\in(1,\infty)$ and $K_0$ be a constant such that $[\omega]_{A_q}\leq K_0$.

$(i)$ There exists $\lambda_0=\lambda_0(d,p,q,\theta,\nu,K,K_0)\geq0$ such that under Assumption \ref{ass_simple1}, the following assertions hold true. For any $\lambda\geq\lambda_0$, there is a unique solution $u\in \cH_{q,p,\theta,\omega}^1(T)$ to \eqref{maindiv}. Moreover, for this solution, we have
\begin{align} \label{eq4161346}
  &(1+\sqrt{\lambda})\|u\|_{\bL_{q,p,\theta,\omega}(T)} + \|x_dD_xu\|_{\bL_{q,p,\theta,\omega}(T)} \nonumber
  \\
  &\leq N \left( \|x_d^{-1} F\|_{\bL_{q,p,\theta,\omega}(T)} + \frac{1}{1+\sqrt{\lambda}} \|f\|_{\bL_{q,p,\theta,\omega}(T)} \right),
\end{align}
where $N=N(d,p,q,\theta,\nu,K,K_0,\nu)$.

$(ii)$ Let $n_b,n_{\hat{b}},n_c\in\bR$, and assume that the quadratic equation 
\begin{equation} \label{eq11111138}
  z^2+(1+n_b+n_{\hat{b}})z-n_c=0
\end{equation}
has two distinct real roots $\alpha<\beta$.
Then under Assumption \ref{ass_simple2}, the assertions $(i)$ hold with $\lambda_0=0$ and $\alpha p<\theta<\beta p$, where the dependencies of the constant $N$ are replaced by $d,p,q,\theta,n_b,n_{\hat{b}},n_c,\nu,K$, and $K_0$.
\end{theorem}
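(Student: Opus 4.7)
The plan is to first establish the a priori estimate \eqref{eq4161346} for $u \in C_c^\infty((-\infty,T]\times \bR^d_+)$, and then derive existence by the method of continuity, combined with the unmixed-norm existence from Lemma \ref{lem4192205}. Uniqueness will follow from the a priori estimate applied to the equation $\cL u = 0$.

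For the a priori estimate, the key is to convert the level-set information from Lemma \ref{lem5291338} into an $L_{q,\omega}$-bound via the ``crawling of ink spots'' lemma announced in Section \ref{sec_ink}. Specifically, for the level sets $\cA(s)$ and $\cB_\gamma(s)$ defined in \eqref{eq5291343}--\eqref{eq5291344}, Lemma \ref{lem5291338} says that whenever a small-density condition $|\cC_{R/4}(t_0) \cap \cA(\kappa s)| \geq \gamma |\cC_{R/4}(t_0)|$ holds, then $\widehat{\cC}_{R/4}(t_0) \subset \cB_\gamma(s)$. By the ink-spots lemma of Section \ref{sec_ink}, this implies a weighted measure estimate of the form
\begin{equation*}
  \omega(\cA(\kappa s)) \leq N_0 \gamma^{\sigma} \, \omega(\cB_\gamma(s))
\end{equation*}
for some $\sigma = \sigma(q, K_0) > 0$, where $N_0$ depends on $K_0 = [\omega]_{A_q}$. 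Integrating this inequality against $s^{q-1}\,ds$ over $(0,\infty)$ and using Fubini yields
\begin{equation*}
  \bigl\| \|u(t,\cdot)\|_{L_{p,\theta}} \bigr\|_{L_{q,\omega}(T)}^q \leq N \gamma^{\sigma} \bigl\| \|u\|_{L_{p,\theta}} + \tfrac{\gamma^{-1/p}}{1+\lambda}\|f\|_{L_{p,\theta}} + \tfrac{\gamma^{-1/p}}{1+\sqrt{\lambda}}\|x_d^{-1}F\|_{L_{p,\theta}} \bigr\|_{L_{q,\omega}(T)}^q,
\end{equation*}
where we have invoked the Hardy–Littlewood maximal inequality for $A_q(\bR)$ weights to eliminate the maximal functions $\bM$ (here is where the dependence on $K_0$ enters). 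Choosing $\gamma$ sufficiently small depending on $N,\sigma$ absorbs the first term on the right into the left, yielding
\begin{equation*}
  (1+\sqrt{\lambda})\|u\|_{\bL_{q,p,\theta,\omega}(T)} \leq N\bigl(\|x_d^{-1}F\|_{\bL_{q,p,\theta,\omega}(T)} + \tfrac{1}{1+\sqrt{\lambda}}\|f\|_{\bL_{q,p,\theta,\omega}(T)}\bigr),
\end{equation*}
after multiplying through by $(1+\sqrt{\lambda})$ using the extra decay afforded by the lower-order term bounds.

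The gradient estimate $\|x_d D_x u\|_{\bL_{q,p,\theta,\omega}(T)}$ is then obtained by the same scaling/partition-of-unity trick used in Lemma \ref{lem_high_div}: multiplying $u_r(t,x) = u(t,x/r)$ by a cut-off $\zeta(x_d)$ supported in $(2,3)$, applying the mixed-norm $\cH_{q,p,\omega}^1(T)$-estimate in the whole space $(-\infty,T)\times\bR^d$ (see Remark \ref{rem9162000}) for the reduced equation with large artificial $\bar{\lambda}$, integrating in $r$ against $r^{-\theta-d}\,dr$ via \eqref{eq4152140}, and absorbing $\frac{N}{\sqrt{\bar\lambda}}\|x_dD_xu\|_{\bL_{q,p,\theta,\omega}(T)}$ into the left-hand side by choosing $\bar\lambda$ large. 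The zeroth-order bound just derived controls the residual $\|u\|_{\bL_{q,p,\theta,\omega}(T)}$ term.

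For existence, we use the method of continuity. Define $\cL_\tau := \tau \cL + (1-\tau) \cL_0$, where $\cL_0$ is a simple reference operator (say with $a_{ij} = \delta_{ij}$, $a_0 = c_0 = 1$, and $b_i = \hat b_i = c = 0$ in case $(i)$, or with $a_{dd}$ constant satisfying the ratio condition \eqref{ratio} in case $(ii)$). Each $\cL_\tau$ satisfies the hypotheses of the theorem uniformly in $\tau$, so the a priori estimate \eqref{eq4161346} holds uniformly in $\tau \in [0,1]$. Solvability for $\cL_0$ in $\cH_{q,p,\theta,\omega}^1(T)$ follows from Lemma \ref{lem4192205} applied for the unmixed case $q=p$, $\omega=1$, combined with a density/approximation argument exploiting the denseness of $C_c^\infty((-\infty,T]\times\bR^d_+)$ in $\cH_{q,p,\theta,\omega}^1(T)$ and in $\bL_{q,p,\theta,\omega}(T)$.

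The main obstacle is the ink-spots passage: we must translate the purely geometric level-set implication of Lemma \ref{lem5291338} into a weighted $L_{q,\omega}$ inequality that survives the degeneracy of $\omega$. This requires the one-dimensional crawling-of-ink-spots lemma for $A_q$ weights in Section \ref{sec_ink}, which is where the dependence on the Muckenhoupt constant $K_0$ is generated, and careful bookkeeping of how $\gamma$ must be chosen small (depending on $d,p,q,\theta,\nu,K,K_0$, and in case $(ii)$ also on $n_b,n_{\hat b},n_c$) so that the absorption is legitimate.
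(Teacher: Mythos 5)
Your overall architecture (level sets via Lemma \ref{lem5291338}, the ink-spots lemma, absorption, then the scaling/partition argument for the gradient, method of continuity for existence) matches the paper's Step 1, but there is a genuine gap in the central step. The maximal function appearing in $\cB_\gamma(s)$ is $(\bM\|u(t,\cdot)\|_{L_{p,\theta}}^p)^{1/p}$, so after integrating the measure inequality against $s^{q-1}\,ds$ you must bound $\|(\bM g)^{1/p}\|_{L_{q,\omega}}$ by $\|g^{1/p}\|_{L_{q,\omega}}$ with $g=\|u(t,\cdot)\|_{L_{p,\theta}}^p$; this is boundedness of $\bM$ on $L_{q/p}(\omega\,dt)$, which requires $q/p>1$ and $\omega\in A_{q/p}(\bR)$ — a strictly stronger hypothesis than $\omega\in A_q$, and one that fails outright when $p\geq q$. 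Your sketch asserts the argument closes for all $p,q\in(1,\infty)$ by "the Hardy--Littlewood maximal inequality for $A_q(\bR)$ weights," which is not sufficient. The paper runs the level-set argument only for $p\in(1,p_0)$ with $p_0=p_0(K_0,q)<q$ chosen (via the openness of the $A_q$ condition) so that $\omega\in A_{q/p}$, and then needs three further steps you do not supply: the Rubio de Francia extrapolation theorem to make $p_0$ independent of $q$, a duality argument (with the dual operator $\cL^*$ and the dual exponents $p',\theta',\omega'$) to reach $p\in(p_0/(p_0-1),\infty)$, and complex interpolation of the solution operator to fill the middle range $p\in[p_0,p_0/(p_0-1)]$. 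Without these, the claimed estimate is only proved for a restricted set of exponents.

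A secondary, smaller issue: for existence you invoke "denseness of $C_c^\infty$ in $\cH_{q,p,\theta,\omega}^1(T)$" to upgrade the unmixed-norm solution from Lemma \ref{lem4192205} to the mixed-norm class, but the nontrivial point is showing that the solution produced in $\cH_{p,\theta}^1(T)$ actually belongs to $\cH_{q,p,\theta,\omega}^1(T)$ before the a priori estimate can be applied to it. The paper does this by mollifying in time, observing that for compactly supported data the solution has compact support in $t$ (via the estimate on short time intervals), and only then running the method of continuity simultaneously in both norms. Your sketch should make this membership step explicit, since the a priori estimate is vacuous for a function not known to lie in the space.
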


\begin{proof}
We present the proofs for $(i)$ and $(ii)$ together.
Let $\lambda_0\geq0$ be greater than $\lambda_0$ in both Lemmas \ref{lem4192205} and \ref{lem5231637}.
Note that if $f,F\in C_c^\infty((-\infty,T)\times\bR^d_+)$, then by Lemma \ref{lem4192205}, there is a solution $u\in \cH_{p,\theta}^1(T)$ to \eqref{maindiv}.
In Steps \textbf{1}-\textbf{4}, we show that this solution $u$ is in the space $\cH_{q,p,\theta,\omega}^1(T)$, and it satisfies \eqref{eq4161346}. Then, due to the denseness of $C_c^\infty((-\infty,T)\times\bR^d_+)$ in $\bL_{q,p,\theta,\omega}(T)$, we obtain the existence of solutions satisfying \eqref{eq4161346}.
Finally, in Step \textbf{5}, we prove the uniqueness result.

\textbf{1.} $q\in(1,\infty)$ and $p\in(1,p_0)$ for some $p_0=p_0(K_0,q)$.

Let $q\in(1,\infty)$ and $p\in(1,p_0)$, where $p_0=p_0(K_0,q)\in(1,q)$ such that $\omega\in A_{q/p}(\bR)$ for any $p\in(1,p_0)$. In this step, we show \eqref{eq4161346}.

Assume for the moment that $u\in \cH_{q,p,\theta,\omega}^1(T)$. 
Let $\lambda\geq \lambda_0$, where $\lambda_0$ is taken from Lemma \ref{lem5231637}. Also, let $\cA$ and $\cB_\gamma$ be defined as \eqref{eq5291343} and \eqref{eq5291344}, respectively. By Lemmas \ref{lem5291338} and \ref{lemcrawl},
\begin{equation*}
  \omega(\cA(\kappa s)) \leq N\gamma^\delta \omega(\cB_\gamma(s)),
\end{equation*}
where $\kappa=\kappa(d,p,n_b,n_{\hat{b}},n_c,\nu,K)>1$, $N=N(K_0)$, and $\delta=\delta(K_0)$.
Since 
\begin{equation*}
  \|u\|_{\bL_{q,p,\theta,\omega}(T)}^q = q\int_{0}^\infty \omega(\cA(s)) s^{q-1} ds = q\kappa^q \int_0^\infty \omega(\cA(\kappa s)) s^{q-1} ds,
\end{equation*}
by the weighted Hardy-Littlewood theorem (see e.g. \cite[Theorem 2.2]{DK18}),
\begin{align*}
  &\|u\|_{\bL_{q,p,\theta,\omega}(T)}^q \leq N\kappa^q \gamma^\delta \int_0^\infty \omega(\cB_\gamma(s)) s^{q-1} ds
  \\
  &\leq N\kappa^q \gamma^\delta \int_0^\infty  \omega\left(\left\{t<T: (\bM \|u(t,\cdot)\|_{L_{p,\theta}}^{p})^{1/p}>s/3 \right\}\right) s^{q-1}ds
  \\
  &\quad+ N\kappa^q \gamma^\delta \int_0^\infty  \omega\left(\left\{t<T: \frac{\gamma^{-1/p}}{1+\lambda} (\bM \|f(t,\cdot)\|_{L_{p,\theta}}^{p})^{1/p} >s/3 \right\}\right) s^{q-1}ds
  \\
  &\quad + N\kappa^q \gamma^\delta \int_0^\infty  \omega\left(\left\{t<T: \frac{\gamma^{-1/p}}{1+\sqrt{\lambda}} (\bM \|x_d^{-1}F(t,\cdot)\|_{L_{p,\theta}}^{p})^{1/p} >s/3 \right\}\right) s^{q-1}ds
  \\
  &\leq N\kappa^q \gamma^\delta \|u\|_{\bL_{q,p,\theta,\omega}(T)}^q 
  \\
  &\quad + N\kappa^q \gamma^{\delta-q/p} \left(\left(\frac{1}{1+\lambda}\right)^q\|f\|_{\bL_{q,p,\theta,\omega}(T)}^q + \left(\frac{1}{1+\sqrt{\lambda}}\right)^q \|x_d^{-1}F\|_{\bL_{q,p,\theta,\omega}(T)}^q \right).
\end{align*}
By taking a sufficiently small $\gamma>0$ such that $N\kappa^q \gamma^\delta<1/2$, we have
\begin{equation*}
  (1+\sqrt{\lambda})\|u\|_{\bL_{q,p,\theta,\omega}(T)} \leq N\left(\frac{1}{1+\sqrt{\lambda}}\|f\|_{\bL_{q,p,\theta,\omega}(T)} + \|x_d^{-1}F\|_{\bL_{q,p,\theta,\omega}(T)}\right).
\end{equation*}
This together with \eqref{eq4151444} yields \eqref{eq4161346} when $u\in \bL_{q,p,\theta,\omega}(T)$ (see also Remark \ref{rem1118} for the case $(ii)$).

Let us define
\begin{equation*}
  \cL_0u:= a_0 u_t- x_d^2\Delta u +x_dn_b D_du + x_d D_d(n_{\hat{b}} u) + n_cu + \lambda c_0u
\end{equation*}
if $a_0=a_0(x_d)$, and
\begin{equation*}
  \cL_0u:= a_0 u_t- x_d^2a_0\Delta u +x_dn_b a_0D_du + x_d D_d(n_{\hat{b}}a_0 u) + n_ca_0u + \lambda c_0u
\end{equation*}
if $a_0=a_0(t)$.
In particular, when $a_0=a_0(t)$, by dividing the equation by $a_0$, we can assume that $a_0=1$.
Suppose that $v\in\cH_{p,\theta}^1(T)$ is a solution to
\begin{equation*}
  \cL_0 v=D_i F_i + f.
\end{equation*}
We claim that $v\in\cH_{q,p,\theta,\omega}^1(T)$.
Let $\eta\in C_c^\infty((0,1))$ be a non-negative function with unit integral, and for any function $h$ defined on $\Omega_T$, denote
   \begin{equation} \label{eq10241430}
     h^{(\varepsilon)}(t,x):=\int_0^\infty h(t-\varepsilon s,x)\eta(s) ds, \quad \varepsilon\in(0,1).
   \end{equation}
Then $v^{(\varepsilon)}\in \cH_{p,\theta}^1(T)\cap L_{\infty}((-\infty,T);H_{p,\theta}^1)$ satisfies $\cL_0 v^{(\varepsilon)} = D_i F^{(\varepsilon)}_i + f^{(\varepsilon)}$.
Since $f,F\in C_c^\infty((-\infty,T)\times\bR^d_+)$, $F^{(\varepsilon)}$ and $f^{(\varepsilon)}$ also have compact supports in $t$. By applying \eqref{eq4191709} with a sufficiently small $T$, we deduce that $v^{(\varepsilon)}$ is compactly supported in $t\in(-\infty,T]$, which easily implies that $v^{(\varepsilon)}\in \cH_{q,p,\theta,\omega}^1(T)$. Thus, we have \eqref{eq4161346} with $v^{(\varepsilon)}$ instead of $u$. Letting $\varepsilon\to0$ in this inequality, the claim is proved.

  Lastly, we deal with equations with general coefficients by using the method of continuity.
For $\kappa\in[0,1]$ and $v\in \cH_{q,p,\theta,\omega}^1(T)\cap \cH_{p,\theta}^1(T)$, there is $w\in \cH_{q,p,\theta,\omega}^1(T)\cap \cH_{p,\theta}^1(T)$ so that
\begin{equation*}
  \cL_0w=\kappa(\cL_0-\cL)v + D_i F_i + f.
\end{equation*}
Moreover, by \eqref{eq4161346},
\begin{align*}
  (1+\sqrt{\lambda})\|w\| + \|x_dD_xw\|\leq N \left( \|x_d^{-1} F\| + \frac{1}{1+\sqrt{\lambda}} \|f\| \right) + N\kappa\left( \|v\| + \|x_dD_xv\| \right),
\end{align*}
where $\|\cdot\|$ is either the $\bL_{p,\theta}(T)$ norm or the $\bL_{q,p,\theta,\omega}(T)$ norm.
Thus, if $N\kappa<1/2$, then one can find a Cauchy sequence $u_n$ in both $\cH_{q,p,\theta,\omega}^1(T)$ and $\cH_{p,\theta}^1(T)$ such that
\begin{equation*}
  \cL_0u_{n+1}=\kappa(\cL_0-\cL)u_n + D_i F_i + f.
\end{equation*}
Letting $n\to \infty$, the common limit, say $\tilde{u}$, is in both $\cH_{q,p,\theta,\omega}^1(T)$ and $\cH_{p,\theta}^1(T)$, and it satisfies
\begin{equation*}
  (1-\kappa)\cL_0 \tilde{u} + \kappa \cL \tilde{u} = D_iF_i + f.
\end{equation*}
Thus, the claim of this step is proved for $(1-\kappa)\cL_0+\kappa\cL$ when $N\kappa<1/2$.
To obtain the desired result for $\kappa=1$,
one just needs to repeat the above argument finitely many times.

\textbf{2.} $q\in(1,\infty)$ and $p\in(1,p_0)$ for some $p_0=p_0(K_0)$.

Take any $q_0\in (1,\infty)$ and consider $p_0=p_0(K_0,q_0)$ introduced in Step \textbf{1}. Then we have \eqref{eq4161346} with $q_0$ and $p\in(1,p_0)$. Using this and the extrapolation theorem in \cite[Theorem 2.5]{DK18}, \eqref{eq4161346} holds for any $q\in(1,\infty)$ and $p\in (1,p_0)$. Thus, $p_0$ in Step \textbf{1} can be chosen independently of $q$.

\textbf{3.} $q\in(1,\infty)$ and $p\in (p_0/(p_0-1),\infty)$.

We use a duality argument to prove \eqref{eq4161346} for $q\in(1,\infty)$ and $p\in (p_0/(p_0-1),\infty)$, where $p_0$ is taken from Step \textbf{2}.

We first treat the case $T=\infty$. 
Let us consider the operator $\cL$ and its dual operator $\cL^*$, which are defined as \eqref{eq6201125} and \eqref{eq6201126}.
 Notice that the dual space of $\bL_{q,p,\theta,\omega}(T)$ is $\bL_{q',p',\theta',\omega'}(T)$ where
\begin{eqnarray*}
  q'=q/(q-1), \quad p'=p/(p-1), \quad \theta/p+\theta'/p'=1, \quad \omega'=\omega^{-1/(p-1)}.
\end{eqnarray*}
Here, $\omega'\in A_{p'}(\bR)$, $p'\in(1,p_0)$, and the condition $\alpha p<\theta <\beta p$ is equivalent to $-\beta p'+p'<\theta'<-\alpha p'+p'$. Let $G,g\in C_c^\infty(\bR^{d+1}_+)$ and find a solution $v\in \cH_{q',p',\theta',\omega'}^1(T)$ to $\cL^* v = D_i G_i + g$. Note that here we need to choose an appropriate $\lambda_0\geq0$.
 Since $u$ is a solution to \eqref{maindiv}, for the case $a_{dd}=a_{dd}(x_d)$,
\begin{align*}
  (u,D_i G_i + g)_{L_2(\bR^{d+1}_+)} = (u,\cL^* v)_{L_2(\bR^{d+1}_+)} = (D_i F_i+f,v)_{L_2(\bR^{d+1}_+)}.
\end{align*}
This together with the corresponding estimate for $v$ yields that
\begin{align*}
  &|(u,D_iG_i+g)_{L_2(\bR^{d+1}_+)}| \nonumber
  \\
  &\leq N(\|x_d^{-1}F\|_{\bL_{q,p,\theta,\omega}(T)}+\frac{1}{1+\sqrt{\lambda}}\|f\|_{\bL_{q,p,\theta,\omega}(T)}) \nonumber
  \\
  &\quad\times ((1+\sqrt{\lambda}) \|v\|_{\bL_{q',p',\theta',\omega'}(T)}+\|x_dD_xv\|_{\bL_{q',p',\theta',\omega'}(T)}) \nonumber
  \\
  &\leq N(\|x_d^{-1}F\|_{\bL_{q,p,\theta,\omega}(T)}+\frac{1}{1+\sqrt{\lambda}}\|f\|_{\bL_{q,p,\theta,\omega}(T)}) \nonumber
  \\
  &\quad \times\left(\|x_d^{-1}G\|_{\bL_{q',p',\theta',\omega'}(T)} + \frac{1}{1+\sqrt{\lambda}}\|g\|_{\bL_{q',p',\theta',\omega'}(T)}\right).
\end{align*}
 Thus, we have \eqref{eq4161346}. We also remark that the case $a_{dd}=a_{dd}(t)$ can be handled similarly.

Next we deal with the case $T<\infty$. Since $F,f\in C_c^\infty((-\infty,T)\times \bR^d_+)$, one can extend $F,f$ to all of $(-\infty,\infty)\times \bR^d_+$ by letting $F=0$ and $f=0$ in $[T,\infty)\times \bR^d_+$. Take a solution $v\in \cH_{p,\theta}^1(\infty)$ to \eqref{maindiv}. Then, by the uniqueness result from Lemma \ref{lem4192205}, we have $u=v$ for $t< T$. Thus, 
\begin{align*}
   \|u\|_{\bH^1_{q,p,\theta,\omega}(T)} \leq \|v\|_{\bH^1_{q,p,\theta,\omega}(\infty)}  &\leq N \left(\|x_d^{-1}F\|_{\bL_{q,p,\theta,\omega}(\infty)} + \frac{1}{1+\sqrt{\lambda}} \|f\|_{\bL_{q,p,\theta,\omega}(\infty)}\right)
  \\
  &= N \left(\|x_d^{-1}F\|_{\bL_{q,p,\theta,\omega}(T)} + \frac{1}{1+\sqrt{\lambda}}\|f\|_{\bL_{q,p,\theta,\omega}(T)}\right),
\end{align*}
which implies \eqref{eq4161346}.

\textbf{4.} $q\in(1,\infty)$ and $p\in[p_0,p_0/(p_0-1)]$.

Finally, we treat the case $p\in[p_0,p_0/(p_0-1)]$. Take $p_1\in(1,p_0)$ and $p_2\in(p_0/(p_0-1),\infty)$, and denote $\theta_1:=\theta p_1/p$ and $\theta_2:=\theta p_2/p$. Then, $\alpha p_i<\theta_i<\beta p_i$ for $i=1,2$.
 Choose $\kappa\in(0,1)$ such that
\begin{equation*}
  \kappa p_1 + (1-\kappa)p_2=p, \quad \kappa \theta_1 + (1-\kappa)\theta_2=\theta.
\end{equation*}
Since $[L_{p_1}(\bR^d),L_{p_2}(\bR^d)]_\kappa=L_{p}(\bR^d)$, one can use the representation \eqref{equivnorm} and the complex interpolation of the spaces (see e.g. \cite[Theorem 2.2.6]{HVVW16}) to get
\begin{equation*}
  [L_{p_1,\theta_1},L_{p_2,\theta_2}]_\kappa=L_{p,\theta},
\end{equation*}
where $[\cdot,\cdot]_\kappa$ denotes the complex interpolation space. Hence, again by the complex interpolation of the spaces,
\begin{align*}
   \bL_{q,p,\theta,\omega}(T) &= L_q((-\infty,T),\omega dt; L_{p,\theta}) = L_q((-\infty,T),\omega dt; [L_{p_1,\theta_1},L_{p_2,\theta_2}]) 
   \\
   &= [L_q((-\infty,T),\omega dt; L_{p_1,\theta_1}), L_q((-\infty,T),\omega dt; L_{p_2,\theta_2})].
 \end{align*}

We claim that if both $u_1\in \cH^1_{p_1,\theta_1}(T)$ and $u_2\in \cH^1_{p_2,\theta_2}(T)$ are solutions of \eqref{maindiv} with $F,f\in C_c^\infty((-\infty,T)\times\bR^d_+)$, then $u_1=u_2$. Indeed, following the proof of \cite[Theorem 4.3.12]{Klec}, $u_k$, introduced in the proof of Lemma \ref{lem4192205}, is in both $\mathring{\cH}_{p_1}^1(A_k)$ and $\mathring{\cH}_{p_2}^1(A_k)$. Thus, the proof of this lemma yields that claim.

By this claim and Steps \textbf{1}-\textbf{3}, $u \in \cH^1_{p_i,\theta_i}(T)$ ($i=1,2$) implies that $u$ is in both $\cH^1_{q,p_1,\theta_1,\omega}(T)$ and $\cH^1_{q,p_2,\theta_2,\omega}(T)$. 
 Thus, the solution operator is well defined from $\bL_{q,p_1,\theta_1,\omega}(T)+\bL_{q,p_2,\theta_2,\omega}(T)$ to $\cH^1_{q,p_1,\theta_1,\omega}(T)+\cH^1_{q,p_2,\theta_2,\omega}(T)$.
Thus, by the complex interpolation of operators (see e.g. \cite[Theorem C.2.6]{HVVW16}), for $p\in[p_0,p_0/(p_0-1)]$, we have the existence of solutions satisfying \eqref{eq4161346}. 

\textbf{5.} Uniqueness.

Let us consider $\cL_0$, which is introduced in Step \textbf{1}.
   Assume that $u\in \cH_{q,p,\theta,\omega}^1(T)$ is a solution to
   \begin{equation*}
     \cL_0u=0.
   \end{equation*}
   We take a cut-off function $\zeta_n$ on $(-\infty,T)$ such that $0\leq \zeta_n\leq 1$, $\zeta_n=1$ on $(T-n,T)$, $\zeta_n=0$ on $(-\infty,T-n-1)$, and $|\zeta_{nt}|\leq N$. Then, $u^{(\varepsilon)}\zeta_n\in \cH_{p,\theta}^1(T)$, and it satisfies
   \begin{equation*}
   \cL_0(u^{(\varepsilon)}\zeta_n)= u^{(\varepsilon)}\zeta_{nt}.
   \end{equation*}
   Here, $u^{(\varepsilon)}$ is defined as \eqref{eq10241430}.
   Since $u^{(\varepsilon)}\zeta_{nt} \in \bL_{p,\theta}(T)\cap \bL_{q,p,\theta,\omega}(T)$, we can apply the above Steps \textbf{1}-\textbf{4} to obtain
   \begin{equation*}
     \|u^{(\varepsilon)}\zeta_n\|_{\bH_{q,p,\theta,\omega}^1(T)} \leq \|u^{(\varepsilon)}\zeta_{nt}\|_{\bL_{q,p,\theta,\omega}(T)}.
   \end{equation*}
   Since the right-hand side converges to $0$ as $n$ goes to $\infty$, $u^{(\varepsilon)}=0$, which easily implies we have $u=0$.

   For equations with more general coefficients, we use the method of continuity.
Assume that for $\kappa\in[0,1]$ and $u\in \cH_{q,p,\theta,\omega}^1(T)$,
   \begin{equation*}
     (1-\kappa)\cL_0 u + \kappa \cL u=0.
   \end{equation*}
   We can rewrite this equation into $\cL_0u=\kappa(\cL_0-\cL)u$. Since we have the uniqueness result for $\cL_0$, one can apply \eqref{eq4161346} to get
   \begin{equation*}
     \|u\|_{\bH_{q,p,\theta,\omega}^1(T)} \leq N\frac{\kappa}{1+\sqrt{\lambda}} \|u\|_{\bH_{q,p,\theta,\omega}^1(T)}.
   \end{equation*}
   Thus, if we choose $\kappa\in(0,1)$ so that $N\kappa<1/2$, then $u=0$, which implies the uniqueness result for $(1-\kappa)\cL_0 + \kappa \cL$. By repeating this argument finitely many times, we obtain the uniqueness result for $\kappa=1$.
The lemma is proved.
\end{proof}

\begin{remark} \label{rem1101}
  As described in Step \textbf{4} of the proof of Theorem \ref{thm_simple_div}, if $f,F\in C_c^\infty(\Omega_T)$, then a solution $u$ to \eqref{maindiv} is independent of $q,p,\theta$, and $\omega$.
\end{remark}

\mysection{Equations with partially mean oscillation coefficients} \label{sec_bmo}

\begin{lemma} \label{lem_supp_div}
  Let $T\in(-\infty,\infty]$, $\rho_0\in(1/2,1)$, $\gamma_0>0$, $p\in(1,\infty)$ and $K_0$ be a constant such that $[\omega]_{A_p}\leq K_0$. Suppose that $u\in \cH_{p,\theta,\omega}^1(T)$ satisfies 
  \begin{equation*}
  \sL_pu +\lambda c_0 u = D_i F_i +f,
  \end{equation*}
  where $x_d^{-1}F,f\in \bL_{p,\theta,\omega}(T)$, and $u, F$, and $f$ are compactly supported on $(-\infty,T]\times B_{\rho_0}(x_{0})$ for some $x_0\in \bR^d_+$ with $x_{0d}=1$.

$(i)$ 
Let $p_1\in(1,p)$. Then there exists $\lambda_0=\lambda_0(d,p,p_1,\theta,\nu,K,K_0)$ such that under Assumption \ref{ass_lead} $(\rho_0,\gamma_0)$, the following assertion holds.
For any $\lambda\geq\lambda_0$, and $\varepsilon>0$,
  \begin{align} \label{eq4222332}
    &(1+\sqrt{\lambda})\|u\|_{\bL_{p,\theta,\omega}(T)} \nonumber
    \\
    &\leq (\varepsilon + N_{\rho_0,\varepsilon} \gamma_0^{(p-p_1)/pp_1}) ((1+\sqrt{\lambda})\|u\|_{\bL_{p,\theta,\omega}(T)} + \|x_dD_xu\|_{\bL_{p,\theta,\omega}(T)}) \nonumber
    \\
    &\quad + N \left(\|x_d^{-1}F\|_{\bL_{p,\theta,\omega}(T)} + \frac{1}{1+\sqrt{\lambda}}\|f\|_{\bL_{p,\theta,\omega}(T)}\right),
  \end{align}
  where $N$ depends only on $d,p,p_1,\theta,\nu,K$, and $K_1$, and $N_{\rho_0,\varepsilon}$ depends only on $d,p,p_1,\theta,\nu,K,K_1$, and $\varepsilon$.

  $(ii)$ Let $n_b,n_{\hat{b}},n_c\in\bR$ and the quadratic equation \eqref{eq11111138} has two distinct real roots $\alpha$ and $\beta$. Let $p_1\in(1,p)$ and $\theta\in(\alpha p,\beta p)$ such that $\alpha p_1<\theta<\beta p_1$.
   Then under Assumption \ref{ass_coeff} $(\rho_0,\gamma_0)$, the assertion in $(i)$ holds with $\lambda_0=0$, where $N$ depends only on $d,p,p_1,\theta,n_b,n_{\hat{b}},n_c,\nu,K,\nu$, and $K_0$, and $N_{\rho_0,\varepsilon}$ depends only on $d,p,p_1,\theta,n_b,n_{\hat{b}},n_c,\nu,K,K_0,\rho_0$, and $\varepsilon$.
\end{lemma}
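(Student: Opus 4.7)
The plan is to freeze the leading coefficients at the point $x_0$, reduce the equation to one with simple coefficients plus a perturbation, estimate the perturbation via Hölder's inequality at the spatial scale $p_1 < p$ to exploit the BMO hypothesis, and conclude by a Sobolev/Ehrling-type interpolation on the bounded support of $u$. Since $\mathrm{supp}(u) \subset (-\infty, T] \times B_{\rho_0}(x_0)$ with $x_{0d} = 1$, the coordinate $x_d$ lies in $[1-\rho_0, 1+\rho_0]$ on $\mathrm{supp}(u)$, so $x_d^{\theta-1}$ and factors of $x_d^{\pm 1}$ are bounded above and below there, and weighted and unweighted spatial norms are equivalent on the support (with constants depending on $\rho_0$).

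Let $\cL_0$ be the operator obtained by replacing $a_{ij}, c_0$ (and, in case $(ii)$, also $b_i, \hat b_i, c$) by the frozen coefficients $[a_{ij}]_{\rho_0,x_0}$, $[c_0]_{\rho_0,x_0}$ (resp.\ $[b_i]_{\rho_0,x_0}$, etc.) from Assumption \ref{ass_lead} (resp.\ \ref{ass_coeff}), and setting $b_i = \hat b_i = c = 0$ in $\cL_0$ for part $(i)$; then $\cL_0$ satisfies Assumption \ref{ass_simple1} or \ref{ass_simple2}. Writing the original equation as $\cL_0 u = D_i \tilde F_i + \tilde f$, the modified forcing absorbs (a) oscillation-type terms $x_d^2(a_{ij} - [a_{ij}]) D_j u$, $\lambda([c_0] - c_0) u$, and (in $(ii)$) $x_d(b_i - [b_i]) D_i u$, etc.; and (b) in part $(i)$, the raw lower-order terms $-x_d \hat b_i u, -x_d b_i D_i u, -cu$. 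Theorem \ref{thm_simple_div} applied to $\cL_0$ with spatial exponent $p_1$, time exponent $p$, and weight $\omega$ (taking $\theta_1 = \theta$, so that in $(ii)$ the hypothesis $\alpha p_1 < \theta < \beta p_1$ ensures the required solvability range) gives
\begin{equation*}
(1+\sqrt{\lambda})\|u\|_{\bL_{p,p_1,\theta,\omega}(T)} + \|x_d D_x u\|_{\bL_{p,p_1,\theta,\omega}(T)} \leq N \|x_d^{-1}\tilde F\|_{\bL_{p,p_1,\theta,\omega}(T)} + \frac{N}{1+\sqrt{\lambda}}\|\tilde f\|_{\bL_{p,p_1,\theta,\omega}(T)}.
\end{equation*}

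For each oscillation factor, Hölder's inequality in the spatial variable with conjugate exponents $(p/p_1, p/(p-p_1))$, combined with the bound $|a - [a]|^{pp_1/(p-p_1)} \leq N|a-[a]|$ (using $|a-[a]| \leq 2\nu^{-1}$) and the mean-oscillation hypothesis $\aint_{B_{\rho_0}(x_0)} |a - [a]|\,dx < \gamma_0$, yields at each time $t$
\begin{equation*}
\|(a-[a]) g(t,\cdot)\|_{L_{p_1,\theta}} \leq N \gamma_0^{(p-p_1)/(pp_1)} \|g(t,\cdot)\|_{L_{p,\theta}}
\end{equation*}
for $g = x_d D_x u$ or $u$, and integrating in $t$ against $\omega$ lifts this to the mixed-norm setting. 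The raw lower-order terms in part $(i)$ are bounded by $N K \|u\|_{\bL_{p,p_1,\theta,\omega}}$ and $NK \|x_d D_x u\|_{\bL_{p,p_1,\theta,\omega}}$, and by Hölder on the bounded spatial support by the corresponding $\bL_{p,\theta,\omega}$-quantities times a constant $N_{\rho_0} K$; for $\lambda \geq \lambda_0$ sufficiently large these are absorbed, with $\lambda_0$ chosen so that $N_{\rho_0} K/(1+\sqrt{\lambda_0})$ is dominated by the target small coefficient.

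Finally, to upgrade the $\bL_{p,p_1,\theta,\omega}$-control of $u$ to the $\bL_{p,\theta,\omega}$-norm that appears on the left side of \eqref{eq4222332}, we apply an Ehrling/Gagliardo-Nirenberg interpolation on the bounded support: for any $\varepsilon > 0$,
\begin{equation*}
\|u(t,\cdot)\|_{L_{p,\theta}} \leq \varepsilon \|x_d D_x u(t,\cdot)\|_{L_{p,\theta}} + C_{\rho_0,\varepsilon}\|u(t,\cdot)\|_{L_{p_1,\theta}},
\end{equation*}
valid since $x_d \approx 1$ on $\mathrm{supp}(u)$. Combining the Ehrling bound, the solvability estimate for $\cL_0$, the perturbation estimates above, and the equivalence of weighted and unweighted norms on the support, then collecting terms, yields \eqref{eq4222332}. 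The main technical delicacy is coordinating the $\lambda$-scaling so that the coefficient multiplying $((1+\sqrt{\lambda})\|u\|_{\bL_{p,\theta,\omega}} + \|x_d D_x u\|_{\bL_{p,\theta,\omega}})$ comes out as $\varepsilon + N_{\rho_0,\varepsilon}\gamma_0^{(p-p_1)/(pp_1)}$ rather than a weaker bound with extra factors of $1+\sqrt{\lambda}$; this is achieved by rescaling the Ehrling parameter and, in part $(i)$, by using the $\lambda_0$-largeness to absorb the $K$-dependent residue, exploiting that Theorem \ref{thm_simple_div} places $(1+\sqrt{\lambda})$ on $\|u\|$ but not on $\|x_d D_x u\|$.
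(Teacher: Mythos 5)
Your proposal follows essentially the same route as the paper's proof: freeze the coefficients at $x_0$, invoke the simple-coefficient mixed-norm result (Theorem \ref{thm_simple_div}) with spatial exponent $p_1<p$, bound the oscillation terms by H\"older's inequality on the compact support together with the smallness of $\gamma_0$, upgrade from $L_{p_1}$ to $L_p$ in space via Gagliardo--Nirenberg/Ehrling interpolation, and in part $(i)$ absorb the unstructured lower-order terms by choosing $\lambda_0$ large. The only real difference is organizational: the paper first solves $\cL_0 v=D_iF_i+f$ and applies the $(p,p_1)$-estimate to $w=u-v$ (so the data are estimated in $\bL_{p,\theta,\omega}$ through $v$), whereas you apply the a priori estimate directly to $u$ with the data and oscillation terms lumped into $\tilde F,\tilde f$ --- legitimate here since the compact support places $u$, $F$, $f$ in the $(p,p_1)$-mixed class, at the harmless cost of an extra $\rho_0$-dependence in the constant multiplying the data.
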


\begin{proof}
We first assume that the coefficients satisfy one of the following:
\begin{itemize}
  \item $(a_{ij})$ and $c_0$ satisfy Assumption \ref{ass_lead} $(\rho_0,\gamma_0)$, and $(b_i),(\hat{b}_i)$, and $c$ are zero,

  \item $(a_{ij}),(b_i),(\hat{b}_i),c$ and $c_0$ satisfy Assumption \ref{ass_coeff} $(\rho_0,\gamma_0)$.
\end{itemize}
Then we can take the coefficients $[a_{ij}]_{\rho_0,x_0}, [b_{i}]_{\rho_0,x_0}, [\hat{b}_{i}]_{\rho_0,x_0}$, $[c]_{\rho_0,x_0}$, and $[c_0]_{\rho_0,x_0}$ so that these along with $a_0$ satisfy Assumption \ref{ass_simple1} or Assumption \ref{ass_simple2}. Here, when Assumption \ref{ass_lead} $(\rho_0,\gamma_0)$ is satisfied, we choose $[b_{i}]_{\rho_0,x_0}=[\hat{b}_{i}]_{\rho_0,x_0}=[c]_{\rho_0,x_0}=0$.
  Let $\lambda_0\geq0$ be taken from Theorem \ref{thm_simple_div}.
  By Theorem \ref{thm_simple_div}, there exists a solution $v\in \cH_{p,\theta,\omega}^1(T)$ to
  \begin{equation*}
    \cL_0v=D_iF_i+f,
  \end{equation*}
  where
  \begin{align*} 
    \cL_0 v &:= a_0v_t - x_d^2 D_i([a_{ij}]_{\rho_0,x_0} D_{j}v) + x_d [b_{i}]_{\rho_0,x_0}D_iv 
    \\
    &\quad+ x_dD_i([\hat{b}_{i}]_{\rho_0,x_0} v) +[c]_{\rho_0,x_0}v + \lambda [c_0]_{\rho_0,x_0} v.
  \end{align*}
  Here, we also have
  \begin{align} \label{eq4241018}
    (1+\sqrt{\lambda})\|v\|_{\bL_{p,\theta,\omega}(T)} \leq N \left(\|x_d^{-1}F\|_{\bL_{p,\theta,\omega}(T)} + \frac{1}{1+\sqrt{\lambda}}\|f\|_{\bL_{p,\theta,\omega}(T)} \right).
  \end{align}
  Note that $x_d^{-1}F,f\in \bL_{p,p_1,\theta,\omega}(T)$ since $F$ and $f$ are compactly supported on $(-\infty,T]\times B_{\rho_0}(x_{0})$. Thus, by Remark \ref{rem1101}, $v$ is also in the space $\cH_{p,p_1,\theta,\omega}^1(T)$.
  
  Define $w:=u-v$, which satisfies
  \begin{align*}
    \cL_0w &= x_d^2D_i\left((a_{ij}-[a_{ij}]_{\rho_0,x_0})D_ju \right) - x_d (b_i - [b_{i}]_{\rho_0,x_0}) D_iu 
    \\
    &\quad - x_dD_i\left( (\hat{b}_i - [\hat{b}_{i}]_{\rho_0,x_0}) u\right) - (c-[c]_{\rho_0,x_0})u -\lambda (c_0-[c_0]_{\rho_0,x_0})u.
  \end{align*}
  Since $w\in \cH_{p,p_1,\theta,\omega}^1(T)$, by applying \eqref{eq4161346} with $(p,p_1)$ instead of $(q,p)$,
  \begin{align} \label{eq5131514}
  (1+\sqrt{\lambda})\|w\|_{\bL_{p,p_1,\theta,\omega}(T)} &\leq N \|(a_{ij}-[a_{ij}]_{\rho_0,x_0})x_dD_{x}u\|_{\bL_{p,p_1,\theta,\omega}(T)} \nonumber
  \\
  &\quad + N \|(b_{i}-[b_{i}]_{\rho_0,x_0})x_dD_{x}u\|_{\bL_{p,p_1,\theta,\omega}(T)} \nonumber
  \\
  &\quad + N \|(\hat{b}_{i}-[\hat{b}_{i}]_{\rho_0,x_0}) u\|_{\bL_{p,p_1,\theta,\omega}(T)} \nonumber
  \\
  &\quad + N \|(c-[c]_{\rho_0,x_0}) u\|_{\bL_{p,p_1,\theta,\omega}(T)} \nonumber
  \\
  &\quad + N(1+\sqrt{\lambda}) \|(c_0-[c_0]_{\rho_0,x_0}) u\|_{\bL_{p,p_1,\theta,\omega}(T)}.
  \end{align}
  Here, for the last term, we used $\lambda/(1+\sqrt{\lambda}) \leq 1+\sqrt{\lambda}$.
Since $supp \, u\subset (-\infty,T]\times B_{\rho_0}(x_{0d})$ and $1-\rho_0\leq x_d\leq 1+\rho_0$ in $B_{\rho_0}(x_{0})$, by H\"older's inequality, and Assumption \ref{ass_lead} $(\rho_0,\gamma_0)$ or Assumption \ref{ass_coeff} $(\rho_0,\gamma_0)$,
\begin{align*}
  &\|(a_{ij}-[a_{ij}]_{\rho_0,x_0})x_dD_{x}u\|_{\bL_{p,p_1,\theta,\omega}(T)} \nonumber
  \\
  &\leq \left(\sup_{t\leq T}\int_{B_{\rho_0}(x_0)} |a_{ij}-[a_{ij}]_{\rho_0,x_0}|^{p_1q} x_d^{\theta-1} dx\right)^{1/p_1q} \|x_d D_xu\|_{\bL_{p,\theta,\omega}(T)} \nonumber
  \\
  &\leq N \left( \sup_{t\leq T} \aint_{B_{\rho_0}(x_0)} |a_{ij}-[a_{ij}]_{\rho_0,x_0}| dx \right)^{1/p_1q} \|x_d D_xu\|_{\bL_{p,\theta,\omega}(T)} \nonumber
  \\
  &\leq N \gamma_0^{1/p_1q}\|x_dD_xu\|_{\bL_{p,\theta,\omega}(T)},
\end{align*}
where $q:=p/(p-p_1)$ is the H\"older conjugate of $p/p_1$. By considering the last four terms of \eqref{eq5131514} in a similar way, we have
\begin{align} \label{eq9302343}
  (1+\sqrt{\lambda})\|w\|_{\bL_{p,p_1,\theta,\omega}(T)} \leq N \gamma_0^{1/p_1q}((1+\sqrt{\lambda})\|u\|_{\bL_{p,\theta,\omega}(T)} + \|x_dD_xu\|_{\bL_{p,\theta,\omega}(T)}).
\end{align}
 Note that by the (unweighted) Gagliardo-Nirenberg interpolation inequality in $x$-variable,
\begin{align*}
  \|h\|_{L_{p}(\bR^d)} &\leq N\|D_xh\|_{L_{p}(\bR^d)}^\kappa \|h\|_{L_{p_1}(\bR^d)}^{1-\kappa},
\end{align*}
where 
\begin{equation*}
  \frac{1}{p}=\kappa\left(\frac{1}{p}-\frac{1}{d}\right)+(1-\kappa)\frac{1}{p_1}.
\end{equation*}
Thus, by using \eqref{equivnorm}, for $\zeta \in C_c^\infty(\bR_+)$ satisfying \eqref{eq716908},
\begin{align*}
  &\|w\|_{\bL_{p,\theta,\omega}(T)} \nonumber
  \\
  &= \left( \int_{-\infty}^T \sum_{m=-\infty}^\infty e^{m(\theta+d-1)}\|w(t,e^m\cdot)\zeta\|_{L_p(\bR^d)}^p \omega(t) dt \right)^{1/p} \nonumber
  \\
  &\leq N \left( \int_{-\infty}^T \sum_{m=-\infty}^\infty e^{m(\theta+d-1)}\|w(t,e^m\cdot)\zeta\|_{W_p^1(\bR^d)}^{\kappa p} \|w(t,e^m\cdot)\zeta\|_{L_{p_1}(\bR^d)}^{(1-\kappa) p} \omega(t) dt \right)^{1/p} \nonumber
  \\
  &\leq N\|w\|_{\bH^1_{p,\theta,\omega}(T)}^{\kappa}\|w\|_{\bL_{p,p_1,\theta,\omega}(T)}^{1-\kappa}.
\end{align*}
This and \eqref{eq9302343} yield
\begin{align*}
  &(1+\sqrt{\lambda})\|w\|_{\bL_{p,\theta,\omega}(T)} \nonumber
  \\
  &\leq  \varepsilon (1+\sqrt{\lambda})\|w\|_{\bH_{p,\theta,\omega}^1(T)} + N_{\varepsilon} (1+\sqrt{\lambda})\|w\|_{\bL_{p,p_1,\theta,\omega}(T)} \nonumber
  \\
  &\leq  \varepsilon (1+\sqrt{\lambda})\|w\|_{\bH_{p,\theta,\omega}^1(T)} + N_{\rho_0, \varepsilon} \gamma_0^{1/p_1q}((1+\sqrt{\lambda})\|u\|_{\bL_{p,\theta,\omega}(T)} + \|x_dD_xu\|_{\bL_{p,\theta,\omega}(T)}) \nonumber
  \\
  &\leq (\varepsilon + N_{\rho_0,\varepsilon} \gamma_0^{1/p_1q})((1+\sqrt{\lambda})\|u\|_{\bL_{p,\theta,\omega}(T)} + \|x_dD_xu\|_{\bL_{p,\theta,\omega}(T)}) 
  \\
  &\quad+ \varepsilon (1+\sqrt{\lambda}) \|v\|_{\bH_{p,\theta,\omega}^1(T)}.
\end{align*}
Combining this with \eqref{eq4241018}, we have \eqref{eq4222332}.
Thus, the cases $(i)$ when $(b_i),(\hat{b}_i)$, and $c$ are zero, and $(ii)$ are proved.

It remains to prove $(i)$ where $(b_i),(\hat{b}_i)$, and $c$ are non-zero. Since
  \begin{equation*}
  a_0u_t - x_d^2 D_i(a_{ij}D_{j}u) +\lambda c_0 u = D_i \left(F_i - x_d\hat{b}_iu \right) + \left(f - x_d b_iD_iu + \hat{b}_iu -cu\right),
  \end{equation*}
  by \eqref{eq4151444} and \eqref{eq4222332},
  \begin{align*}
    &(1+\sqrt{\lambda})\|u\|_{\bL_{p,\theta,\omega}(T)}
    \\
    &\leq (\varepsilon + N_{\rho_0,\varepsilon} \gamma_0^{(p-p_1)/pp_1}) ((1+\sqrt{\lambda})\|u\|_{\bL_{p,\theta,\omega}(T)} + \|x_dD_xu\|_{\bL_{p,\theta,\omega}(T)}) \nonumber
    \\
    &\quad + N \left(\|x_d^{-1}F\|_{\bL_{p,\theta,\omega}(T)} + \frac{1}{1+\sqrt{\lambda}}\|f\|_{\bL_{p,\theta,\omega}(T)}\right) \nonumber
    \\
    &\quad + N \left(\|u\|_{\bL_{p,\theta,\omega}(T)} + \frac{1}{1+\sqrt{\lambda}}\|x_dD_xu\|_{\bL_{p,\theta,\omega}(T)}\right) \nonumber
    \\
    &\leq (\varepsilon + N_{\rho_0,\varepsilon} \gamma_0^{(p-p_1)/pp_1}) ((1+\sqrt{\lambda})\|u\|_{\bL_{p,\theta,\omega}(T)} + \|x_dD_xu\|_{\bL_{p,\theta,\omega}(T)}) \nonumber
    \\
    &\quad + N \left(\|x_d^{-1}F\|_{\bL_{p,\theta,\omega}(T)} + \frac{1}{1+\sqrt{\lambda}}\|f\|_{\bL_{p,\theta,\omega}(T)} + \|u\|_{\bL_{p,\theta,\omega}(T)}\right).
  \end{align*}
  Thus, one can choose a sufficiently large $\lambda_0\geq0$, (possibly larger than that in Theorem \ref{thm_simple_div}), to obtain \eqref{eq4222332}.
The lemma is proved.
\end{proof}

The following lemma is taken from \cite[Lemma 5.6]{KL13}.

\begin{lemma} \label{lem716}
  Let $\varepsilon_0>0$. Then there exists $\rho_0=\rho_0(\varepsilon_0)\in(1/2,1)$, and non-negative $\eta_k\in C_c^\infty(\bR^{d}_+)$ such that
  \begin{eqnarray} \label{eq4241508}
    \sum_k \eta_k^p\geq1, \quad \sum_k \eta_k \leq N(d), \quad \sum_k\left( x_d|D_x\eta_k|+ x_d^2|D_x^2\eta_k| \right) \leq \varepsilon_0^p,
  \end{eqnarray}
  and for each $k$, there is a point $x_k\in \bR^{d}_+$ such that $supp\,\eta_k \subset B_{\rho_0 x_{kd}}(x_k)$.
\end{lemma}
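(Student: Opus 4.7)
The plan is to exploit the scale invariance of the balls $B_{\rho x_d}(x)\subset\bR^d_+$: they are preserved under the dilation $x\mapsto\lambda x$, so a scale-invariant partition of unity is natural in the coordinates $(y',y_d)=(x'/x_d,\ln x_d)$, in which such balls become Euclidean balls of essentially unit size. This is the strategy of the Krylov--Lototsky construction in \cite{KL13}, which I would adapt here.

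Concretely, I would fix a non-negative mother bump $\phi\in C_c^\infty((-1,1))$ with the partition-of-unity identity $\sum_{n\in\bZ}\phi(s-n)^p\equiv 1$ on $\bR$, a small parameter $\delta\in(0,1/(2C_d))$ (to be tuned in terms of $\varepsilon_0$), and set $\rho_0:=1-C_d\delta\in(1/2,1)$ for a dimensional constant $C_d$. Indexing $k=(j,\alpha)\in\bZ\times\bZ^{d-1}$, I would place centres at $x_{k,d}:=e^{j\delta}$ and $x_k':=\alpha\,\delta\,x_{k,d}$, and define
\begin{equation*}
\eta_k(x):=\phi\!\left(\delta^{-1}(\ln x_d-j\delta)\right)\prod_{i=1}^{d-1}\phi\!\left(\delta^{-1}(x_i-x_{k,i}')/x_{k,d}\right).
\end{equation*}
On $\mathrm{supp}\,\eta_k$ one has $|\ln(x_d/x_{k,d})|<\delta$ and $|x_i-x_{k,i}'|<\delta\,x_{k,d}$, which places the support inside $B_{C_d\delta\,x_{k,d}}(x_k)\subset B_{\rho_0 x_{k,d}}(x_k)$ for $\delta$ small enough. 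The tensor-product partition identity gives $\sum_k\eta_k^p\equiv 1$ pointwise, and bounded overlap (at most $2^d$ active indices at each point, because $\phi$ is supported in $(-1,1)$) yields $\sum_k\eta_k\leq N(d)$.

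The main obstacle is the third, pointwise derivative bound. A direct chain-rule computation in the coordinates $s_d=\delta^{-1}(\ln x_d-j\delta)$ and $s_i=\delta^{-1}(x_i-x_{k,i}')/x_{k,d}$ gives
\begin{equation*}
x_d\,|D_x\eta_k|\leq C(\phi,d)/\delta,\qquad x_d^2\,|D_x^2\eta_k|\leq C(\phi,d)/\delta^2
\end{equation*}
on the support, so that the pointwise overlap yields $\sum_k\bigl(x_d|D_x\eta_k|+x_d^2|D_x^2\eta_k|\bigr)\leq N(d,\phi)/\delta^2$. The delicate point is that one must choose $\delta=\delta(\varepsilon_0,d,\phi)$ so that this last constant is dominated by $\varepsilon_0^p$ while keeping $\rho_0=1-C_d\delta$ in $(1/2,1)$; this is the precise trade-off handled in \cite[Lemma 5.6]{KL13}. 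The refinement there uses an amplitude rescaling of $\phi$ and additional shifted copies in the tensor product so that the naive power $1/\delta^2$ can be exchanged for a small positive power of $\delta$, and then $\delta=\delta(\varepsilon_0)$ is chosen to deliver all three inequalities simultaneously together with the support condition.
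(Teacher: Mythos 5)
The paper does not prove this lemma; it is quoted directly from \cite[Lemma 5.6]{KL13}, so the only basis for comparison is whether your construction would actually deliver the three bounds in \eqref{eq4241508}. Your skeleton (a dilation-invariant lattice in the coordinates $(x'/x_{kd},\ln x_d)$, tensor-product bumps, bounded overlap) is the right one, but the scaling in your construction is backwards, and this is a fatal gap, not a technicality.

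You place $\mathrm{supp}\,\eta_k$ in a ball of radius $\sim\delta x_{kd}$ \emph{around} $x_k$ and send $\delta\to0$. This can never satisfy the third inequality. Indeed, from $\sum_k\eta_k^p\geq1$ and $\sum_k\eta_k\leq N(d)$ one gets $\max_k\eta_k(x)\geq c_0(d,p)>0$ at every $x$; a smooth function of height $c_0$ supported in $B_{C\delta x_{kd}}(x_k)$ must satisfy $|D\eta_k|\geq c_0/(2C\delta x_{kd})$ somewhere in that ball, and there $x_d\geq(1-C\delta)x_{kd}$, so $x_d|D\eta_k|\gtrsim c_0/\delta$. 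Thus the quantity you need to be $\leq\varepsilon_0^p$ is bounded \emph{below} by a constant times $1/\delta$, and your proposed repair (amplitude rescaling plus extra shifted copies) cannot circumvent this: rescaling amplitudes destroys $\sum\eta_k^p\geq1$, and restoring it with more copies restores the derivative sum, since the lower bound above uses only the two normalization conditions. There is no trade of $1/\delta^2$ for a positive power of $\delta$.

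The actual mechanism is the opposite: the supports must be taken as \emph{large} as the constraint $\mathrm{supp}\,\eta_k\subset B_{\rho_0x_{kd}}(x_k)$ allows, which is exactly why $\rho_0$ must be close to $1$. In the intrinsic metric $|dx|/x_d$ the ball $B_{\rho_0 x_{kd}}(x_k)$ has diameter $\approx\ln\frac{1+\rho_0}{1-\rho_0}\to\infty$ as $\rho_0\uparrow1$. One therefore uses bumps supported in the \emph{lower} part of the ball, say $x_d\in(\mu x_{kd},e^{L}\mu x_{kd})$ with $e^{L}\mu$ small and $\mu>1-\rho_0$, of logarithmic length $L$ in $x_d$ and of width comparable to the admissible radius in $x'$. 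Then $x_d|D_d\eta_k|\lesssim1/L$ because the bump in $t=\ln x_d$ varies over length $L$, and $x_d|D_{x'}\eta_k|\lesssim x_d/(\sqrt{\mu}\,x_{kd})\lesssim e^{L}\sqrt{\mu}$, which is small if $\mu\leq e^{-3L}$; second derivatives are handled the same way, the lattice spacing matched to these widths gives overlap $N(d)$ and $\sum_k\eta_k^p\geq1$, and finally $L=L(\varepsilon_0)$ and hence $\rho_0=\rho_0(\varepsilon_0)$ are chosen. Note in particular that $x_k$ need not lie in $\mathrm{supp}\,\eta_k$; the lemma only requires the inclusion of the support in $B_{\rho_0x_{kd}}(x_k)$.
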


Now we are ready to prove Theorem \ref{thm_para_div}.

\begin{proof}[Proof of Theorem \ref{thm_para_div}]
For $(i)$ and $(ii)$, due to the method of continuity and Theorem \ref{thm_simple_div}, it suffices to prove the a priori estimate \eqref{estdiv} when $u\in C_c^\infty((-\infty,T]\times\bR^d_+)$.

We first deal with the case $q=p$. Let $\lambda_0\geq0$ be taken from Lemma \ref{lem_supp_div}, and $u\in C_c^\infty((-\infty,T]\times\bR^d_+)$ satisfy
\begin{equation*}
  \sL_p u+\lambda c_0 u= D_iF_i+f,
\end{equation*}
where $x_d^{-1}F,f\in \bL_{p,\theta,\omega}(T)$.

  Let $\varepsilon>0$, which will be specified below. Take $\eta_k\in C_c^\infty(\bR^{d+1}_+)$ satisfying \eqref{eq4241508} with $\varepsilon_0\in(0,1)$. Here, for each $k$, there is a point $(t_k,x_k)\in \bR^{d+1}_+$ such that $supp\,\eta_k \subset B_{\rho_0 x_{kd}}(x_k)$. Then $u_k:=u\eta_k$ satisfies
   \begin{align*}
     \sL_p u_k +\lambda c_0 u_k&= f\eta_k - F_i D_i\eta_k - x_d^2 a_{ij}D_ju D_i\eta_k + 2x_da_{dj}uD_j\eta_k + x_d b_i uD_i\eta_k 
     \\
     &\quad + x_d \hat{b}_i u D_i\eta_k +D_i\left(F_i\eta_k - x_d^2 a_{ij}uD_j\eta_k \right).
   \end{align*}
By applying Lemma \ref{lem_supp_div} to $v(t,x)=u_k(t,x_{kd}x)$ with $p_1\in(1,p)$ such that $\alpha p_1<\theta<\beta p_1$,
\begin{align*}
  &(1+\sqrt{\lambda})\|u_k\|_{\bL_{p,\theta,\omega}(T)} 
  \\
  &\leq (\varepsilon + N_{\rho_0,\varepsilon} \gamma_0^{(p-p_1)/pp_1})((1+\sqrt{\lambda})\|u_k\|_{\bL_{p,\theta,\omega}(T)} + \|x_dD_xu_k\|_{\bL_{p,\theta,\omega}(T)} )
  \\
  &\quad + \frac{N}{1+\sqrt{\lambda}} \|f\eta_k\|_{\bL_{p,\theta,\omega}(T)} + N \|x_d^{-1}F\eta_k\|_{\bL_{p,\theta,\omega}(T)} + \frac{N}{1+\sqrt{\lambda}} \|F D_x\eta_k\|_{\bL_{p,\theta,\omega}(T)}
  \\
  &\quad+ N \|x_d u D_x\eta_k\|_{\bL_{p,\theta,\omega}(T)} + \frac{N}{1+\sqrt{\lambda}} \|x_d^2 D_xu D_x\eta_k\|_{\bL_{p,\theta,\omega}(T)}.
\end{align*}
By raising both sides of this inequality to the power of $p$, and summing in $k$, \eqref{eq4241508} yields that
\begin{align*}
  &\|u\|_{\bL_{p,\theta,\omega}(T)} \leq (1+\sqrt{\lambda})\|u\|_{\bL_{p,\theta,\omega}(T)} 
  \\
  &\leq (N\varepsilon + N_{\rho_0,\varepsilon} \gamma_0^{(p-p_1)/pp_1}) ((1+\sqrt{\lambda})\|u\|_{\bL_{p,\theta,\omega}(T)} + \|x_dD_xu\|_{\bL_{p,\theta,\omega}(T)})
  \\
  &\quad +  \frac{N}{1+\sqrt{\lambda}} \|f\|_{\bL_{p,\theta,\omega}(T)} + N \|x_d^{-1}F\|_{\bL_{p,\theta,\omega}(T)} 
  \\
  &\quad + N\varepsilon_0\left( \| u\|_{\bL_{p,\theta,\omega}(T)}+\| x_d D_x u\|_{\bL_{p,\theta,\omega}(T)} \right).
\end{align*}
This together with higher-order estimate \eqref{eq4151444} yields (see also Remark \ref{rem1118})
\begin{align*}
  &(1+\sqrt{\lambda}) \|u\|_{\bL_{p,\theta,\omega}(T)} + \|x_dD_xu\|_{\bL_{p,\theta,\omega}(T)} 
  \\
  &\leq \frac{N}{1+\sqrt{\lambda}} \|f\|_{\bL_{p,\theta,\omega}(T)} + N \|x_d^{-1}F\|_{\bL_{p,\theta,\omega}(T)}
  \\
  &\quad + (N\varepsilon + N_{\rho_0,\varepsilon} \gamma_0^{(p-p_1)/pp_1}) ((1+\sqrt{\lambda})\|u\|_{\bL_{p,\theta,\omega}(T)} + \|x_dD_xu\|_{\bL_{p,\theta,\omega}(T)})
  \\
  &\quad + N\varepsilon_0\left( \| u\|_{\bL_{p,\theta,\omega}(T)}+\| x_d D_x u\|_{\bL_{p,\theta,\omega}(T)} \right).
\end{align*}
Then we first choose $\varepsilon_0$ sufficiently small such that $N\varepsilon_0 <1/3$. Then $\rho_0=\rho_0(\varepsilon_0)$ is determined from Lemma \ref{lem716}. Next we take $\varepsilon$ small enough, and then choose $\gamma_0$ sufficiently small so that $N\varepsilon + N_{\varepsilon} \gamma_0^{(p-p_1)/pp_1}<1/3$. Then we obtain \eqref{estdiv}, which proves the case $p=q$.

Now we treat the case $q\neq p$. Let $u\in C_c^\infty((-\infty,T]\times \bR^d_+)$ satisfy \eqref{maindiv}.
From the above case $q=p$, for any $\omega'\in A_p(\bR)$, if $x_d^{-1}F,f\in \bL_{p,\theta,\omega'}(T)$, then $u$ satisfies \eqref{estdiv} with $(p,\omega')$ instead of $(q,\omega)$. Using this and the extrapolation theorem (see e.g. \cite[Theorem 2.5]{DK18}), we have $u\in \cH_{q,p,\theta,\omega}^1(T)$ satisfying the a priori estimate \eqref{estdiv}. The theorem is proved.
\end{proof}

We finish this section by giving the proof of Theorem \ref{thm_finite_div}.

\begin{proof}[Proof of Theorem \ref{thm_finite_div}]
  
  We follow the idea of the proof of \cite[Theorem 2.1]{KryVMO}.

We first prove the uniqueness by showing the a priori estimate \eqref{eq10021627}. Let $u\in C_c^\infty([0,T]\times\bR^d)$ be a solution to \eqref{div_finite} with $u(0,\cdot)=0$. Due to zero initial condition, if we extend $u,F$, and $f$ to be zero for $t\leq0$, then $u\in \cH_{q,p,\theta,\omega}^1(T)$ and it satisfies \eqref{div_finite} in $(-\infty,T)\times\bR^d_+$.
Let $\bar{\lambda}\geq0$, which will be specified below.
Then we see that $v:=e^{-\bar{\lambda}t}u \in \cH_{q,p,\theta,\omega}^1(T)$ satisfies 
  \begin{equation*}
   \sL_p v +(\lambda c_0+\bar{\lambda}a_0)v =D_i(e^{-\bar{\lambda}t}F_i)+e^{-\bar{\lambda}}tf
 \end{equation*}
 in $\Omega_T$.
 Note that $(a_{ij})$, and $\frac{\lambda c_0+\bar{\lambda}a_0}{\lambda+\bar{\lambda}}$ with $[\frac{\lambda c_0+\bar{\lambda}a_0}{\lambda+\bar{\lambda}}]_{\rho,x_0}=\frac{\lambda [c_0]_{\rho,x_0}+\bar{\lambda}a_0}{\lambda+\bar{\lambda}}$ satisfy Assumption \ref{ass_coeff} $(\rho_0,\gamma_0)$. Thus, by Theorem \ref{thm_para_div} $(i)$, there exist $\rho_0\in(1/2,1)$ sufficiently close to $1$, a sufficiently small number $\gamma_0>0$,
 and a sufficiently large number $\lambda_0\geq0$ such that for any $\bar{\lambda}\geq\lambda_0$, we have
  \begin{align*}
  &(1+\sqrt{\lambda+\bar{\lambda}}) \|v\|_{\bL_{q,p,\theta,\omega}(T)} + \|x_dD_xv\|_{\bL_{q,p,\theta,\omega}(T)}
  \\
  &\leq N\left( \|x_d^{-1}e^{-\bar{\lambda}t}F\|_{\bL_{q,p,\theta,\omega}(T)} + \frac{1}{1+\sqrt{\lambda+\bar{\lambda}}}\|e^{-\bar{\lambda}t}f\|_{\bL_{q,p,\theta,\omega}(T)}\right),
 \end{align*}
 where $N$ is independent of $T$.
  By taking sufficiently large $\bar{\lambda}>0$ so that $\frac{N}{1+\sqrt{\lambda+\bar{\lambda}}}<\frac{1}{2}$, we obtain \eqref{eq10021627} with $(v, e^{-\bar{\lambda}t}F,e^{-\bar{\lambda}t}f)$ in place of $(u,F,f)$.
Thus,
 \begin{align*}
     &(1+\sqrt{\lambda}) \|u\|_{\bL_{q,p,\theta,\omega}(0,T)} + \|x_dD_xu\|_{\bL_{q,p,\theta,\omega}(0,T)}
     \\
     &\leq N(T) \left((1+\sqrt{\lambda+\bar{\lambda}}) \|v\|_{\bL_{q,p,\theta,\omega}(T)} + \|x_dD_xv\|_{\bL_{q,p,\theta,\omega}(T)}\right)
  \\
  &\leq N(T) \left( \|x_d^{-1}e^{-\bar{\lambda}t}F\|_{\bL_{q,p,\theta,\omega}(T)} + \frac{1}{1+\sqrt{\lambda+\bar{\lambda}}}\|e^{-\bar{\lambda}t}f\|_{\bL_{q,p,\theta,\omega}(T)}\right)
  \\
  &\leq N(T) \left( \|x_d^{-1}F\|_{\bL_{q,p,\theta,\omega}(T)} + \frac{1}{1+\sqrt{\lambda+\bar{\lambda}}}\|f\|_{\bL_{q,p,\theta,\omega}(T)}\right),
 \end{align*}
 which proves \eqref{eq10021627}.

 Next, we consider the existence. 
Due to the a priori estimate \eqref{eq10021627}, we only need to prove the existence for a given $\lambda\geq0$ when $F,f\in C_c^\infty((0,T)\times\bR^d_+)$, and $b_i=\hat{b}_i=c=0$. Let us extend $f$ and $F$ by zero for $t\leq0$. By Theorem \ref{thm_para_div}, one can find $\rho_0\in(1/2,1)$, $\gamma_0>0$, and $\lambda_0\geq0$ so that under Assumption \ref{ass_lead} $(\rho_0,\gamma_0)$, for any $\lambda\geq\lambda_0$, there is a solution $u\in \cH_{q,p,\theta,\omega}^1(T)$ to
\begin{equation*}
  a_0 u_t - x_d^2 D_i(a_{ij}D_{j}u) +\lambda c_0 u = D_iF + f
\end{equation*}
 in $(-\infty,T)\times\bR^d_+$.
Since both $f$ and $F$ have compact supports in $t\in(0,T)$, by applying \eqref{estdiv} with $T=\delta$, $u(t,\cdot)=0$ for $t<\delta$ for some $\delta>0$. 
Thus, one can deduce that there is $u_n\in C_c^\infty((-\infty,T]\times\bR^d_+)$ such that $u_n(0,\cdot)=0$, and $u_n\to u$ in $\cH_{q,p,\theta,\omega}^1(T)$. This implies that $u\in \mathring{\cH}_{q,p,\theta,\omega}^1(T)$, and it is a solution to \eqref{div_finite} in $(0,T)\times\bR^d_+$. The theorem is proved.
\end{proof}

\mysection{Elliptic equations} \label{sec_ell}

In this section, we deal with the elliptic equations.
The proof of Theorem \ref{thm_ell} is divided into two main parts: $(i)$-$(ii)$ and $(iii)$. For $(i)$-$(ii)$, we will use Theorem \ref{thm_para_div} and repeat the argument presented in Lemma \ref{lem4192205}. To prove $(iii)$, which deals with the special case when $d=1$, we first show the following solvability result.
In this case, we denote $a,b$, and $\hat{b}$, instead of $(a_{ij}), (b_i)$, and $(\hat{b}_i)$, respectively.

\begin{lemma} \label{lem1010953}
Let $p\in(1,\infty)$. Suppose that $a$ is constant, and the coefficients $a, b, \hat{b}$, and $c$ satisfy the ratio condition
      \begin{equation} \label{eq1022135}
    \frac{b}{a}=n_b, \quad \frac{\hat{b}}{a}=n_{\hat{b}}, \quad \frac{c}{a}=n_c
  \end{equation}
  for some $n_b,n_{\hat{b}}, n_c\in \bR$.
  Assume that the quadratic equation 
\begin{equation*}
 z^2+(1+n_b+n_{\hat{b}})z-n_c=0
\end{equation*}
has two distinct real roots $\alpha <\beta$.
Then for any $\theta\in \bR\setminus [\alpha p, \beta p]$, and $x^{-1}F,f\in L_{p,\theta}$, there is a unique solution $u\in H_{p,\theta}^1$ to
 \begin{equation} \label{eq10101330}
-x^2 D_{x}(aD_xu) + x bD_xu + xD_x(\hat{b}u) +cu = D_xF+ f.
\end{equation}
Moreover, for this solution, we have
\begin{eqnarray} \label{eq10101454}
  \|u\|_{L_{p,\theta}} + \|x D_x u\|_{L_{p,\theta}} \leq N \left(\|x^{-1}F\|_{L_{p,\theta}} + \|f\|_{L_{p,\theta}} \right),
\end{eqnarray}
where $N=N(p,\theta,n_b,n_{\hat{b}},n_c,K)$.
\end{lemma}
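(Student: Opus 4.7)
The plan is to exploit the fact that in one dimension with constant coefficients, the operator factors as a product of two first-order Euler operators whose inverses on $L_{p,\theta}$ are furnished by weighted Hardy's inequality. First I would divide the equation by the constant $a$ to reduce to $a=1$; since $\hat b$ is constant, $xD_x(\hat b u)=\hat b\,xD_xu$, so the PDE reads
\begin{equation*}
-x^2u''+(b+\hat b)xu'+cu=D_xF+f.
\end{equation*}
Setting $P:=xD_x$ and using $x^2D_x^2=P^2-P$, the left-hand side equals $-P^2u+(1+b+\hat b)Pu+cu$; because $\alpha,\beta$ are the roots of $z^2+(1+n_b+n_{\hat b})z-n_c=0$, Vieta yields the factorization $L=-(P+\alpha)(P+\beta)$. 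A direct computation also gives the distributional identity $D_xF=(P+1)(x^{-1}F)$, so the equation reads
\begin{equation*}
-(P+\alpha)(P+\beta)u=(P+1)(x^{-1}F)+f.
\end{equation*}

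Next I would invert each factor $P+r$ on $L_{p,\theta}$. The ODE $(P+r)w=g$ is equivalent to $(x^rw)'=x^{r-1}g$, so the natural inverse is
\begin{equation*}
(P+r)^{-1}g(x)=\begin{cases} x^{-r}\int_0^x y^{r-1}g(y)\,dy,& \theta<rp,\\ -x^{-r}\int_x^\infty y^{r-1}g(y)\,dy,& \theta>rp,\end{cases}
\end{equation*}
and the classical weighted Hardy inequality (a companion to Lemma \ref{lem_Hardy}) gives $\|(P+r)^{-1}g\|_{L_{p,\theta}}\le \frac{p}{|rp-\theta|}\|g\|_{L_{p,\theta}}$. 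The hypothesis $\theta\in\bR\setminus[\alpha p,\beta p]$ guarantees that $\theta-\alpha p$ and $\theta-\beta p$ share a sign, so the same integration direction applies to both factors and the composition $(P+\beta)^{-1}(P+\alpha)^{-1}$ acts boundedly on $L_{p,\theta}$. Using the operator identity $(P+\alpha)^{-1}(P+1)=I+(1-\alpha)(P+\alpha)^{-1}$, the map $F\mapsto (P+\alpha)^{-1}(P+1)(x^{-1}F)$ is a well-defined bounded operator from $\{F:x^{-1}F\in L_{p,\theta}\}$ to $L_{p,\theta}$, which bypasses any need to differentiate $F$ pointwise.

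I would then define
\begin{equation*}
u:=-(P+\beta)^{-1}\bigl[\bigl(I+(1-\alpha)(P+\alpha)^{-1}\bigr)(x^{-1}F)+(P+\alpha)^{-1}f\bigr],
\end{equation*}
verify that $\|u\|_{L_{p,\theta}}\lesssim\|x^{-1}F\|_{L_{p,\theta}}+\|f\|_{L_{p,\theta}}$, and use $xD_xu=Pu=(P+\beta)u-\beta u$ together with the fact that $(P+\beta)u\in L_{p,\theta}$ satisfies the same bound to obtain \eqref{eq10101454}. The verification that this $u$ is a weak solution amounts to integration by parts against a test function, which in particular confirms $(P+1)(x^{-1}F)=D_xF$ in the weak sense. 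Uniqueness is immediate: any homogeneous $H_{p,\theta}^1$ solution is a linear combination of $x^{-\alpha}$ and $x^{-\beta}$, and $\int_0^\infty x^{\theta-1-rp}\,dx=\infty$ for every $r\in\bR$, so the only such combination lying in $L_{p,\theta}$ is zero. The main delicacy I anticipate is this weak-formulation check when $F$ is only in a weighted Lebesgue space; it can alternatively be handled by density of smooth compactly supported data in the appropriate norms and passing to the limit in the a priori bound \eqref{eq10101454}.
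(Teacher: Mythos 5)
Your proposal is correct and is essentially the paper's own argument in a different notation: the composition $(P+\beta)^{-1}(P+\alpha)^{-1}$ (by partial fractions, $\frac{1}{\beta-\alpha}[(P+\alpha)^{-1}-(P+\beta)^{-1}]$) is exactly the paper's explicit solution $A_1(x)x^{-\alpha}+A_2(x)x^{-\beta}$, your choice of integration endpoint according to the common sign of $\theta-\alpha p$ and $\theta-\beta p$ matches the paper's choice of the constants $B_1,B_2$ in the two regimes $\theta<\alpha p$ and $\theta>\beta p$, and both arguments rest on the one-dimensional weighted Hardy inequality and conclude uniqueness from the explicit homogeneous solutions $x^{-\alpha},x^{-\beta}$ not lying in $L_{p,\theta}$.
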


\begin{proof}
  Since the coefficients satisfy \eqref{ratio}, $b, \hat{b}$, and $c$ are constants. Moreover, we can assume that $\hat{b}=0$.
  Let $0<r<s$ such that $supp(u)\subset(r,s)$. Take a cut-off function $\eta \in C_c^\infty(\bR_+)$ so that $\eta=1$ on $(r,s)$, $\eta=0$ on $(0,r/2)\cup (2s,\infty)$,  and $|xD_x\eta|\leq N$. Then, $v:=u\eta$ satisfies
  \begin{equation*}
    -x^2 aD_x^2v + x bD_xv + xD_x(\hat{b}v) +cv = D_x(F\eta) +\left(f\eta- FD_x\eta\right).
  \end{equation*}
  Since $|FD_x\eta|\leq N|x^{-1}F|$, we can also assume that both $F$ and $f$ have compact supports in $\bR_+$.

  Notice that \eqref{eq10101330} is an ordinary differential equation, which implies that the general solution of this equation is given by
\begin{equation} \label{eq10101418}
  u(x)=(A_1(x)+B_1)x^{-\alpha} + (A_2(x)+B_2)x^{-\beta},
\end{equation}
where $B_1$ and $B_2$ are arbitrary constants,
\begin{align*}
  A_1(x) &:= -\frac{1}{a(\beta-\alpha)} \int_0^x y^{\alpha-1} (D_yF+f) dy 
  \\
  &= -\frac{1}{a(\beta-\alpha)} \int_0^x \left(-\frac{y^{\alpha-2}}{\alpha-1} F + y^{\alpha-1}f\right) dy,
\end{align*}
and
\begin{align*}
  A_2(x) &:= \frac{1}{a(\beta-\alpha)} \int_0^x y^{\beta-1} (D_yF+f) dy 
  \\
  &= \frac{1}{a(\beta-\alpha)} \int_0^x \left(-\frac{y^{\beta-2}}{\beta-1} F + y^{\beta-1}f\right) dy
\end{align*}
(see e.g. \cite[Theorem 3.6.1]{BDM21}). 

Let us consider the range $\theta<\alpha p$. In this case, if we put $B_1=B_2=0$ in \eqref{eq10101418}, then by Hardy’s inequality (see e.g. Theorem 5.1 in the preface of \cite{K85}) 
\begin{align*}
  \|u\|_{L_{p,\theta}} \leq \|A_1\|_{L_{p,\theta-\alpha p}} + \|A_2\|_{L_{p,\theta-\beta p}} \leq N \left(\|x^{-1}F\|_{L_{p,\theta}} + \|f\|_{L_{p,\theta}} \right).
\end{align*}
Thus, we have a solution satisfying \eqref{eq10101454}.

For the case $\theta>\beta p$, we put
\begin{align*}
  B_1&=\frac{1}{a(\beta-\alpha)} \int_0^\infty \left(-\frac{y^{\alpha-2}}{\alpha-1} F + y^{\alpha-1}f\right) dy,
  \\
  B_2&=\frac{1}{a(\beta-\alpha)} \int_0^\infty \left(\frac{y^{\beta-2}}{\beta-1} F - y^{\beta-1}f\right) dy.
\end{align*}
Then again by Hardy's inequality, we obtain \eqref{eq10101454}.

For the uniqueness result, we assume that there is a solution $u\in H_{p,\theta}^1$ to \eqref{eq10101330} with $F=f=0$. Since $a$ is constant, $\hat{b}$ is also constant. Thus, by absorbing $x D_x(\hat{b}u)$ to $xbD_xu$, and dividing \eqref{eq10101330} by $a$, we may assume that $\hat{b}=0$, and $a=1$. Moreover, as in \eqref{eq4131657}, by considering $x^\gamma u$ instead of $u$, we can also assume that $c=0$.
Then we have
\begin{equation*}
  x^2D_x(D_xu)-bxD_xu=0,
\end{equation*}
which is equivalent to
\begin{equation*}
  D_x(x^{-b}D_xu)=0.
\end{equation*}
Thus, we deduce that
\begin{equation*}
  u(x)=C_1x^{-\alpha} + C_2x^{-\beta}
\end{equation*}
for some $C_1,C_2\in\bR$. Since $u\in H_{p,\theta}^1$, we have $C_1=C_2=0$, which yields $u=0$.
The lemma is proved.
\end{proof}

\begin{proof}[Proof of Theorem \ref{thm_ell}]

$(i)$ and $(ii)$. First, we prove the a priori estimate \eqref{eq10081100} for the cases $(i)$ and $(ii)$, by following proof of \cite[Theorem 2.6]{KryVMO}. Let $\eta \in C_c^\infty(\bR)$ and $u\in C_c^\infty(\bR^d_+)$. Then $v(t,x):=\eta_n(t)u(x):=\eta(t/n)u(x)$ satisfies 
    \begin{equation*}
      \sL_p v+\lambda c_0 v =D_i(\eta_n F_i) + \eta_n f + \eta_n' u
    \end{equation*}
    in $\bR\times \bR^d_+$.
    Note that for $g\in L_{p,\theta}$,
\begin{align*}
  \|\eta_n g\|_{\bL_{p,\theta}(\infty)}^p=nN_1\|g\|_{L_{p,\theta}}^p, \quad  \|\eta_n' g\|_{\bL_{p,\theta}(\infty)}^p=n^{1-p}N_2\|g\|_{L_{p,\theta}}^p,
\end{align*}
where
\begin{equation*}
  N_1:=\int_0^\infty |\eta|^p \,dt, \quad N_2:=\int_0^\infty |\eta'|^p \,dt.
\end{equation*}
Thus, if we apply \eqref{estdiv} with the case $p=q$ and $\omega=1$, then for $\lambda\geq\lambda_0$,
\begin{align} \label{eq10091455}
  &(1+\sqrt{\lambda}) \|u\|_{L_{p,\theta}} + \|x_dD_xu\|_{L_{p,\theta}} \nonumber
  \\
  &\leq N \left( \|x_d^{-1}F\|_{L_{p,\theta}} + \frac{1}{1+\sqrt{\lambda}}\|f\|_{L_{p,\theta}} + \frac{n^{-1}}{1+\sqrt{\lambda}}\|u\|_{L_{p,\theta}} \right),
\end{align}
where $\lambda_0$ is taken from Theorem \ref{thm_para_div}.
Letting $n\to\infty$, \eqref{eq10081100} is obtained.

Next, we prove the existence. Due to the method of continuity, we may assume that the coefficients are constants. By \cite[Theorem 8.6]{DK11}, there is $\Lambda\geq0$ such that for any $\lambda\geq\Lambda$, we can find a solution $u_k\in W_p^1(B_k)$ to 
\begin{align*}
  \sL_eu +\lambda c_0 u=D_iF_i + f
\end{align*}
in $B_k:=\bR^{d-1}\times (2^{-k},2^k)$. Here, as mentioned in the proof of Lemma \ref{lem4192205}, the result from \cite{DK11} still holds true for a measurable coefficient $K^{-1}\leq c_0\leq K$.  For $\eta \in C_c^\infty(\bR)$, we denote $v_{k,n}(t,x):=\eta_n(t)u_k(x):=\eta(t/n)u_k(x)$. Then $v_{k,n} \in \mathring{\cH}_p^1(\tilde{A}_k)$, where $\tilde{A}_k:=\bR\times \bR^{d-1}\times (2^{-k},2^k)$. Moreover, it satisfies 
\begin{align*}
 \sL_p v_{k,n} +\lambda c_0 v_{k,n}=D_i(\eta_nF_i) + \eta_nf + \eta_n' u_k
\end{align*}
in $\tilde{A}_k$. As in the proof of Lemma \ref{lem4192205}, by following the proof of Lemma \ref{lem_zero_div}, and using \cite[Theorem 7.2]{DK18}, if we extend $v_{k,n}$ to be zero in $(\bR\times \bR^d_+)\setminus \tilde{A}_k$, then we have \eqref{eq10091455} with $v_{k,n}$ in place of $u$.
By letting $n\to\infty$, we conclude that $u_k$ is a bounded sequence in $H_{p,\theta}^1$. Hence, there is a subsequence still denoted by $u_k$ so that $u_k \rightharpoonup u$ (weakly) in $H_{p,\theta}^1$, which proves the desired result when $\lambda\geq \max\{\Lambda,\lambda_0\}$. Again by the method of continuity, we actually obtain the existence for $\lambda\geq\lambda_0$ excluding the condition $\lambda\geq\Lambda$.

$(iii)$ Next, we consider the case $(iii)$. Due to Lemma \ref{lem1010953}, we have the desired result when the coefficients $a, b, \hat{b}$, and $c$ satisfy the ratio condition \eqref{eq1022135}, and $a$ is constant. To deal with general coefficients, one just needs to repeat the proofs of Lemma \ref{lem_supp_div} and Theorem \ref{thm_para_div}, using the corresponding results for elliptic equations instead of parabolic ones.
The proof is completed.
\end{proof}

\appendix

\mysection{A ``crawling of ink spots'' lemma} \label{sec_ink}

We first present some properties of Muckenhoupt weights. Recall the definitions of $\cC_R(t)$ and $\widehat{\cC}_R(t)$ in \eqref{eq5241558}.

\begin{prop}
Let $p\in(1,\infty)$, and $\omega\in A_p(\bR)$ such that $[\omega]_{A_p(\bR)} \leq K_0$.

(i) The measure $\omega(x) dx$ is a doubling measure; for any $t\in \bR$ and $R>1$,
\begin{equation*} 
  \omega(\cC_R(t)) \leq R^p [\omega]_{A_p(\bR)}\omega(\cC_1(t)).
\end{equation*}

(ii) Let $E\subset C_R(t)$ for some $t\in\bR$ and $R>0$. Then there exist $N=N(K_0)>0$ and $\delta=\delta(K_0)\in(0,1)$ such that
\begin{equation*} \label{eq5272053}
  N^{-1}\left(\frac{|E|}{|\cC_R(t)|}\right)^p \leq \frac{\omega(E)}{\omega(\cC_R(t))} \leq N\left(\frac{|E|}{|\cC_R(t)|}\right)^{\delta}.
\end{equation*}
\end{prop}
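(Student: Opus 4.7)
For part (i), my plan is to combine Hölder's inequality with the $A_p$ condition evaluated on the larger interval. Since $\cC_1(t)\subset\cC_R(t)$, I would split $1=\omega^{1/p}\omega^{-1/p}$ and apply Hölder's inequality with exponent $p$ to obtain
\begin{equation*}
  |\cC_1(t)|^p\leq\omega(\cC_1(t))\left(\int_{\cC_1(t)}\omega^{1/(1-p)}\,ds\right)^{p-1}\leq\omega(\cC_1(t))\left(\int_{\cC_R(t)}\omega^{1/(1-p)}\,ds\right)^{p-1},
\end{equation*}
the last step using monotonicity of the measure in the domain of integration. Applying the $A_p$ condition on $\cC_R(t)$ bounds the final factor above by $[\omega]_{A_p}|\cC_R(t)|^p/\omega(\cC_R(t))$, and rearranging yields $\omega(\cC_R(t))\leq R^p[\omega]_{A_p}\omega(\cC_1(t))$, which is (i).

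For the left inequality in (ii), I would run exactly the same Hölder estimate but with $\cC_1(t)$ replaced by a general subset $E\subset\cC_R(t)$. The identical chain of inequalities produces
\begin{equation*}
  |E|^p\leq\omega(E)\cdot\frac{[\omega]_{A_p}|\cC_R(t)|^p}{\omega(\cC_R(t))},
\end{equation*}
which rearranges into the desired lower bound with $N^{-1}=[\omega]_{A_p}^{-1}$.

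The right inequality is the substantive part and requires the self-improving property of Muckenhoupt weights. Specifically, I would invoke the reverse Hölder inequality: there exist $s=s(K_0)>1$ and $N=N(K_0)$ such that for every interval $I$,
\begin{equation*}
  \left(\aint_I\omega^s\right)^{1/s}\leq N\aint_I\omega.
\end{equation*}
Applying Hölder's inequality with exponents $(s,s/(s-1))$ to the product $\omega\cdot\mathbf{1}_E$ and then using the reverse Hölder bound on $\cC_R(t)$ gives
\begin{equation*}
  \omega(E)\leq\left(\int_{\cC_R(t)}\omega^s\right)^{1/s}|E|^{(s-1)/s}\leq N\,\omega(\cC_R(t))\left(\frac{|E|}{|\cC_R(t)|}\right)^{(s-1)/s},
\end{equation*}
yielding the conclusion with $\delta=(s-1)/s\in(0,1)$.

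The main (and only) nontrivial ingredient is the reverse Hölder inequality for $A_p$ weights, which is a classical theorem of Coifman–Fefferman whose constants depend only on $p$ and $[\omega]_{A_p}\leq K_0$. Everything else is a short manipulation of Hölder's inequality, so once the reverse Hölder step is cited, the proof is essentially routine.
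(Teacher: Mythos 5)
Your argument is correct and is exactly the standard proof of these facts: the paper itself does not prove the proposition but cites \cite[Propositions 7.1.5, 7.2.8]{G14}, where the lower bound in (ii) (and hence (i), by taking $E=\cC_1(t)$) is obtained by the same H\"older-plus-$A_p$ computation and the upper bound by the same reverse H\"older argument you give. The only ingredient you cite without proof, the reverse H\"older inequality with constants controlled by $[\omega]_{A_p}\leq K_0$, is precisely the classical input the reference supplies, so nothing is missing.
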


\begin{proof}
See \cite[Propositions 7.1.5, 7.2.8]{G14}.
\end{proof}

\begin{lemma} \label{lemcrawl}
Let $\gamma \in (0,1)$ and $E \subset F \subset (-\infty,T)$. Suppose that $|E| < \infty$, and for any $t \in (-\infty,T]$ and $R \in (0,\infty)$ with
\begin{equation*}
  \left| \cC_R(t) \cap E \right| \geq \gamma |\cC_R(t)|,
\end{equation*}
we have
\begin{equation*}
  \widehat{\cC}_R(t) \subset F.
\end{equation*}
Then we have
\begin{equation*} \label{eq5272055}
\omega(E) \leq N \gamma^{\delta} \omega(F),
\end{equation*}
where $\delta>0$ and $N>0$ are constants depending only on $K_0$.
\end{lemma}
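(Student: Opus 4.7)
The plan is a standard Vitali-type covering argument adapted to the half-line $(-\infty,T]$, combined with the $A_p$-properties recorded in the preceding proposition. For each $t\in E$ I set
$$R(t):=\sup\{R>0:|\cC_R(t)\cap E|\geq\gamma|\cC_R(t)|\}.$$
By the Lebesgue differentiation theorem, $R(t)>0$ at almost every $t\in E$, and since $|E|<\infty$ the density $|\cC_R(t)\cap E|/(2R)$ tends to $0$ as $R\to\infty$, giving $R(t)\leq|E|/(2\gamma)<\infty$. Continuity of $R\mapsto|\cC_R(t)\cap E|$ shows that $|\cC_{R(t)}(t)\cap E|\geq\gamma|\cC_{R(t)}(t)|$, while for every $R>R(t)$ the reverse strict inequality holds.

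Since the radii are uniformly bounded, I then apply the Vitali covering lemma to extract a countable disjoint subcollection $\{\cC_{R_i}(t_i)\}_i$ (with $t_i\in E$ and $R_i=R(t_i)$) such that $\bigcup_i 5\cC_{R_i}(t_i)$ covers $E$ up to a Lebesgue-null set. Two consequences are key. First, the hypothesis of the lemma applied to each pair $(t_i,R_i)$ yields $\widehat{\cC}_{R_i}(t_i)\subset F$. Second, the maximality of $R_i$ yields the density bound $|5\cC_{R_i}(t_i)\cap E|\leq\gamma|5\cC_{R_i}(t_i)|$.

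To convert the Lebesgue estimates to weighted ones, I will use the preceding proposition three times: $A_p$-doubling gives $\omega(5\cC_{R_i}(t_i))\leq N\omega(\cC_{R_i}(t_i))$; the upper $A_p$-estimate combined with the density bound from the Vitali step gives $\omega(5\cC_{R_i}(t_i)\cap E)\leq N\gamma^{\delta}\omega(5\cC_{R_i}(t_i))$; and the lower $A_p$-estimate applied to $\widehat{\cC}_{R_i}(t_i)\subset\cC_{R_i}(t_i)$ gives $\omega(\cC_{R_i}(t_i))\leq N\omega(\widehat{\cC}_{R_i}(t_i))$. Chaining these, and using $\widehat{\cC}_{R_i}(t_i)\subset F$ together with the disjointness of $\{\cC_{R_i}(t_i)\}$,
$$\omega(E)\leq\sum_i\omega(5\cC_{R_i}(t_i)\cap E)\leq N\gamma^\delta\sum_i\omega(\widehat{\cC}_{R_i}(t_i))\leq N\gamma^\delta\omega(F),$$
which is the claim. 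The main obstacle to watch is the last of the three weighted inequalities: one must ensure that the truncation $\widehat{\cC}_{R_i}(t_i)=\cC_{R_i}(t_i)\cap(-\infty,T]$ retains a weight comparable to that of the full ball $\cC_{R_i}(t_i)$. This works precisely because $t_i\in E\subset(-\infty,T)$ forces $(t_i-R_i,t_i]\subset\widehat{\cC}_{R_i}(t_i)$, so the truncated interval occupies at least half of the original ball, and the lower bound in part (ii) of the preceding proposition becomes applicable with the resulting $2^{-p}$ absorbed into the constant $N$.
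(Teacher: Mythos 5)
Your proposal is correct and follows essentially the same route as the paper's proof: a stopping-time choice of maximal radii at which the density of $E$ is at least $\gamma$, a disjoint Vitali subfamily whose $5$-fold dilates cover $E$ up to a null set, the density drop on the dilates converted to a weighted bound via the $A_p$ estimates, and the half-interval comparison $\omega(\cC_{R_i}(t_i))\leq N\omega(\widehat{\cC}_{R_i}(t_i))$ to land inside $F$. The only difference is cosmetic — you invoke the Vitali covering lemma where the paper carries out the greedy selection explicitly.
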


\begin{proof}
Let $t\in E$, and denote
\begin{equation*}
\varphi_{t}(r) := \frac{|E \cap \cC_r(t)|}{|\cC_r(t)|}.  
\end{equation*}
Then, $\varphi_t(r) \leq \frac{|E|}{|\cC_r(t)|} \to 0$ as $r \to \infty$.
On the other hand, by the Lebesgue differentiation theorem, there is a null set $N_0$ such that
\begin{equation*}
\lim_{r \to 0} \varphi_{t}(r) = 1
\end{equation*}
for any $t\in \widetilde{E}:= E\setminus N_0$.
Since $\gamma \in (0,1)$, and $\varphi_{t}(r)$ is continuous on $(0,\infty)$, for any $t \in \widetilde{E}$, there is $r \in (0,\infty)$ such that
\begin{equation*}
  \varphi_{t}(r) = \gamma.
\end{equation*}
Since $|E| < \infty$, if we define
\begin{equation*}
  R(t) := \sup\{ r \in (0,\infty): \varphi_{t}(r) = \gamma\}, \quad t\in \widetilde{E},
\end{equation*}
then $R(t)$ is uniformly bounded.
We set
\begin{equation*}
  \Gamma_1 := \{\cC_{R(t)}(t): t \in \widetilde{E}, \,\, R(t) < \infty\}.
\end{equation*}
and
\begin{equation} \label{eq5272233}
  R^*_1 := \sup \{R(t): \cC_{R(t)}(t) \in \Gamma_1\}.
\end{equation}
Then $R^*_1<\infty$ and
\begin{equation} \label{eq5281427}
  \widetilde{E} \subset \bigcup_{\cC_{R(t)}(t) \in \Gamma_1} \cC_{R(t)}(t).
\end{equation}

Now we choose a countable sub-collection $\Gamma_0$ of $\Gamma_1$ as follows.
Using \eqref{eq5272233}, we can take $\cC_{R_1}(t_1):=\cC_{R(t_1)}(t_1)$ from $\Gamma_1$ such that $R_1 > R^*_1/2$. Let $\Gamma_2$ be a sub-collection of $\Gamma_1$ whose elements are disjoint from $\cC_{R_1}(t_1)$. Then we denote $\Gamma_2'=\Gamma_1\setminus \Gamma_2$. Here, we note that for $\cC_{R(t)}(t) \in \Gamma_2'$, 
\begin{equation*}
  \cC_{R(t)}(t) \cap \cC_{R_1}(t_1) \neq \emptyset,
\end{equation*}
and
\begin{equation} \label{eq5281418}
  \cC_{R(t)}(t) \subset \cC_{5R_1}(t_1).
\end{equation}

Now we describe the process to choose $\Gamma_k$ for $k\geq3$. Assume that $\cC_{R_k}(t_k)$ and $\Gamma_{k+1}$ are chosen.
If $\Gamma_{k+1}$ is empty, the process ends.
If not, we take $\cC_{R_{k+1}}(t_{k+1})\in \Gamma_{k+1}$ such that $R_{k+1} > \frac{1}{2} R^*_{k+1}$, where
\begin{equation*}
  R^*_{k+1}:= \sup_{\cC_{R(t)}(t) \in \Gamma_{k+1}} R(t).
\end{equation*}
Then, we split $\Gamma_{k+1} = \Gamma_{k+2} \cup \Gamma_{k+2}'$, where $\Gamma_{k+2}$ consists of $\cC_{R(t)}(t)\in \Gamma_{k+1}$ such that $\cC_{R(t)}(t)\cap \cC_{R_{k+1}}(t_{k+1})=\emptyset$, and $\Gamma_{k+2}'=\Gamma_{k+1}\setminus\Gamma_{k+2}$.
Now we define
\begin{equation*}
  \Gamma_0 := \{\cC_{R_k}(t_k) : k \in \bN \}.
\end{equation*}

We claim that there are only two cases where $\Gamma_0$ contains only finitely many elements or has infinitely many elements with $R_k^* \downarrow 0$. Assume that there exists a number $\varepsilon_0 > 0$ such that $R_k^* \geq \varepsilon_0$ for all $k\in\bN$, which leads to $\sum_{k=1}^\infty |\cC_{R_k}(t_k)|=\infty$. This and $|\cC_{R_k}(t_k)\cap E|=\gamma |\cC_{R_k}(t_k)|$ imply that $|E|=\infty$, which is a contradiction. Thus, the claim is proved.

Our next goal is to prove
\begin{equation} \label{eq5272332}
\Gamma_1 = \bigcup_{k=2}^\infty \Gamma_k'.
\end{equation}
Since the case when $\Gamma_0$ contains only finitely many elements is obvious, we only consider the case when $R_k^* \downarrow 0$. Let us take $\cC_{R(t)}(t) \in \Gamma_1$ such that
\begin{equation*}
  \cC_{R(t)}(t) \notin \bigcup_{k=2}^\infty \Gamma_k'.
\end{equation*}
Then $\cC_{R(t)}(t) \in \Gamma_k$ for all $k\in\bN$. However, due to the definition of $R_k^*$, we have $R(t) = 0$, which contradicts $R(t) > 0$. Thus, \eqref{eq5272332} also holds when $R_k^* \downarrow 0$.

Note that as in \eqref{eq5281418},
\begin{equation*}
  \cC_{R(t)}(t) \subset \cC_{5 R_k(t_k)}
\end{equation*}
for any $\cC_{R(t)}(t) \in \Gamma_{k+1}'$ with $k\in\bN$. This, \eqref{eq5281427} and \eqref{eq5272332} yield that
\begin{equation*}
\widetilde{E} \subset \bigcup_{\cC_{R(t)}(t) \in \Gamma_1} \cC_{R(t)}(t) \subset \bigcup_{k=1}^\infty \cC_{5R_k}(t_k).
\end{equation*}
Note that due to $\cC_{R_k}(t_k)\in \Gamma_1$, for $k\in\bN$,
\begin{equation*}
  |\cC_{R_k}(t_k)\cap E|=\gamma|\cC_{R_k}(t_k)|, \quad |\cC_{5R_k}(t_k)\cap E|<\gamma|\cC_{5R_k}(t_k)|,
\end{equation*}
which imply that 
\begin{equation*}
  \widehat{\cC}_{R_k}(t_k) \subset F,
\end{equation*}
and\begin{equation*}
  \omega(\cC_{5R_k}(t_k)\cap E) \leq \gamma^\delta \omega(\cC_{5R_k}(t_k)),
\end{equation*}
where $\delta=\delta(K_0)$ is taken from \eqref{eq5272053}. Since $\cC_{R_k}(t_k)$ are disjoint,
\begin{align*}
\omega(E) &= \omega(\widetilde{E}) \leq \omega\left( \bigcup_{k=1}^\infty \widetilde{E} \cap \cC_{5R_k}(t_k) \right) \leq \sum_{k=1}^\infty \omega(\widetilde{E} \cap \cC_{5R_k}(t_k))
\\
&\leq \gamma^\delta \sum_{k=1}^\infty \omega(\cC_{5R_k}(t_k)) \leq 5^p[\omega]_{A_p(\bR)} \gamma^\delta \sum_{k=1}^\infty \omega(\cC_{R_k}(t_k))  
\\
&= 5^p [\omega]_{A_p(\bR)} \gamma^\delta \omega\left(\bigcup_{k=1}^\infty \cC_{R_k}(t_k)\right) \leq 10^p [\omega]_{A_p(\bR)}^2 \gamma^\delta\omega \left( \bigcup_{k=1}^\infty \widehat{\cC}_{R_k}(t_k)\right) 
\\
&\leq N(K_0) \gamma^\delta \omega(F).
\end{align*}
Hence, we have \eqref{eq5272055}.
The lemma is proved.
\end{proof}





\end{document}